\numberwithin{equation}{section}
\newtheorem{theorem}{Theorem}[section]
\newtheorem{prop}[theorem]{Proposition}
\newtheorem{lemma}[theorem]{Lemma}
\newtheorem{corollary}[theorem]{Corollary}
\theoremstyle{definition}
\newtheorem{remark}[theorem]{Remark}
\newcommand{\E}{\mathbf{E}}
\newcommand{\R}{\mathbb{R}}
\newcommand{\N}{\mathbb{N}}
\newcommand{\A}{\mathcal{A}}
\newcommand{\eps}{\varepsilon}
\newcommand{\Prob}[1]{\mathbf P\{#1\}}
\newcommand{\qmq}[1]{\quad \mbox{#1} \quad}
\newcommand{\qm}[1]{\quad \mbox{#1}}
\newcommand{\bdot}{\boldsymbol{\cdot}}
\begin{document}

\title[Dickman Approximation in the Kolmogorov distance]{Dickman Approximation of weighted random sums in the Kolmogorov distance}

\author[C.\ Bhattacharjee]{Chinmoy Bhattacharjee}
\address{Department of Mathematics, University Luxembourg, Maison du Nombre, 6 avenue de la Fonte, 4364 Esch-sue-Alzette, Luxembourg}
\email{chinmoy.bhattacharjee@uni.lu}
\author[M.\ Schulte]{Matthias Schulte}
\address{Hamburg University of Technology, Institute of Mathematics, Am Schwarzenberg-Campus 3, 
	21073 Hamburg, Germany}
\email{matthias.schulte@tuhh.de}

\subjclass[2010]{Primary: 60F05, Secondary: 60G50}
\keywords{Dickman distribution, weighted Bernoulli sums, Stein's method, Kolmogorov distance}

\begin{abstract}
	We consider distributional approximation by generalized Dickman distributions, which appear in number theory, perpetuities, logarithmic combinatorial structures and many other areas. We prove bounds in the Kolmogorov distance for the approximation of certain weighted sums of Bernoulli and Poisson random variables by members of this family. While such results have previously been shown in Bhattacharjee and Goldstein (2019) for distances based on smoother test functions and for a special case of the random variables considered in this paper, results in the Kolmogorov distance are new. We also establish optimality of our rates of convergence by deriving lower bounds. As a result, some interesting phase transitions emerge depending on the choice of the underlying parameters. The proofs of our results mainly rely on the use of Stein's method. In particular, we study the solutions of the Stein equation corresponding to the test functions associated to the Kolmogorov distance, and establish their smoothness properties. As applications, we study the runtime of the Quickselect algorithm and the weighted depth in randomly grown simple increasing trees.
\end{abstract}

\maketitle

\section{Introduction and main results}
The Dickman distribution with parameter $\theta>0$ can be described as the unique non-negative fixed point of the distributional transformation $W \mapsto W^*$ given by
\begin{equation}\label{def:Dickman}
W^*=_d U^{1/\theta}(W+1)
\end{equation}
where $U \sim\mathbb{U}[0,1]$ is a uniform random variable on the interval $[0,1]$  independent of $W$, with $=_d$ denoting equality in distribution. The case when $\theta=1$ corresponds to the standard Dickman distribution, which first appeared in the work of Dickman \cite{Di30} in the context of \textit{smooth numbers}. Since then, the Dickman distributions have arisen in many areas like certain count statistics in logarithmic and quasi-logarithmic combinatorial structures \cite{ABT_article,ABT,BaNi11}, power weighted sums of lengths of edges joining minimal points to the origin in minimal directed spanning trees \cite{PW04}, certain sums of independent random variables \cite{Pi16,Pi16b}, and the runtime of Quickselect algorithm \cite{Hwa,Go17}, to name a few. In the context of Vervaat perpetuities, see \cite{Ve79}, one interprets \eqref{def:Dickman} as the relation between two consecutive values of a perpetuity, so that the Dickman distributions are their stationary distributions.

Given its appearance in such varied applications, not surprisingly, there has been considerable interest in proving bounds for Dickman approximations \cite{Go17v1,Go17,AMPS,BG17}. To obtain such a result, one needs a notion of distance between probability distributions. For two non-negative random variables $X$ and $Y$ and a large class of measurable real-valued test functions $\mathcal{H}$ on $\R_+:=[0,\infty)$, define the distance $d_{\mathcal{H}}$ between (the distributions of) $X$ and $Y$ as
\begin{equation}\label{eq:gendis}
	d_{\mathcal{H}}(X,Y) =\sup_{h \in \mathcal{H}} |\E\, [h(X)]-\E\, [h(Y)]|.
\end{equation}
Since we are only interested in the comparison of non-negative random variables throughout this paper, we introduce all distances in terms of test functions on $\mathbb{R}_+$, but by extending their domain to $\mathbb{R}$, all the distances can be defined for general real-valued random variables. The so-called Wasserstein distance is defined by setting $\mathcal{H}={\rm Lip}_1$ in \eqref{eq:gendis}, where for $\alpha \ge 0$,
\begin{equation*}
	{\rm Lip}_\alpha = \{h:\mathbb{R}_+\to\mathbb{R} \text{ with } |h(x)-h(y)| \le \alpha |x-y| \,\, \forall \,\, x,y\in\mathbb{R}_+\}.
\end{equation*} 
In \cite{Go17v1}, Stein's method was used to develop a machinery to prove bounds for the approximation by Dickman distributions in the Wasserstein distance. The argument in \cite{Go17v1} needed one to construct a coupling of a random variable with a distributional transform of it, the so-called \textit{Dickman bias transform}. While this could be achieved for the runtime of Quickselect algorithm, for other examples of Dickman convergences, this coupling appeared elusive. In \cite{BG17}, an alternative approach using a different Stein equation was proposed which does not require the construction of such a coupling and allows one to handle several other examples. But the method in \cite{BG17} leads only to bounds in a distance weaker than the Wasserstein distance, the so-called Wasserstein-2 distance $d_{1,1}$, which is obtained by taking the class of test functions 
\begin{equation}\label{eqn:TestFunctionsH11}
\mathcal{H}=\mathcal{H}_{1,1} := \{h : \R_+ \to \R \text{ with } h \in {\rm Lip}_1, h' \in {\rm Lip}_1\}
\end{equation}
in \eqref{eq:gendis}. For the even weaker Wasserstein-3 distance, where the test functions are required to have three bounded derivatives, Dickman approximation via the Stein-Tikhomirov method was considered in \cite{AMPS}.

In the present work, our aim is to develop a framework to provide bounds in the Kolmogorov distance for the Dickman approximation. Denote by $\mathcal{H}_K$ the class of functions 
\begin{equation}\label{eq:dK}
\mathcal{H}_K=\{\mathbb{R}_+\ni x\mapsto \mathds{1}(x \le a): a\in \R_+\}.
\end{equation}
The Kolmogorov distance between (the distributions of) two non-negative random variables $X$ and $Y$ is defined as
\begin{equation*}
d_K(X,Y)=\sup_{h \in \mathcal{H}_K} |\E\,[h(X)]-\E \,[h(Y)]|=\sup_{a \in \R_+} |\Prob{X \le a}-\Prob{Y \le a}|,
\end{equation*}
which is the supremum norm of the difference of the cumulative distribution functions of $X$ and $Y$. Thus, the Kolmogorov distance is much more straightforward to interpret than the Wasserstein or the Wasserstein-2 distance.  It is arguably the most prominent distance between probability distributions and is also considered in the classical Berry-Esseen inequality for the normal approximation. Due to the lack of smoothness of the test functions in $\mathcal{H}_K$, it is often very challenging to obtain bounds in the Kolmogorov distance. One of the main achievements of our paper is to provide a general method to prove bounds on the Kolmogorov distance for Dickman convergences.

For independent $B_k \sim {\rm Ber}(1/k)$, $k \in \N$, i.e., $\Prob{B_k=1}=1/k=1-\Prob{B_k=0}$, the random variable $\sum_{k=1}^n k B_k$, and more general weighted Bernoulli sums have appeared in many applications, e.g.\ in the context of the sum of positions of \textit{records} in a uniformly random permutation of $\{1,\dots,n\}$ (see \cite{Re62}), runtime of Quickselect algorithm \cite{Hwa}, and weighted distances and weighted depths in randomly grown simple increasing trees \cite{KP07}. It is well-known that the random variable $n^{-1}\sum_{k=1}^n k B_k$ converges in distribution to the standard Dickman distribution (see e.g.\ \cite[Theorem~6.3]{ABT}). 
For this convergence, recently in \cite{BG17}, a bound of the order $1/n$ was established for the Wasserstein-2 distance. In our first main theorem, we significantly extend this result in several directions. We allow for more general Bernoulli random variables $B_k \sim {\rm Ber}(\theta/(k+\beta))$ for $\theta>0$ and $\beta\in\mathbb{R}$, and consider $\frac{1}{n}\sum_{k=l}^n k B_k$ for a suitable $l\in\mathbb{N}$. For the Kolmogorov distance between such random variables and generalized Dickman distributed random variables, we derive upper bounds and establish the optimality of the rates of convergence. 
Throughout, we will denote a Dickman distributed random variable with parameter $\theta>0$ by $D_\theta$.

\begin{theorem}\label{thm.ber}
	For $\theta>0$, $\beta \in \R$ and $n, l \in \N$ with $n \ge l \ge \theta-\beta$, let $W_n= n^{-1} \sum_{k=l}^n kB_k$, where $B_k \sim {\rm Ber}(\theta/(k+\beta))$ are independent. Then, there exists a constant $C \in (0,\infty)$ depending only on $\theta$ and $\beta$ such that
\begin{equation}\label{eq:thm1}
	d_K(W_n,D_\theta) \le \begin{dcases} C \left(\frac{l + |\beta| \log(n/l)}{n}\right), & \quad \theta\geq 1,\\  \frac{C l^\theta}{n^\theta}, & \quad \theta \in (0,1). \end{dcases}
\end{equation}
Moreover, the bound is of optimal order in $n$, that is, there exists a constant $C' \in (0,\infty)$ depending only on $\theta, \beta$ and $l$ such that
$$
d_K(W_n,D_\theta) \ge \begin{dcases} C' \left(\frac{1 + |\beta| \log n}{n}\right), & \quad \theta\geq 1,\\  \frac{C'}{n^\theta}, & \quad \theta \in (0,1). \end{dcases}
$$
\end{theorem}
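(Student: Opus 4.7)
The plan is to apply Stein's method tailored to the generalized Dickman distribution. From the fixed-point characterization \eqref{def:Dickman}, after the change of variables $v=u^{1/\theta}$ an integration by parts shows that $\E f(D_\theta) = \E f(U^{1/\theta}(D_\theta+1))$ is equivalent to
\[
\E\bigl[D_\theta\, f'(D_\theta) - \theta(f(D_\theta+1)-f(D_\theta))\bigr]=0
\]
for suitable $f$, identifying the natural Stein operator $\A f(x):= xf'(x) - \theta(f(x+1)-f(x))$. For each $a\ge 0$ we solve the Stein equation $\A f_a = h_a - \P(D_\theta\le a)$ with $h_a(x)=\mathds{1}(x\le a)$ by multiplying through by the integrating factor $x^{\theta-1}$, rewriting it as $(x^\theta f_a(x))' = x^{\theta-1}[\theta f_a(x+1) + h_a(x) - \P(D_\theta\le a)]$, and integrating. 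Because $h_a$ is discontinuous, $f_a'$ has a jump at $a$ of size $\asymp 1/a$; nonetheless we establish that $f_a$ is globally Lipschitz with $\|f_a'\|_\infty$ bounded uniformly in $a$ when $\theta\ge 1$, whereas for $\theta\in(0,1)$ only a weighted bound on $f_a'$ (reflecting the singular Dickman density $p_\theta(x)\propto x^{\theta-1}$ near $0$) is available. Establishing these smoothness properties of $f_a$ is a crucial preparatory step.

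With the Stein solution in hand, $d_K(W_n,D_\theta) = \sup_a|\E[\A f_a(W_n)]|$. Exploiting independence, write $W_n = W_n^{(k)} + \tfrac{k}{n}B_k$ with $W_n^{(k)}:=W_n-\tfrac{k}{n}B_k$ independent of $B_k$. Since $B_k\in\{0,1\}$, $B_k g(W_n) = B_k g(W_n^{(k)}+k/n)$, so
\[
\E[W_nf_a'(W_n)]=\sum_{k=l}^n\frac{k}{n}\cdot\frac{\theta}{k+\beta}\,\E\bigl[f_a'\bigl(W_n^{(k)}+k/n\bigr)\bigr].
\]
Writing $\frac{k}{k+\beta}=1-\frac{\beta}{k+\beta}$, the correction piece contributes at most a constant times $\frac{|\beta|\log(n/l)}{n}\|f_a'\|_\infty$. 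The leading piece $\frac{\theta}{n}\sum_{k=l}^n\E[f_a'(W_n^{(k)}+k/n)]$ is compared to $\theta\,\E[f_a(W_n+1)-f_a(W_n)] = \theta\int_0^1\E[f_a'(W_n+s)]\,ds$ in three steps: (i) replacing $W_n^{(k)}$ by $W_n$ (a shift of $\le k/n$ whose effect is controlled through the jump of $f_a'$ together with a tail estimate for $W_n$ around $a$), (ii) approximating the discrete sum by the integral over $[l/n,1]$, and (iii) absorbing the missing range $\theta\int_0^{l/n}\E[f_a'(W_n+s)]\,ds$, which produces the $l/n$ term for $\theta\ge 1$ and the sharper $(l/n)^\theta$ term for $\theta\in(0,1)$ (via the weighted bound on $f_a'$ together with a bound on $\P(W_n\le\eps)$ for small $\eps$).

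For the lower bound, in the regime $\theta\in(0,1)$ we exploit the atom $\P(W_n=0)=\prod_{k=l}^n(1-\theta/(k+\beta))\asymp n^{-\theta}$ (obtained by taking $\log$ of the product) together with $\P(D_\theta\le a)\to 0$ as $a\downarrow 0$, so evaluating the CDF difference just above $0$ yields $d_K\gtrsim n^{-\theta}$. For $\theta\ge 1$, the $1/n$ lower bound comes from the discreteness of $W_n$ (whose atoms live on $n^{-1}\N_0$) set against the smooth $F_{D_\theta}$ at a point where $p_\theta$ is bounded below, and the $|\beta|\log n/n$ piece is extracted from the mean mismatch $\E W_n-\E D_\theta$. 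The principal obstacle throughout is the jump of $f_a'$ at $a$: standard Lipschitz-type arguments fail when we shift the argument of $f_a'$, forcing us to integrate across the jump and bound the contribution of intervals containing $a$ by a tail-style estimate on $W_n$. Capturing the phase transition at $\theta=1$ further requires carefully distinguishing the uniform bound on $f_a'$ (for $\theta\ge 1$) from the weighted bound (for $\theta<1$) and treating the missing-interval term $\int_0^{l/n}\E[f_a'(W_n+s)]\,ds$ differently in each regime.
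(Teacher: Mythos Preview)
Your overall strategy via the Stein operator $\A f(x)=xf'(x)-\theta(f(x+1)-f(x))$ and the leave-one-out decomposition matches the paper's, but there is a genuine gap in the regularity claim for the Stein solution. You assert that $\|f_a'\|_\infty$ is bounded uniformly in $a$ when $\theta\ge 1$. This is false: the paper shows (Theorem~\ref{thm.bdkol}) that $f_a=f_1+f_2$ with $f_1(x)=\min\{1,a^\theta/x^\theta\}-\P\{D_\theta\le a\}$, so $f_1'(x)=-\theta a^\theta x^{-\theta-1}\mathds{1}(x>a)$ and $\sup_x|f_1'(x)|=\theta/a$; only $f_2'$ is uniformly bounded (by $2\theta^2$). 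Hence $\|f_a'\|_\infty\asymp 1/a$ as $a\to 0$, and your bound on the $\beta$-correction, namely $\frac{|\beta|\log(n/l)}{n}\|f_a'\|_\infty$, is useless for small $a$; the same problem afflicts your steps (i)--(iii). The paper circumvents this by treating $f_1'$ and $f_2'$ separately. For $f_2'$ it uses not boundedness alone but the structural fact $f_2'=u_+-u_-$ with $u_\pm$ non-negative and \emph{non-increasing}, which makes the relevant sums telescope. For $f_1'$ it uses the explicit form together with concentration estimates of the type $\P\{W_n\le a\}\le R_\theta(a)a + d_K(W_n,D_\theta)$ and $\E\big[\tfrac{a^\theta}{(W_n+u)^{\theta+1}}\mathds{1}(W_n+u>a)\big]\lesssim R_\theta(\max\{a,u\})+d_K(W_n,D_\theta)/\max\{a,u\}$. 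These yield a bound of the schematic form
\[
|\P\{W_n\le a\}-\P\{D_\theta\le a\}|\;\le\; \frac{C_1}{n}\Big(R_\theta(a)+\frac{d_K(W_n,D_\theta)}{a}\Big)+\frac{C_2}{n},
\]
and the argument is closed by restricting to $a\ge M_n$ for a suitable $M_n$ of order $l/n$ (so the coefficient of $d_K$ is at most $1/2$), then invoking a separate lemma to handle $a<M_n$. This recursive bootstrap is essential and is absent from your sketch.

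For the lower bound when $\theta\ge 1$, your claim that the $|\beta|\log n/n$ piece is ``extracted from the mean mismatch $\E W_n-\E D_\theta$'' does not work directly: there is no general inequality $|\E X-\E Y|\lesssim d_K(X,Y)$, since $\int_0^\infty|F_X-F_Y|$ need not be comparable to $\sup|F_X-F_Y|$. The paper instead couples $W_n$ with a sum $S_n'$ having $\beta=0$ and $l=\lceil\theta\rceil$, for which the already-proved upper bound gives $d_K(S_n',D_\theta)=O(1/n)$, and then uses a local Taylor expansion of $t\mapsto\P\{D_\theta\le t\}$ at $t=1$ (where the density is smooth and positive) to convert the expected shift $\E\Delta\asymp|\beta|\log(n/l)/n$ into a pointwise gap at $a=1$. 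Your lower bound for $\theta\in(0,1)$ via the atom $\P\{W_n=0\}$ is correct and in fact slightly simpler than the paper's argument.
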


Note that the numerator $\theta$ of the success probabilities $\theta/(k+\beta)$ of the Bernoulli distributed random variables is the parameter of the limiting generalized Dickman distribution. The parameter $\beta$ in the denominator only affects the rate of convergence for $\theta\geq 1$. We obtain for $\beta=0$ the rate $1/n$ that was established in \cite{BG17} for the Wasserstein-2 distance. For $\beta\neq0$ the rate slows down due to the presence of an additional logarithmic factor. Our rates of convergence are of the optimal order in $n$ if we let $l$ be fixed. The lower bounds for Dickman approximation derived in this paper (see Proposition~\ref{prop:optimality}, also Remark~\ref{rem:opt}) allow even the situation that $l$ depends on $n$. Except for some small range in the case that $\theta\geq 1$ and $\beta<0$, they are of the same order in both $n$ and $l$ as in the upper bounds in \eqref{eq:thm1}.

In the following, we discuss two application of Theorem~\ref{thm.ber}. The first one is in the context of the runtime of the Quickselect algorithm to find the smallest number of a list of $n$ distinct numbers with $n \ge 2$. The algorithm works as follows: first choose a key $x$ from the $n$ numbers uniformly at random and then regroup the numbers into two groups corresponding to the elements whose values are less than and greater than $x$, respectively. Then we continue recursively by moving to the group left of $x$, or stop if $x$ is the desired smallest number.  Let $C_{n}$ be the number of comparisons made by the algorithm to find the smallest element of the list of $n$ distinct numbers. In \cite{Hwa}, it was shown that $C_n$, suitably normalized, converges in distribution to a Dickman distributed random variable. We refine this result by providing a bound of optimal order for the Kolmogorov distance, which is a consequence of our Theorem~\ref{thm.ber}. We note here that a bound of the same order was provided in \cite{Go17} in the Wasserstein distance using a completely different method employing the so-called Dickman bias coupling. We simply write $D$ for a standard Dickman random variable (i.e.\ the case $\theta=1$).

\begin{corollary}\label{cor:Q} There exists a universal constant $C \in (0,\infty)$ such that for any $n \in \N$ with $n\geq 2$,
	$$
	d_K(n^{-1} C_{n} - 1, D) \le \frac{C \log n}{n}.
	$$
	Moreover, the bound is of optimal order.
\end{corollary}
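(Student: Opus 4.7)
The plan is to express $n^{-1}C_n-1$ as a weighted Bernoulli sum of exactly the type treated in Theorem~\ref{thm.ber}, up to a coupling error of size $O(1/n)$ that is absorbed using the boundedness of the Dickman density. Recall the classical representation of the Quickselect comparison count obtained by coupling the algorithm to a uniform random permutation $\sigma$ of $\{1,\dots,n\}$ (see e.g.\ the analyses underlying \cite{Hwa,Go17}):
\begin{equation*}
C_n \;=\; (n-1) \,+\, \sum_{k=2}^n (k-2)\, I_k,
\end{equation*}
where $I_k := \mathds{1}\{k\text{ appears in }\sigma\text{ before all of }1,\dots,k-1\}$ are mutually independent with $I_k\sim{\rm Ber}(1/k)$; equivalently, $I_k$ indicates that item $k$ is ever chosen as a pivot. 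The $(k-2)$-weight comes from viewing the algorithm as a Markov chain of subarray sizes $n=N_0>N_1>\cdots>N_T=0$, whose visited states correspond bijectively (via a shift by $1$) to the $I_k$'s, and the number of comparisons in a round of size $N_i$ is $N_i-1$. Reindexing $B_m:=I_{m+2}\sim{\rm Ber}(1/(m+2))$ for $1\le m\le n-2$ yields
\begin{equation*}
X_n \;:=\; n^{-1} C_n - 1 \;=\; -\tfrac{1}{n} \,+\, \tfrac{1}{n}\sum_{m=1}^{n-2} m\, B_m.
\end{equation*}

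To fit the template of Theorem~\ref{thm.ber}, augment by drawing two further independent Bernoullis $B_{n-1}\sim{\rm Ber}(1/(n+1))$ and $B_n\sim{\rm Ber}(1/(n+2))$, independent of everything, and set
\begin{equation*}
W_n \;:=\; \tfrac{1}{n}\sum_{m=1}^{n} m\, B_m,
\end{equation*}
which is precisely of the form in Theorem~\ref{thm.ber} with $\theta=1$, $\beta=2$ and $l=1$; the theorem then gives $d_K(W_n,D)\le C(1+2\log n)/n$. Since $X_n$ is a function of $B_1,\dots,B_{n-2}$ alone and therefore independent of $(B_{n-1},B_n)$, with $\Prob{B_{n-1}=B_n=0}\ge 1-2/n$, conditioning on $(B_{n-1},B_n)$ and using that $P(X_n\le a-c)\le P(X_n\le a)$ for $c\ge 0$ delivers the one-sided pointwise estimate
\begin{equation*}
0 \;\le\; \Prob{X_n\le a} \,-\, \Prob{W_n\le a+1/n} \;\le\; 2/n, \qquad a\in\R_+.
\end{equation*}
Combining this with the triangle inequality and the elementary bound $\sup_a|\Prob{D\le a+1/n}-\Prob{D\le a}|\le \|f_D\|_\infty/n$ (the standard Dickman density $f_D$ is bounded above by $e^{-\gamma}$, since $f_D(x)=e^{-\gamma}\rho(x)$ with $\rho$ Dickman's function, which is $\le 1$) yields $d_K(X_n,D)\le C'\log n/n$, the claimed upper bound.

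For the matching lower bound, apply the same triangle inequality in reverse to obtain $d_K(W_n,D)\le 2/n + \|f_D\|_\infty/n + d_K(X_n,D)$, and invoke the lower bound in Theorem~\ref{thm.ber}, which gives $d_K(W_n,D)\ge C''(1+2\log n)/n$, so that $d_K(X_n,D)\ge C'''\log n/n$ for large $n$. The only step that is not purely mechanical is the coupling estimate above; the essential point is the independence of $X_n$ from the two augmentation variables $B_{n-1},B_n$, without which the conditioning argument would not produce a one-sided $O(1/n)$ bound uniform in $a$.
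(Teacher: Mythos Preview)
Your proof is correct and follows essentially the same route as the paper's own argument: both start from the representation $C_n=(n-1)+\sum_{k=2}^n (k-2)I_k$ with independent $I_k\sim{\rm Ber}(1/k)$, reindex to obtain a sum of the form $n^{-1}\sum_{m=1}^{n-2} m B_m$ with $B_m\sim{\rm Ber}(1/(m+2))$, augment by two extra Bernoulli variables $B_{n-1},B_n$ to reach exactly the weighted sum $W_n$ covered by Theorem~\ref{thm.ber} with $\theta=1$, $\beta=2$, $l=1$, and then control the discrepancy via the coupling error $\mathbf{P}\{B_{n-1}\neq 0\text{ or }B_n\neq 0\}\leq 2/n$ together with the density bound $\|f_D\|_\infty\leq e^{-\gamma}$ for the $1/n$ shift. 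The only cosmetic difference is that the paper packages the coupling step as $d_K(W_n'-R_n,W_n')\leq \mathbf{P}\{R_n\neq 0\}$, whereas you spell out the equivalent conditioning inequality $0\leq \mathbf{P}\{X_n\leq a\}-\mathbf{P}\{W_n\leq a+1/n\}\leq 2/n$; both yield the same $O(1/n)$ correction and the same upper and lower bounds of order $(\log n)/n$.
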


As our second application, we consider the weighted depth $W_{n}$ of the node $n$ in a randomly grown simple increasing tree of size $n \in \N$. By \cite[Section~2.2, Lemma~1]{KP07} (see also \cite[Lemma~5]{PP07}), each such tree can be constructed via a tree evolution process, which is described by intrinsic parameters $c_1 \in \R\setminus\{0\}$ and $c_2 \in \R$ with $c_2/c_1>-1$ and works as follows: At step 1, we start with the root labeled 1. For $i \in \N$, the step $i+1$ involves adding the new node $i+1$ to the tree by attaching it to a previous node $v$ with probability $p(v)$ that is a function of $i$, the intrinsic parameters $c_1$ and $c_2$, and the out-degree of $v$ at time $i$ (see \cite[Section~2.2, Lemma~1]{KP07} for exact formulae). The weighted depth $W_n$ of the node $n$ is now the sum of all labels on the path from $n$ to the root $1$, including the labels $1$ and $n$ (see \cite[Section~1]{KP07}). In \cite[Section 3, Theorem 1]{KP07}, it was shown that one obtains the Dickman distribution with parameter $1+c_2/c_1$ as the limiting distribution for the appropriately re-scaled weighted depth $W_n$. In the following corollary, we provide an optimal rate of convergence in this case.

\begin{corollary}\label{cor:IT} For $n \in \N$, let $W_{n}$ be the weighted depth of node $n$ in a randomly grown simple
	increasing tree of size $n$ and write $\beta:=c_2/c_1$. Then
	$$
	d_K\left(\frac{W_{n} - n}{n}, D_{1+\beta}\right) \le  \begin{dcases} C \left(\frac{1 + \beta \log n}{n}\right), & \quad \beta \geq 0,\\  \frac{C}{n^{1+\beta}}, & \quad \beta \in (-1,0), \end{dcases}
	$$
	for some constant $C\in (0,\infty)$ depending only on $\beta$. Moreover, the bound is of optimal order.
\end{corollary}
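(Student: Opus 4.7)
The plan is to reduce Corollary~\ref{cor:IT} to Theorem~\ref{thm.ber} via the distributional representation of $W_n$ provided by \cite{KP07}. From the tree evolution process recalled in \cite[Section~2.2, Lemma~1]{KP07} (see also the proof of \cite[Theorem~1]{KP07}), the indicators $I_k := \mathds{1}\{\text{node } k \text{ lies on the path from node } n \text{ to the root}\}$ satisfy $I_1 = I_n = 1$ deterministically, while $(I_k)_{k=2}^{n-1}$ are mutually independent with $I_k \sim \mathrm{Ber}((1+\beta)/(k+\beta))$. Consequently
\begin{equation*}
\frac{W_n - n}{n} \;=\; \frac{1}{n} \;+\; \frac{1}{n}\sum_{k=2}^{n-1} k\,I_k.
\end{equation*}

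I next introduce an auxiliary Bernoulli random variable $B_n \sim \mathrm{Ber}((1+\beta)/(n+\beta))$ independent of $(I_k)_{k=2}^{n-1}$, and set $\widetilde W_n := n^{-1}\sum_{k=2}^{n-1} k\,I_k + B_n$. Then $\widetilde W_n$ has exactly the form studied in Theorem~\ref{thm.ber} with the parameter $1+\beta$ in place of $\theta$, the Corollary's $\beta$, and $l = 2$; the hypothesis $l \ge (1+\beta) - \beta = 1$ is satisfied. Theorem~\ref{thm.ber} thus supplies the claimed upper bound for $d_K(\widetilde W_n, D_{1+\beta})$ and, since $l$ is fixed, a matching lower bound of the same order.

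It then suffices to show that $d_K\!\big((W_n-n)/n,\widetilde W_n\big)$ is dominated by the bound of Theorem~\ref{thm.ber}. Setting $Y_0 := n^{-1}\sum_{k=2}^{n-1} k\,I_k$ we have $(W_n-n)/n = Y_0 + 1/n$ and $\widetilde W_n = Y_0 + B_n$ with $Y_0 \perp B_n$, and a direct computation of the two distribution functions gives
\begin{equation*}
d_K\!\big((W_n-n)/n,\widetilde W_n\big) \;\le\; \Prob{B_n = 1} \;+\; \sup_{a \in \R_+}\big(F_{Y_0}(a) - F_{Y_0}(a - 1/n)\big).
\end{equation*}
The first summand is of order $1/n$. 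For the second, I insert $F_{D_{1+\beta}}$ via the triangle inequality; Theorem~\ref{thm.ber} applied to $\widetilde W_n$, together with the elementary estimate $d_K(Y_0, \widetilde W_n) = O(1/n)$ (obtained by the same type of CDF computation using $\widetilde W_n = Y_0 + B_n$), transfers the Dickman approximation rate to $Y_0$, so that the problem reduces to controlling $\sup_a\big(F_{D_{1+\beta}}(a) - F_{D_{1+\beta}}(a-1/n)\big)$. The near-origin asymptotics $\rho_{1+\beta}(x)\sim x^{\beta}/\Gamma(1+\beta)$ of the Dickman density imply that this oscillation is $O(1/n)$ when $\beta\ge 0$ and $O(n^{-(1+\beta)})$ when $\beta\in(-1,0)$. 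In both regimes this correction is of smaller or equal order than the rate in Corollary~\ref{cor:IT}, so the triangle inequality (and its reverse for the lower bound) delivers the corollary.

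The main subtlety is the control of the oscillation of $F_{D_{1+\beta}}$ near the origin in the subcritical regime $\beta\in(-1,0)$, where the density is unbounded. This is handled by using the explicit small-argument asymptotics of the Dickman density instead of a uniform Lipschitz bound, and the resulting correction $O(n^{-(1+\beta)})$ matches precisely the Dickman approximation rate in the same regime, so no loss is incurred in the final estimate.
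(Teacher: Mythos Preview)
Your reduction to Theorem~\ref{thm.ber} is sound for the upper bound, and the overall architecture matches the paper's. However, there is a genuine gap in your optimality argument, and it stems from an unnecessary complication in how you handle the $k=1$ term.

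The paper observes that since $(1+\beta)/(1+\beta)=1$, the deterministic indicator $I_1\equiv 1$ is itself a $\mathrm{Ber}((1+\beta)/(1+\beta))$ variable. Hence one may write
\[
\frac{W_n-n}{n}=\frac{1}{n}\sum_{k=1}^{n-1}kB_k'=W_n'-B_n',\qquad W_n':=\frac{1}{n}\sum_{k=1}^{n}kB_k',
\]
with $W_n'$ exactly of the form in Theorem~\ref{thm.ber} (with $l=1$). The only correction is then $d_K(W_n',W_n'-B_n')\le \Prob{B_n'\neq 0}=(1+\beta)/(n+\beta)=O(1/n)$, which is of \emph{strictly smaller} order than the target rate whenever $\beta\neq 0$, so the reverse triangle inequality immediately yields the lower bound.

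By contrast, you isolate the $1/n$ shift and then bound
\[
\sup_a\bigl(F_{Y_0}(a)-F_{Y_0}(a-1/n)\bigr)\le \sup_a\bigl(F_{D_{1+\beta}}(a)-F_{D_{1+\beta}}(a-1/n)\bigr)+2\,d_K(Y_0,D_{1+\beta}).
\]
The term $2\,d_K(Y_0,D_{1+\beta})$ is, via Theorem~\ref{thm.ber}, only known to be $\le 2C\,r_n$ where $r_n$ is the \emph{target rate itself} and $C$ is the (possibly large) upper-bound constant. Plugging this into the reverse triangle inequality gives
\[
d_K\!\left(\frac{W_n-n}{n},D_{1+\beta}\right)\ge (\underline C-2C)\,r_n - O(\text{lower order}),
\]
and there is no reason for $\underline C>2C$. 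So ``smaller or equal order'' is not enough here: when the correction is merely of equal order, the reverse triangle inequality can yield a nonpositive bound. This breaks your lower bound in particular for $\beta>0$ (rate $\asymp(\log n)/n$), where no alternative such as Proposition~\ref{prop:intapp} is available to rescue the argument. The fix is simply to absorb the $k=1$ term into the Bernoulli sum as the paper does, which keeps the correction at $O(1/n)$.
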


We also prove a result similar to Theorem~\ref{thm.ber} for weighted sums of Poisson random variables.
Below, ${\rm Poi}(\lambda)$ stands for the Poisson distribution with mean $\lambda>0$.

\begin{theorem}\label{thm:Poi}
For $\theta>0$, $\beta \in \R$ and $n, l \in \N$ with $n \ge l > -\beta$, let $W_n= n^{-1} \sum_{k=l}^n kP_k$, where $P_k \sim {\rm Poi}(\theta/(k+\beta))$ are independent. Then, there exists a constant $C \in (0,\infty)$ depending only on $\theta$ and $\beta$ such that
	\begin{equation*}
			d_K(W_n,D_\theta) \le \begin{dcases} C \left(\frac{l + |\beta| \log(n/l)}{n}\right), & \quad \theta\geq 1,\\  \frac{C l^\theta}{n^\theta}, & \quad \theta \in (0,1). \end{dcases}
	\end{equation*}
Moreover, the bound is of optimal order in $n$.
\end{theorem}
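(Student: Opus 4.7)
My plan is to follow the proof of Theorem~\ref{thm.ber} very closely, with the key modification being the replacement of the Bernoulli-specific size-bias step by its Poisson analogue. I would work with the Stein operator
\begin{equation*}
\A g(x) = \theta\bigl(g(x+1) - g(x)\bigr) - x g'(x)
\end{equation*}
for $D_\theta$, which annihilates $D_\theta$ in expectation for sufficiently regular $g$ (as one can verify e.g.\ via Laplace transforms together with \eqref{def:Dickman}). Writing $g_a$ for the solution of $\A g_a(\cdot) = \mathds{1}(\cdot \le a) - \Prob{D_\theta \le a}$, the Kolmogorov distance equals $\sup_a |\E[\A g_a(W_n)]|$, and the smoothness bounds on $g_a$ for Kolmogorov test functions, developed earlier in the paper, are the main analytic input.

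The main computation uses the Poisson integration-by-parts identity $\E[P_k f(P_k)] = \lambda_k \E[f(P_k+1)]$. Applied to $f(P_k) = g'(W_n)$ with $\lambda_k = \theta/(k+\beta)$, and observing that incrementing $P_k$ by one shifts $W_n$ by $k/n$, it gives $\E[P_k g'(W_n)] = (\theta/(k+\beta))\,\E[g'(W_n + k/n)]$. Expanding $k/(k+\beta) = 1 - \beta/(k+\beta)$ and combining with $\theta\,\E[g(W_n+1)-g(W_n)] = \theta\int_0^1 \E[g'(W_n+u)]\,du$ yields
\begin{equation*}
\E[\A g(W_n)] = \theta\!\left(\int_0^1 \E[g'(W_n+u)]\,du - \frac{1}{n}\sum_{k=l}^n \E[g'(W_n+k/n)]\right) + \frac{\theta\beta}{n}\sum_{k=l}^n \frac{\E[g'(W_n+k/n)]}{k+\beta}.
\end{equation*}
The bracket decomposes into a boundary integral over $[0,(l-1)/n]$ of size $\lesssim (l-1)\|g_a'\|_\infty/n$ and a right-endpoint Riemann-sum error on $[(l-1)/n,1]$ of size $\lesssim \|g_a''\|_\infty/n$, while the $\beta$-piece is of order $|\beta|\,\|g_a'\|_\infty \log(n/l)/n$.

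For $\theta \ge 1$ the Stein solution admits uniform bounds $\|g_a'\|_\infty,\|g_a''\|_\infty \le C$, and the decomposition directly produces the rate $C(l + |\beta|\log(n/l))/n$. The main obstacle is the regime $\theta \in (0,1)$, where $g_a''$ fails to be uniformly bounded, mirroring the $x^{\theta-1}$ blow-up of the density of $D_\theta$ at the origin. In this case I would combine an integrability-type estimate for $g_a''$ with the fact that $W_n$ carries a macroscopic atom at $0$ of size $\Prob{W_n = 0} = \exp\bigl(-\theta\sum_{k=l}^n 1/(k+\beta)\bigr) = \Theta((l/n)^\theta)$, closing the upper bound at the rate $(l/n)^\theta$. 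This same atom supplies the matching lower bound when $\theta \in (0,1)$, since testing at $a = 0$ gives directly $\Prob{W_n \le 0} - \Prob{D_\theta \le 0} = \Prob{W_n=0} = \Theta((l/n)^\theta)$. The optimal lower bounds in the remaining cases, notably the $\log n/n$ factor for $\theta \ge 1$ and $\beta \ne 0$, then follow exactly as in the Bernoulli setting via Proposition~\ref{prop:optimality}, using that the mean discrepancy satisfies $\E[D_\theta] - \E[W_n] = \theta\beta\log(n/l)/n + O(l/n)$.
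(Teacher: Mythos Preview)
Your Poisson size-bias decomposition is correct and gives a clean starting point, but the proposal has a genuine gap at the step where you invoke uniform bounds $\|g_a'\|_\infty,\|g_a''\|_\infty\le C$. These bounds are \emph{false} for the Kolmogorov Stein solution, even when $\theta\ge 1$. By Theorem~\ref{thm.bdkol}(b), the solution splits as $f=f_1+f_2$ with $f_1(x)=\min\{1,a^\theta/x^\theta\}-\Prob{D_\theta\le a}$; hence $f_1'(x)=-\theta a^\theta x^{-\theta-1}\mathds{1}(x>a)$ has a jump of size $\theta/a$ at $x=a$, so $\sup_a\|f'\|_\infty=\infty$ and $f''$ does not even exist at $a$. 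Consequently your ``Riemann-sum error $\lesssim\|g_a''\|_\infty/n$'' and ``$\beta$-piece $\lesssim|\beta|\,\|g_a'\|_\infty\log(n/l)/n$'' estimates are not available, and the argument as written does not close for any $\theta>0$, not just $\theta\in(0,1)$.

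This is precisely the difficulty the paper's proof of Theorem~\ref{thm.ber'} is built to overcome: one uses the explicit form of $f_1'$ together with the monotonicity of $u_\pm$ (so that sums telescope), bounds terms like $\E[(\theta/a)\mathds{1}(W_n\le a)]$ and $\E[\theta a^\theta W_n^{-\theta-1}\mathds{1}(W_n>a)]$ via Lemma~\ref{lem:mom}, and arrives at an inequality in which $d_K(W_n,D_\theta)$ appears on the right with a coefficient that can be made $<1$ by restricting to $a\ge M_n$ and then applying Lemma~\ref{lem:lb}. Your proposal skips this machinery. If you want a direct Poisson proof, you would need to rerun the full argument of Theorem~\ref{thm.ber'} with the identity $\E[P_k g'(W_n)]=\lambda_k\E[g'(W_n+k/n)]$ in place of \eqref{eq:Wk}; the Poisson identity is in fact cleaner (no $T_2$-type remainder), but the delicate treatment of $f_1'$ and the self-referential step are unavoidable.

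The paper itself sidesteps all of this for Theorem~\ref{thm:Poi}: it does not redo Stein's method for Poisson but instead approximates each $P_k$ by a $\mathrm{Bin}(m,\theta/(m(k+\beta)))$ variable, applies Theorem~\ref{thm.ber'} (whose constants do not depend on $m$), and lets $m\to\infty$ via a coupling. Your lower-bound arguments (atom at zero for $\theta\in(0,1)$; Proposition~\ref{prop:optimality} for $\theta\ge1$) are essentially the paper's.
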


The rates of convergence here depend on $\theta$ and $\beta$ exactly as in Theorem~\ref{thm.ber}. For $\beta=0$ rates of convergence of the order $1/n$ for the Wasserstein-2 distance were derived in \cite[Theorem~1.2]{BG17}.

In addition, we also improve the bounds in \cite[Theorems~1.1 and 1.2]{BG17} for the Wasserstein-2 distance, and in the Bernoulli case, generalize the result to any $\theta>0$, when randomly weighted sums are considered. Recall that $d_{1,1}$ is defined as in \eqref{eq:gendis} with the class of test functions taken to be $\mathcal{H}={\mathcal{H}}_{1,1}$ defined at \eqref{eqn:TestFunctionsH11}.

\begin{theorem}\label{thm:imp}
	Let $\theta>0$ and let $(Y_k)_{k \in \N}$ be a sequence of independent non-negative random variables with $\E\,[Y_k]=k$ and ${\rm Var}(Y_k)=\sigma_k^2$ for all $k \in \N$.
	\begin{enumerate}[(a)]
		\item For $W_n=n^{-1}\sum_{k=\lceil \theta \rceil}^n Y_k B_k$, where $(B_k)_{k \in \N}$ are independent with $B_k \sim {\rm Ber}(\theta/k)$, and also independent of $(Y_k)_{k \in \N}$,
		$$
		d_{1,1}(W_n,D_\theta) \le \frac{\theta(6\theta +1)}{4n} + \frac{\theta}{2n^2} \sum_{k=\lceil \theta \rceil}^n\frac{\sigma_k^2}{k}, \quad n \in \N.
		$$
		\item  For $W_n=n^{-1}\sum_{k=1}^n Y_k P_k$, where $(P_k)_{k \in \N}$ are independent with $P_k \sim {\rm Poi}(\theta/k)$, and also independent of $(Y_k)_{k \in \N}$,
		$$
		d_{1,1}(W_n,D_\theta) \le \frac{\theta}{4n} + \frac{\theta(1+\theta)}{2n^2} \sum_{k=1}^n\frac{\sigma_k^2}{k}, \quad n \in \N.
		$$ 
	\end{enumerate}
\end{theorem}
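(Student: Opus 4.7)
My plan decomposes the problem via the triangle inequality
\[
d_{1,1}(W_n, D_\theta) \le d_{1,1}(W_n, W_n^\circ) + d_{1,1}(W_n^\circ, D_\theta),
\]
where $W_n^\circ := n^{-1}\sum_k k X_k$ is the ``deterministic-weight'' version obtained by replacing each $Y_k$ with its mean $k$ (with $X_k = B_k$ in part (a), $X_k = P_k$ in part (b)), coupled to $W_n$ by using the same $X_k$'s in both.

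For the first term, fix $h \in \mathcal{H}_{1,1}$ and Taylor-expand at $W_n^\circ$:
\[
h(W_n) - h(W_n^\circ) = h'(W_n^\circ)(W_n - W_n^\circ) + R, \qquad |R| \le \tfrac{1}{2}\|h''\|_\infty (W_n - W_n^\circ)^2.
\]
Since $h' \in {\rm Lip}_1$ gives $\|h''\|_\infty \le 1$, it suffices to show that the linear term vanishes in expectation and then to control $\E[(W_n - W_n^\circ)^2]$. Writing $W_n - W_n^\circ = n^{-1}\sum_k (Y_k - k)X_k$ and observing that $W_n^\circ$ depends only on the $X_j$'s, the independence of $Y_k$ from $(X_k, W_n^\circ)$ together with $\E[Y_k - k] = 0$ yields $\E[h'(W_n^\circ)(Y_k - k)X_k] = 0$ term-by-term. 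The cross-terms in $\E[(W_n - W_n^\circ)^2]$ vanish by the same independence, giving $\E[(W_n - W_n^\circ)^2] = n^{-2}\sum_k \sigma_k^2 \E[X_k^2]$. Substituting $\E[B_k^2] = \theta/k$ yields the $\tfrac{\theta}{2n^2}\sum_k \sigma_k^2/k$ contribution of part (a), while $\E[P_k^2] = \theta/k + (\theta/k)^2 \le \theta(1+\theta)/k$ yields the $\tfrac{\theta(1+\theta)}{2n^2}\sum_k \sigma_k^2/k$ contribution of part (b). This entirely handles the randomness-in-weights.

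For the second term $d_{1,1}(W_n^\circ, D_\theta)$, apply Stein's method with the characterizing operator $\A f(w) = \theta(f(w+1) - f(w)) - w f'(w)$ for $D_\theta$, derived from the fixed-point identity $D_\theta \stackrel{d}{=} U^{1/\theta}(D_\theta + 1)$ by integration-by-parts in the uniform variable applied to $\int_0^1 \theta u \tfrac{d}{du}f(u^{1/\theta}(w+1))\,du$. For $h \in \mathcal{H}_{1,1}$ one solves $\A f_h = h - \E h(D_\theta)$; the explicit bounds on $\|f_h'\|_\infty$ and $\|f_h''\|_\infty$ (extending \cite{BG17} to arbitrary $\theta > 0$) drive the identity
\[
\E h(W_n^\circ) - \E h(D_\theta) = \theta\, \E[f_h(W_n^\circ + 1) - f_h(W_n^\circ)] - \E[W_n^\circ f_h'(W_n^\circ)].
\]
In part (a), writing $W_n^\circ = V_{n,k}^\circ + (k/n)B_k$ with $V_{n,k}^\circ := W_n^\circ - (k/n)B_k$ independent of $B_k$, and using $B_k^2 = B_k$, one obtains $\E[k B_k f_h'(W_n^\circ)] = \theta\, \E[f_h'(V_{n,k}^\circ + k/n)]$. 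The sum $\tfrac{\theta}{n}\sum_k \E[f_h'(V_{n,k}^\circ + k/n)]$ is compared to $\theta\int_0^1 \E[f_h'(W_n^\circ + u)]\,du$ through an independent coupling with $U \sim \Unif[0,1]$ and a uniform index $K$ on $\{\lceil\theta\rceil,\ldots,n\}$: using $V_{n,K}^\circ + K/n = W_n^\circ + K(1-B_K)/n$ and estimating $\E|U - K(1-B_K)/n|$ through the Lipschitz continuity of $f_h'$, plus a boundary correction of size $O(\theta \|f_h'\|_\infty/n)$ for the truncation $k < \lceil\theta\rceil$, produces the $\tfrac{\theta(6\theta+1)}{4n}$ prefactor. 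Part (b) is handled identically using the Poisson identity $\E[P_k g(P_k)] = (\theta/k)\E[g(P_k+1)]$ in place of the Bernoulli conditioning, yielding $\tfrac{\theta}{4n}$.

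The principal obstacle will be establishing the precise numerical constants $\theta(6\theta+1)/4$ and $\theta/4$ in the deterministic Stein bounds, which requires sharp tracking of $\|f_h'\|_\infty, \|f_h''\|_\infty$ through the Stein equation and careful accounting of the Riemann-sum and boundary errors in the uniform coupling. The randomness-in-weights contribution is, by contrast, entirely elementary once the decomposition is made: the essential observation is that the first-order Taylor term $\E[h'(W_n^\circ)(W_n - W_n^\circ)]$ vanishes term-by-term because $W_n^\circ$ is a function of the $X_j$'s only and each $Y_k$ is independent of $(X_k, W_n^\circ)$, so the entire weight-randomness correction is captured by the single second-moment quantity $\E[(W_n - W_n^\circ)^2]$.
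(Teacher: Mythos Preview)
Your proposal is correct and takes essentially the same approach as the paper: decompose via the triangle inequality through the deterministic-weight sum $\overline{W}_n$, kill the first-order Taylor term by conditioning on the $B_k$'s (resp.\ $P_k$'s) so that the random-weight correction reduces to $\tfrac12\E[(W_n-\overline W_n)^2]$, and bound $d_{1,1}(\overline{W}_n,D_\theta)$ via the Stein equation. The only difference is organizational: for (a) the paper obtains the exact constant $\theta(6\theta+1)/4$ by splitting the Stein remainder into the two pieces $\tfrac{1}{n}\sum_k\E\big[f'(\overline{W}_n^{(k)}+k/n)-f'(\overline{W}_n+k/n)\big]$ and a Riemann-sum-plus-boundary error (using $\|f'\|\le\theta$ and $f'\in\operatorname{Lip}_{\theta/2}$ from \cite{BG17}) rather than your joint $(U,K)$-coupling, while for (b) it simply invokes \cite[Theorem~1.2]{BG17} for the $\theta/(4n)$ bound.
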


\begin{remark}
In the setting of Theorem~\ref{thm:imp}(a), when $\theta=1$, Theorem~1.1 in \cite{BG17} provides the bound
$$
d_{1,1}(W_n,D) \le \frac{3}{4n} + \frac{1}{2n^2} \sum_{k=1}^n\frac{\sqrt{(\sigma_k^2 + k^2)\sigma_k^2}}{k}, \quad n \in \N.
$$
Thus, the second term of our bound in Theorem \ref{thm:imp}(a) is strictly smaller and is often of a better order. For instance, when $\sigma_k^2$ is of the order $k^{2-\eps}$ for some $\eps>0$, then the bound in \cite[Theorem~1.1]{BG17} is of the order $n^{-\min\{1,\eps/2\}}$ while our bound is of the order $n^{-\min\{1,\eps\}}$. Theorem~\ref{thm:imp}(b) improves upon \cite[Theorem~1.2]{BG17} in a similar way. 
\end{remark}

\begin{remark}
For normal approximation, it is often the case that Wasserstein type distances and the Kolmogorov distance have optimal bounds of the same order. Note that by Theorem~\ref{thm:imp}, one obtains an upper bound of the order $1/n$ on the Wasserstein-2 distance between $W_n=n^{-1}\sum_{k=\lceil \theta \rceil}^n k B_k$ or $W_n=n^{-1}\sum_{k=1}^n kP_k$ and $D_\theta$ for all $\theta>0$. But by Theorems~\ref{thm.ber} and \ref{thm:Poi}, for $\theta \in (0,1)$ the optimal bound in the Kolmogorov distance is in both cases of the order $1/n^\theta$, which is strictly larger.
\end{remark}

It remains an open question whether one can generalize the bounds in Theorems \ref{thm.ber} and \ref{thm:Poi} for the Kolmogorov distance to the situations with random weights as considered in Theorem~\ref{thm:imp}(a) and (b).

Results like Theorems~\ref{thm.ber} and \ref{thm:Poi} for the Kolmogorov distance are first of their kind in the context of Dickman convergences. Now we briefly discuss the approach we take to show these results. A very general technique to derive bounds for distributional approximations is to use Stein's method, as introduced in \cite{St72} and further developed in \cite{St86}. The first step in developing Stein's method for a particular target distribution is to specify a characterizing equation for the said distribution. The following result from \cite{Go17v1} gives such a characterization for the Dickman distributions.
For $\theta>0$ and a real-valued function $g$ on $\R_+$, define the averaging operator
\begin{equation}\label{def.A}
	\A_{\theta,x} g = \E\,[g(U^{1/\theta}x)]= \left\{
	\begin{array}{cl}
		g(0), \quad & \mbox{$x=0$,}\\
		\frac{\theta}{x^\theta} \int_0^x  g(u)u^{\theta-1}du, \quad & \mbox{$x>0$,}\end{array}\right. 
\end{equation}
where $U \sim \mathbb{U}[0,1]$. 

\begin{prop}[Lemma 3.2, \cite{Go17v1}]\label{prop.go}
	For $\theta>0$ and a non-negative random variable $W$, the following are equivalent:
	\begin{enumerate}[(a)]
		\item $W \sim D_\theta$.
		\item $\E\,[h(W)]=\E\,[\A_{\theta,W+1} h]$ for all $h$ for which $\E\,[h(W)]$ exists.
		\item $\E \,[(W/\theta)f'(W) - f(W+1) + f(W)]=0 \qm{for all $f \in \bigcup_{\alpha >0} {\rm Lip}_\alpha$}$.
	\end{enumerate}
\end{prop}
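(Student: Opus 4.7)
The plan is to prove the cycle (a)~$\Rightarrow$~(b)~$\Rightarrow$~(c)~$\Rightarrow$~(a). The first implication is immediate from the defining fixed-point equation \eqref{def:Dickman}: if $W \sim D_\theta$, then $W =_d U^{1/\theta}(W+1)$ for some $U \sim \mathbb{U}[0,1]$ independent of $W$, so conditioning on $W$ and using the definition of $\A$ in \eqref{def.A} yields
\begin{equation*}
\E\,[h(W)] = \E\,[h(U^{1/\theta}(W+1))] = \E\,\bigl[\E\,[h(U^{1/\theta}(W+1))\mid W]\bigr] = \E\,[\A_{\theta,W+1} h]
\end{equation*}
for every measurable $h$ with $\E\,[h(W)]$ finite.

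For (b)~$\Rightarrow$~(c), the essential ingredient is an integration-by-parts identity: for any $f \in \bigcup_{\alpha>0}{\rm Lip}_\alpha$ and $x>0$, setting $g(s):=sf'(s)$,
\begin{equation*}
\A_{\theta,x}\, g = \theta\bigl(f(x) - \A_{\theta,x} f\bigr).
\end{equation*}
This follows by writing the left-hand side as $\frac{\theta}{x^\theta}\int_0^x s^\theta f'(s)\,ds$ and integrating by parts once, the boundary term at zero vanishing because $\theta>0$ and $f$ is continuous. Evaluating at $x=W+1$ and applying (b) separately to $f$ and to $g$,
\begin{equation*}
\E\,[Wf'(W)] = \E\,[\A_{\theta,W+1} g] = \theta\bigl(\E\,[f(W+1)] - \E\,[\A_{\theta,W+1}f]\bigr) = \theta\bigl(\E\,[f(W+1)] - \E\,[f(W)]\bigr),
\end{equation*}
which is precisely (c) after dividing by $\theta$ and rearranging.

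For (c)~$\Rightarrow$~(a) I would test (c) against the parametric family $f_t(x)=1-e^{-tx}$, $t\ge 0$: each $f_t$ is bounded and lies in ${\rm Lip}_t$, so every expectation appearing in (c) is finite. With $L(t):=\E\,[e^{-tW}]$ and $\E\,[We^{-tW}]=-L'(t)$, substituting $f_t$ into (c) turns it into the linear first-order ODE
\begin{equation*}
L'(t) = \theta\,\frac{e^{-t}-1}{t}\,L(t), \qquad L(0)=1,
\end{equation*}
whose coefficient extends continuously to $-\theta$ at $t=0$. Its unique solution $L(t)=\exp\bigl(\theta\int_0^t (e^{-s}-1)/s\,ds\bigr)$ is the Laplace transform of $D_\theta$, and since Laplace transforms determine distributions on $\R_+$, this forces $W \sim D_\theta$.

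The main obstacle I anticipate is this last implication, since it is the only step that does not reduce to a direct manipulation of $\A$: one must identify a family of Lipschitz test functions rich enough to pin down the law of $W$ and verify that every expectation arising on substitution is well-defined. The Laplace family $\{f_t\}_{t\ge 0}$ is convenient here because $f_t$ and $f_t'$ are bounded and non-negative distributions are determined by their Laplace transforms, but the analogous step tends to be more delicate in other Stein-type characterization results.
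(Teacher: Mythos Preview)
The paper does not prove this proposition; it is quoted from \cite{Go17v1} (Lemma~3.2 there) and used as a black box, so there is no in-paper argument to compare your attempt against.

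Your cycle (a)~$\Rightarrow$~(b)~$\Rightarrow$~(c)~$\Rightarrow$~(a) is essentially correct, but there is one integrability point in (b)~$\Rightarrow$~(c) that you have glossed over: to invoke (b) for $g(s)=sf'(s)$ (and indeed for $f$ itself), you need $\E[|Wf'(W)|]<\infty$ and $\E[|f(W)|]<\infty$, and for a generic $f\in{\rm Lip}_\alpha$ both reduce to $\E[W]<\infty$, which has not been established from (b). The quickest fix is to note that (b) for bounded $h$ already says $W=_d U^{1/\theta}(W+1)$; iterating $k$ times gives $W=_d\sum_{j=1}^k (U_1\cdots U_j)^{1/\theta}+(U_1\cdots U_k)^{1/\theta}W^{(k)}$ with $W^{(k)}$ an independent copy of $W$, and since $(U_1\cdots U_k)^{1/\theta}\to 0$ a.s.\ while the partial sums converge a.s.\ to a variable of mean $\sum_{j\ge 1}(\theta/(\theta+1))^j=\theta$, one obtains $\E[W]=\theta<\infty$. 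With this in hand your integration-by-parts identity goes through cleanly. Your Laplace-transform argument for (c)~$\Rightarrow$~(a) is fine as written, since $f_t$, $f_t'$ and $x\mapsto x f_t'(x)$ are all bounded and no moment assumption on $W$ is needed there.
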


To apply Stein's method, one typically forms a Stein equation based on such a characterization. A Stein equation is usually a differential equation involving certain test functions in a large class corresponding to a desired probability distance. For an extensive treatment of the subject, see e.g.\ the book \cite{CGS} or the surveys \cite{Ro11,Ch14}.

We use the characterization $(c)$ in Proposition~\ref{prop.go} to form a Stein equation. For $\theta>0$, $\mathcal{H}$ an appropriate class of test functions and $h \in \mathcal{H}$, as in \cite{BG17}, we consider the Stein equation 
\begin{equation}\label{eq:fstein}
(x/\theta) f'(x)+f(x)-f(x+1)=h(x)-\E\, [h(D_{\theta})], \quad x \in \R_+.
\end{equation}
The next key step of the method is to prove that the solutions to the Stein equation over the given class of test functions are sufficiently smooth. This makes choosing the class $\mathcal{H}$ very crucial. Typically, the more smoothness the functions in $\mathcal{H}$ have, the smoother the solutions become. Unlike in \cite{BG17}, where the class of smooth functions $\mathcal{H}_{1,1}$ was considered, in this paper, we consider test functions in $\mathcal{H}_K$ defined at \eqref{eq:dK}, which are not everywhere smooth. In the following result, we prove the existence of a solution $f$ to the Stein equation \eqref{eq:fstein} when the test functions are in $\mathcal{H}_K$ and provide its smoothness properties. We denote by $\|\bdot\|_{\mathbb{R}_+}$ the supremum norm over $\mathbb{R}_+$ and write $\N_0:=\N \cup \{0\}$. For $k \in \N_0$, we let $ \A_{\theta,x+1}^k$ be the $k$-fold iteration of the operator $\A_{\theta,x+1}$, where we use the convention $\A_{\theta,x+1}^0 g=g(x)$ for $x\in \R_+$ and a real-valued function $g$ on $\R_+$.

\begin{theorem}\label{thm.bdkol}
Let $\theta>0$.
	\begin{enumerate}[(a)]
		\item For any $h \in \cup_{\alpha>0} {\rm Lip}_\alpha \cup \mathcal{H}_K$, the function series
\begin{equation} \label{def:hstar.g}
	g(x) :=\sum_{k \in \N_0} \A_{\theta,x+1}^k (h-\E \, [h(D_\theta)]), \quad x\in \R_+,
\end{equation}
is absolutely convergent.
\item For $a\in \R_+$, let $g_a$ be the function in \eqref{def:hstar.g} with $h=h_a(\bdot):=\mathds{1}(\bdot \le a)$. Then, $f(x)=\A_x g_a$ for $x\in \R_+$ solves the Stein equation \eqref{eq:fstein} on $\R_+ \setminus \{a\}$. Further, $f$ can be expressed as $f=f_1+f_2$ where $f_1(x)=\min\{1,a^\theta/x^\theta\} - \Prob{D_\theta \le a}$ and $f_2$ is a differentiable function such that $f_2'$ can be decomposed as $f_2'=u_{+} - u_{-}$ with $u_{+}$ and $u_{-}$ being non-negative and non-increasing, and satisfying
		\begin{equation}\label{bound:f2}
			\|u_{+}\|_{\mathbb{R}_+} \le \theta^2 \qmq{and} \|u_{-}\|_{\mathbb{R}_+} \le \theta^2.
		\end{equation}
	\end{enumerate}
\end{theorem}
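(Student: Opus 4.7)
The plan for part (a) is to use a probabilistic representation of the iterated operator $T := \A_{\theta, x+1}$. An induction yields $T^k h(x) = \E[h(W_k(x))]$, where $W_0(x) = x$ and $W_k(x) = U_k^{1/\theta}(W_{k-1}(x)+1)$ for an iid $\Unif[0,1]$ sequence $(U_j)_{j \ge 1}$. Coupling an independent $D_\theta^{(0)} \sim D_\theta$ through the same uniforms via $D_\theta^{(k)} := U_k^{1/\theta}(D_\theta^{(k-1)}+1)$ keeps $D_\theta^{(k)} \sim D_\theta$ for every $k$ by Proposition~\ref{prop.go}, and the recursion yields $|W_k(x) - D_\theta^{(k)}| = \prod_{j=1}^k U_j^{1/\theta}\,|x - D_\theta^{(0)}|$, so that $\E|W_k(x) - D_\theta^{(k)}| \le (\theta/(\theta+1))^k(x + \E D_\theta)$. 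Hence for any Lipschitz $h$ the quantity $|T^k(h - \E h(D_\theta))(x)|$ decays geometrically in $k$. For $h = h_a \in \mathcal{H}_K$ one computes $T h_a(x) = \min\{1, a^\theta/(x+1)^\theta\}$, which is Lipschitz, so the tail $\sum_{k \ge 1} T^k \tilde h_a$ converges absolutely by the Lipschitz case, while the $k = 0$ term is the bounded function $\tilde h_a$.

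For part (b), the Stein equation follows by combining the elementary identity $(x/\theta)\,\frac{d}{dx}(\A_{\theta,x} g) = g(x) - \A_{\theta,x} g$, valid wherever $g$ is continuous at $x > 0$, with the telescoping relation $\A_{\theta,x+1} g_a = g_a(x) - \tilde h_a(x)$, which is immediate from the defining series for $g_a$. Splitting $g_a = \tilde h_a + \psi$ with $\psi := \sum_{k \ge 1} T^k \tilde h_a$ gives $f = f_1 + f_2$, where a direct computation produces the stated closed form for $f_1 := \A_{\theta,x}\tilde h_a$, and $f_2 := \A_{\theta,x}\psi$. Each $T^k \tilde h_a$ for $k \ge 1$ is Lipschitz and non-increasing (the latter because $x \mapsto U^{1/\theta}(x+1)$ is monotone and $\tilde h_a$ is non-increasing), so $\psi$ is continuous and non-increasing, which makes $f_2 \in C^1((0,\infty))$ with $f_2'(x) = (\theta/x)(\psi(x) - f_2(x)) \le 0$.

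The delicate part is producing the decomposition $f_2' = u_+ - u_-$ with both pieces non-negative, non-increasing, and uniformly bounded by $\theta^2$ independently of $a$. My plan exploits the self-consistency $\psi(x) = \A_{\theta,x+1} g_a = f(x+1)$, which gives $\psi'(x) = f_1'(x+1) + f_2'(x+1)$ wherever differentiable, and hence a fixed-point equation
\begin{equation*}
f_2'(x) \;=\; \mathcal{L} f_2'(x) + R(x), \qquad \mathcal{L}F(x) := \theta \int_0^1 t^\theta F(tx+1)\,dt, \qquad R(x) := \theta \int_0^1 t^\theta f_1'(tx+1)\,dt.
\end{equation*}
Using the explicit form $f_1'(y) = -\theta a^\theta y^{-\theta-1}\mathds{1}(y > a)$, a case analysis on $a \le 1$ versus $a > 1$ (using $1/(tx+1)^{\theta+1} \le 1$ in the first case and $1/(tx+1)^{\theta+1} \le 1/a^{\theta+1}$ on the support $\{tx > a-1\}$ in the second) gives the uniform bound $\|R\|_{\R_+} \le \theta^2/(\theta+1)$, and a Jordan-type decomposition of the signed derivative $R'$ produces $R = u_+^{(0)} - u_-^{(0)}$ with each $u_\pm^{(0)}$ non-negative, non-increasing, and bounded by $\|R\|_{\R_+}$. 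Since $\mathcal{L}$ is a contraction of norm $\theta/(\theta+1)$ that preserves non-negativity and monotonicity (as $tx+1$ is increasing in $x$), the identity $f_2' = \sum_{k \ge 0} \mathcal{L}^k R$ decomposes as $u_+ - u_-$ with $u_\pm := \sum_{k \ge 0} \mathcal{L}^k u_\pm^{(0)}$ both non-negative, non-increasing, and $\|u_\pm\|_{\R_+} \le (\theta^2/(\theta+1)) \cdot (\theta+1) = \theta^2$. The main technical task is verifying the Jordan decomposition of $R$ uniformly in $a$ and propagating the monotonicity through each iterate of $\mathcal{L}$.
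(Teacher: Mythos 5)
Your part (a) and the verification that $f=\A_{\theta,x}g_a$ solves \eqref{eq:fstein} follow essentially the paper's own route: the same probabilistic representation of the iterates with a coupled copy of $D_\theta$, the same reduction of the indicator case to the Lipschitz case via $\A_{\theta,x+1}h_a=\min\{1,a^\theta/(x+1)^\theta\}$, and the same differentiation identity for $x\mapsto \A_{\theta,x}g$. Where you genuinely diverge is the decomposition $f_2'=u_+-u_-$. The paper never forms a fixed-point equation: it differentiates each summand $\A_{\theta,x}\widetilde h^{(\star k)}$ directly, using its explicit form as an expectation of $\min\{1,m_k(x)^\theta\}$, and reads off the derivative as $-s_k^-+s_k^+$ with $s_k^\pm$ manifestly non-negative, non-increasing and bounded by $\theta(\theta/(\theta+1))^k$; summing in $k$ gives \eqref{bound:f2} immediately, with no shape analysis of any auxiliary function. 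Your route instead uses $\psi(x)=\A_{\theta,x+1}g_a=f(x+1)$ to get $f_2'=\mathcal{L}f_2'+R$ and solves by a Neumann series; the operator facts you invoke are correct ($\|\mathcal{L}\|\le\theta/(\theta+1)$, preservation of non-negativity and monotonicity), the bound $\|R\|_{\R_+}\le\theta^2/(\theta+1)$ is right, and the constant $(\theta^2/(\theta+1))(\theta+1)=\theta^2$ comes out as claimed. To legitimize the series identity $f_2'=\sum_{k\ge0}\mathcal{L}^kR$ you should record that $f_2'$ is bounded; this follows from $f_2'(x)=(\theta/x)\big(\psi(x)-\A_{\theta,x}\psi\big)$ and $\psi\in{\rm Lip}_{\theta(1+\theta)}$, which give $|f_2'|\le\theta^2$.

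The step you flag but do not carry out, namely $R=u_+^{(0)}-u_-^{(0)}$ with \emph{both} pieces non-negative, non-increasing and bounded by $\|R\|_{\R_+}$, is exactly where the work sits in your approach: a generic Jordan decomposition only controls the pieces by the variation of $R$, not by its supremum, so the claimed constant $\theta^2$ needs an argument. It does hold, because $-R$ is unimodal. Indeed, substituting $s=tx$ and writing $b=\max\{a-1,0\}$, for $x>b$ one has
\begin{equation*}
-R(x)=\theta^2a^\theta x^{-\theta-1}\int_{b}^{x}s^\theta(s+1)^{-\theta-1}\,ds ,
\end{equation*}
and $-R\equiv0$ on $[0,b]$; the derivative of the right-hand side has the sign of $H(x):=x^{\theta+1}(x+1)^{-\theta-1}-(\theta+1)\int_b^x s^\theta(s+1)^{-\theta-1}ds$, and $H'(x)=-(\theta+1)x^{\theta+1}(x+1)^{-\theta-2}<0$, so $H$ changes sign at most once. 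Hence for $a\le1$ the function $-R$ is non-increasing, and for $a>1$ it vanishes on $[0,a-1]$, increases to a single maximum and then decreases. Taking $u_-^{(0)}(x)=\sup_{y\ge x}(-R(y))$ and $u_+^{(0)}=u_-^{(0)}+R$ then gives the required decomposition with $\|u_\pm^{(0)}\|_{\R_+}\le\|R\|_{\R_+}$, which closes your argument (the propagation of monotonicity through $\mathcal{L}$ is, as you say, immediate). In comparison, the paper's term-by-term differentiation buys the monotone split for free in each summand, while your fixed-point formulation is more compact once this unimodality of $|R|$ is supplied; with only a crude variation bound in place of it you would still get uniform bounds on $u_\pm$, but with a constant larger than the $\theta^2$ stated in \eqref{bound:f2}.
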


It should be noted that Theorem~\ref{thm.bdkol}(a) was shown for Lipschitz functions in \cite[Theorem~4.1]{BG17}, in particular, it was proved that the function $g$ is Lipschitz. In Theorem~\ref{thm.bdkol}(b), for $h\in\mathcal{H}_K$ we provide a solution $f$ to the Stein equation \eqref{eq:fstein}, which splits into two parts $f_1$ and $f_2$, with $f_2$ having sufficient smoothness properties and a derivative that is a difference of two monotone functions satisfying \eqref{bound:f2}, while the other part $f_1$ is differentiable at all but one point. In the proofs of our main results, we study the contributions coming from $f_1$ and $f_2$ separately by using the explicit form of $f_1$ and the properties of $f_2'$, respectively.

The rest of the paper is organized as follows. In Section~\ref{sec.kol.stein}, we prove Theorem~\ref{thm.bdkol}. We apply Theorem~\ref{thm.bdkol} to provide a general upper bound for certain weighted Bernoulli sums in Theorem~\ref{thm.ber'} in Section~\ref{sec:upperbound}, which is then used to obtain the upper bounds in Theorems~\ref{thm.ber} and \ref{thm:Poi}. A general lower bound result for weighted Bernoulli and Poisson sums is proved in Proposition~\ref{prop:optimality} in Section~\ref{sec:lowerbound}, as consequence of which, we prove the optimality results in Theorems~\ref{thm.ber} and \ref{thm:Poi}. Finally, in Section~\ref{sec:mainresults}, we establish the Corollaries~\ref{cor:Q} and \ref{cor:IT}, and Theorem \ref{thm:imp}.

\section{Solution to the Stein equation}\label{sec.kol.stein} 
This section is dedicated to the proof of Theorem~\ref{thm.bdkol}. We will apply the following result from \cite{Go17v1}.

\begin{prop}[{\cite[Theorem 3.1]{Go17v1}}]\label{prop.go1}
For $\theta>0$ and $v \in {\rm Lip}_1$ with $\E\,[v (D_\theta)]=0$, the function
$$
w(x) = \sum_{k\in \N_0} \A_{\theta,x+1}^kv, \quad x\in \R_+,
$$
is a ${\rm Lip}_{1+\theta}$ solution to the equation
$$
w(x)-\A_{\theta,x+1}w=v(x), \qm{$x \in \R_+$}.
$$
\end{prop}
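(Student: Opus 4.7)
The plan is to represent each iterate $\A_{\theta,x+1}^k v$ probabilistically as an expectation of $v$ along a Markov chain whose invariant law is $D_\theta$, and to exploit a coupling that contracts at the geometric rate $\E[U^{1/\theta}] = \theta/(\theta+1)$. I would take i.i.d.\ variables $U_1, U_2, \ldots \sim \mathbb{U}[0,1]$ and define the chain $Y_0(x) = x$ and $Y_k(x) = U_k^{1/\theta}(Y_{k-1}(x)+1)$ on $\R_+$. A straightforward induction based on \eqref{def.A} and Fubini's theorem yields
\begin{equation*}
\A_{\theta,x+1}^k v = \E[v(Y_k(x))] \qquad \text{for all } k \in \N_0,\; x \in \R_+.
\end{equation*}
By \eqref{def:Dickman}, the distribution $D_\theta$ is invariant for this chain; fixing a stationary copy $D_\theta^{(0)} \sim D_\theta$ independent of $(U_k)_{k \ge 1}$ and setting $D_\theta^{(k)} := U_k^{1/\theta}(D_\theta^{(k-1)}+1)$ yields $D_\theta^{(k)} \stackrel{d}{=} D_\theta$ for every $k$.

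From the coupled recursion one gets $Y_k(x) - D_\theta^{(k)} = U_k^{1/\theta}(Y_{k-1}(x) - D_\theta^{(k-1)})$, hence $|Y_k(x) - D_\theta^{(k)}| = (U_1 \cdots U_k)^{1/\theta}|x - D_\theta^{(0)}|$ almost surely. Using $v \in {\rm Lip}_1$ together with $\E[v(D_\theta)] = 0$ and independence,
\begin{equation*}
|\A_{\theta,x+1}^k v| = \bigl|\E[v(Y_k(x)) - v(D_\theta^{(k)})]\bigr| \le \left(\frac{\theta}{\theta+1}\right)^k \E[|x - D_\theta|],
\end{equation*}
and the right-hand side is finite (as $\E[D_\theta] = \theta < \infty$, which follows directly from the fixed-point relation) and summable in $k$, establishing absolute convergence of the series defining $w$ at each $x$. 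For the Lipschitz bound, a synchronous coupling of two initial points $x, y$ through the same $(U_k)$ gives $|Y_k(x) - Y_k(y)| = (U_1 \cdots U_k)^{1/\theta}|x-y|$, so the $k$-th summand of $w$ is $(\theta/(\theta+1))^k$-Lipschitz; summing the resulting geometric series gives $|w(x) - w(y)| \le (1+\theta)|x-y|$, i.e.\ $w \in {\rm Lip}_{1+\theta}$.

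For the equation itself, writing $S_N(x) := \sum_{k=0}^N \A_{\theta,x+1}^k v$, linearity of $\A_{\theta,x+1}$ gives the telescoping identity
\begin{equation*}
S_N(x) - \A_{\theta,x+1} S_N = v(x) - \A_{\theta,x+1}^{N+1} v.
\end{equation*}
Letting $N \to \infty$, the subtracted term on the right vanishes by the contraction estimate from the previous paragraph, and $S_N(x) \to w(x)$ pointwise. Dominated convergence gives $\A_{\theta,x+1} S_N \to \A_{\theta,x+1} w$, since $U^{1/\theta}(x+1)$ almost surely lies in the compact set $[0, x+1]$ on which $|w - S_N|$ is bounded by a geometric remainder in $N$ times a constant depending only on $x$. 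Passing to the limit gives $w(x) - \A_{\theta,x+1} w = v(x)$ on $\R_+$. The only delicate point I anticipate is that the bound $\E[|x - D_\theta|]$ on $|\A_{\theta,x+1}^k v|$ grows with $x$, but this is harmless for the interchange since for each fixed $x$ one only needs domination on the bounded range of $U^{1/\theta}(x+1)$.
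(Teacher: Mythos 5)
Your proof is correct. Note that the paper does not prove this proposition itself but imports it from \cite[Theorem~3.1]{Go17v1}; your Markov-chain coupling argument (the nested representation of $\A_{\theta,x+1}^k v$, centering via the fixed-point identity \eqref{def:Dickman}, and the geometric contraction factor $\E[U^{1/\theta}]^k=(\theta/(\theta+1))^k$, followed by the telescoping identity for partial sums) is exactly the computation the paper performs in the proof of Theorem~\ref{thm.bdkol}(a) around \eqref{eq:tildeh}, so it is essentially the same approach and is complete.
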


We will also make use of the fact (see e.g.\ \cite[Proposition~3(v)]{PW04}) that for $\theta>0$, one has $\E[D_\theta]=\theta$. We now present the proof of Theorem~\ref{thm.bdkol}.

\begin{proof}[Proof of Theorem~\ref{thm.bdkol}] 
Throughout this proof, we let $\widetilde h(x)=h(x)-\E\,[h(D_\theta)]$, $x\in \R_+$, denote the centered version of $h$. Since $\theta$ remains fixed, for convenience, we will write $\A_x$ for for the averaging operator $\A_{\theta,x}$ defined at \eqref{def.A}. For $k\in\N_0$, we define
$$
\widetilde h^{(\star k)}(x)=\A_{x+1}^k \widetilde h, \quad x\in \R_+,
$$ 
with the convention $\widetilde h^{(\star 0)}(x)=\A_{x+1}^0 \widetilde h=\widetilde h(x)$ for $x\in \R_+$.

We start by proving Theorem~\ref{thm.bdkol}(a). First we consider $h \in \cup_{\alpha>0} {\rm Lip}_\alpha$. Notice, it suffices to show the result for $h\in {\rm Lip}_1$. In this case, for $k\in \N$ and $x\in \R_+$, using the definition of $\widetilde h^{(\star k)}$ and \eqref{def.A} for the first step and repeating this $k$ times, we have that
\begin{align}\label{eq:tildeh}
\widetilde h^{(\star k)}(x)  &= \E \left[\widetilde h^{(\star (k-1))}(U_1^{1/\theta} (1+x))\right]= \mathbf{E} \left[ \widetilde{h}(U_1^{1/\theta} (1 + U_2^{1/\theta} (1+\cdots + U_k^{1/\theta} (1+x) )  )) \right]\\
& \qquad = \mathbf{E} \left[ h(U_1^{1/\theta} (1 + U_2^{1/\theta} (1+\cdots + U_k^{1/\theta} (1+x) )  )) \right] - \mathbf{E}\left[h(D_\theta)\right] \nonumber\\
& \qquad = \mathbf{E} \left[ h(U_1^{1/\theta} (1 + U_2^{1/\theta} (1+\cdots + U_k^{1/\theta} (1+x) )  )) \right] \nonumber\\
& \qquad \qquad\qquad \qquad - \mathbf{E} \left[ h(U_1^{1/\theta} (1 + U_2^{1/\theta} (1+\cdots + U_k^{1/\theta} (1+D_\theta) )  ))\right]\nonumber
\end{align}
with independent $U_1,\dots,U_k \sim \mathbb{U}[0,1]$ and $D_\theta$, where the last step is due to \eqref{def:Dickman}. Thus from the Lipschitz property of $h$ and that $\E[D_\theta]=\theta$, it follows that for $k \in \N$,
$$
|\widetilde h^{(\star k)}(x)| \leq \mathbf{E} \left[ U_1^{1/\theta}\cdots U_k^{1/\theta} |x-D_\theta| \right] \leq \bigg( \frac{\theta}{\theta+1} \bigg)^k (x+\mathbf{E}\left[D_\theta\right])=\bigg( \frac{\theta}{\theta+1} \bigg)^k (x+\theta).
$$
Summing, we obtain for $x \in\R_+$ that $\sum_{k  \in\N_0} |\widetilde h^{(\star k)}(x)| \le |\widetilde h(x)| + \theta(x+\theta)$, proving the absolute convergence of the series $g$ in \eqref{def:hstar.g}. 
	
Now let $h=\mathds{1}(\bdot \le a) \in \mathcal{H}_K$ with $a \in \R_+$. For $x \in \R_+$, we can write
\begin{equation}\label{eq:g}
g(x)=\widetilde h(x) + \sum_{k \in \N} \widetilde h^{(\star k)}(x) =\widetilde h (x) + \sum_{k \in \N_0} \A_{x+1}^{k}(\A_{x+1} \widetilde h).
\end{equation}
Moreover, by \eqref{def.A} we have
\begin{align*}
\A_{x+1}\widetilde h &=(x+1)^{-\theta}\int_0^{x+1}(\mathds{1}(u \le a) -\Prob{D_\theta \le a})\theta u^{\theta-1} du\\
&=\min\{1,a^\theta/(x+1)^\theta\}-\Prob{D_\theta \le a}.
\end{align*}
It is now easy to see that the function $\A_{x+1}\widetilde h$ is Lipschitz with Lipschitz constant at most $\theta$. Arguing as above, this yields that $\sum_{k \in \N} \A_{x+1}^{k}\widetilde h$ is absolutely convergent, proving the same for $g$.
	
Next we show Theorem~\ref{thm.bdkol}(b). For economy of notation, we fix $a \in \R_+$ and drop the subscript $a$ from $h_a$, $\widetilde h_a$ and $g_a$. Notice, $\E \,[\A_{D_\theta+1}\widetilde h]=0$ by Proposition \ref{prop.go}(b). Since $\A_{x+1}\widetilde h$ is Lipschitz with Lipschitz constant at most $\theta$, we have $\theta^{-1}\A_{x+1}\widetilde h \in {\rm Lip}_1$. Denoting $\overline{g} =g-\widetilde h$, notice by \eqref{eq:g} and Proposition \ref{prop.go1} that $\theta^{-1} \overline{g} \in {\rm Lip}_{1+\theta}$ and
	$$\overline{g}(x)- \A_{x+1} \overline{g} =\A_{x+1}\widetilde h, \quad x\in \R_+.$$
This implies
$$
g(x)- \A_{x+1}g = \overline{g}(x)- \A_{x+1}\overline{g} + \widetilde{h}(x)- \A_{x+1}\widetilde{h} = \A_{x+1}\widetilde{h} + \widetilde{h}(x)- \A_{x+1}\widetilde{h} = \widetilde{h}(x)
$$
for $x \in\R_+$. Since $g$ is continuous on $\R_+ \setminus\{a\}$, it is straightforward to check upon differentiation that $f(x)=\A_x g$ satisfies $f'(x)=(\theta/x) (g(x)- \A_xg)$ for $x\in \R_+ \setminus \{a\}$. Combining the two previous facts leads to 
$$
(x/\theta) f'(x)+ f(x)-f(x+1) = g(x) - \A_xg + \A_xg - \A_{x+1}g = g(x) - \A_{x+1}g =\widetilde{h}(x)
$$
for $x\in \R_+ \setminus\{a\}$, proving that $f$ solves the Stein equation \eqref{eq:fstein}.
		
For the second assertion in Theorem~\ref{thm.bdkol}(b), write $f(x)=\A_x g$ as
\begin{equation*}
f(x)=\A_x \sum_{k \in \N_0} \widetilde h^{(\star k)} = \sum_{k \in \N_0} \A_x \widetilde h^{(\star k)}=\A_x \widetilde h + \sum_{k \in \N} \A_x \widetilde h^{(\star k)}=: f_1(x) + f_2(x),
\end{equation*}
where the second equality (exchanging $\A_x$ and the sum) follows by a standard argument where one considers partial sums over compact sets and takes a limit (see e.g.\ the proof of \cite[Theorem 4.9]{BG17}). Evaluating $f_1$ and taking the derivative, we obtain
\begin{equation*}
f_1(x)=\min\{1,a^\theta/x^\theta\} - \Prob{D_\theta \le a}, \,x \in \R_+, \qmq{and} f_1'(x)=-\theta a^\theta\mathds{1}(x>a)/x^{\theta+1}, \, x \in \R_+\setminus\{a\}.
\end{equation*}
For $(U_i)_{i \in \N}$ a sequence of i.i.d.\ $\mathbb{U}[0,1]$-distributed random variables, for $k \in \N$, by \eqref{eq:tildeh} we have
\begin{equation*}
\widetilde h^{(\star k)}(x)=A_{x+1}^k \widetilde h = \E\left[\widetilde h\left(U_1^{1/\theta}\left(1+U_2^{1/\theta}\left(1+\cdots + U_k^{1/\theta}(1+x)\right)\right)\right)\right].
\end{equation*}
Hence for $U \sim \mathbb{U}[0,1]$ independent of $(U_i)_{i \in \N}$, by \eqref{def.A} we obtain 
\begin{align*}
A_x \widetilde h^{(\star k)}=&\E\left[\widetilde h\left(U_1^{1/\theta}\left(1+U_2^{1/\theta}\left(1+\cdots + U_k^{1/\theta}(1+U^{1/\theta}x)\right)\right)\right)\right]\nonumber\\
=&\E \left[h\left(U_1^{1/\theta}\left(1+U_2^{1/\theta}\left(1+\cdots + U_k^{1/\theta}(1+U^{1/\theta}x)\right)\right)\right)\right] - \E\, [h(D_\theta)]\nonumber\\
=&\E\left[ \E \left[\mathds{1}\left\{U_1^{1/\theta} \le \frac{a}{1+U_2^{1/\theta}(1+\cdots + U_k^{1/\theta}(1+U^{1/\theta}x))}\right\} \Bigg| U,(U_i)_{i \ge 2}\right]\right]-\E\, [h(D_\theta)] \nonumber\\
=&\E\left[\min\left\{1, \left(\frac{a}{1+U_2^{1/\theta}\left(1+\cdots + U_k^{1/\theta}(1+U^{1/\theta}x)\right)}\right)^\theta\right\}\right]-\E\, [h(D_\theta)].
\end{align*}
For notational simplicity, let us denote
$$m_k(x)=\frac{a}{1+U_2^{1/\theta}\left(1+\cdots + U_k^{1/\theta}(1+U^{1/\theta}x)\right)}$$
for $k\in \N$, where we note that $m_1(x)=a/(1+U^{1/\theta}x)$. Noting that the expectation and derivative can be interchanged, we have for each $k \in \N$,
\begin{align}\label{eq:der}
\frac{d}{dx}A_x \widetilde h^{(\star k)}
& = -\E\left[\mathds{1}(m_k(x)<1) \frac{\theta m_k(x)^\theta(UU_2\dots U_k)^{1/\theta}}{1+U_2^{1/\theta}\left(1+\cdots + U_k^{1/\theta}(1+U^{1/\theta}x)\right)}\right] \nonumber\\
& = -\E\left[ \frac{(\mathds{1}(m_k(x)<1) m_k(x)^\theta + \mathds{1}(m_k(x)\geq 1) ) \theta (UU_2\dots U_k)^{1/\theta}}{1+U_2^{1/\theta}\left(1+\cdots + U_k^{1/\theta}(1+U^{1/\theta}x)\right)}\right] \nonumber\\
& \quad +\E\left[ \frac{\mathds{1}(m_k(x)\geq 1)  \theta (UU_2\dots U_k)^{1/\theta}}{1+U_2^{1/\theta}\left(1+\cdots + U_k^{1/\theta}(1+U^{1/\theta}x)\right)}\right] = : -s_k^{-}(x) +  s_k^{+}(x).
\end{align}
For $k\in \N$ we have
$$
\E\left[ (UU_2\dots U_k)^{1/\theta} \right] = \E\left[ U^{1/\theta} \right]^k \leq \left(\frac{\theta}{\theta+1}\right)^{k}
$$
so that for $x \in \R_+$,
$$
|s_k^{-}(x)| \leq \theta \left(\frac{\theta}{\theta+1}\right)^{k} \quad \text{and} \quad |s_k^{+}(x)| \leq \theta \left(\frac{\theta}{\theta+1}\right)^{k}.
$$
Since $f_2(x)=\sum_{k \in \N} \A_x \widetilde h^{(\star k)}$, this and \eqref{eq:der} imply
$$
f_2'(x) = -\sum_{k\in \N} s_k^{-}(x) + \sum_{k\in \N} s_k^{+}(x) =: -u_{-}(x) + u_{+}(x) 
$$
with
$$
\|u_{-}\|_{\R_+}\leq \theta^2 \quad \text{and} \quad \|u_{+}\|_{\R_+}\leq \theta^2.
$$
As $s_k^{-}$ and $s_k^{+}$ are non-negative and non-increasing for all $k\in \N$, we see that $u_{-}$ and $u_{+}$ are also non-negative and non-increasing, proving the desired conclusion.
\end{proof}

\section{Upper bounds}\label{sec:upperbound}
The goal of this section is to prove Theorem~\ref{thm.ber'} below, which we use to derive the upper bounds in Theorems \ref{thm.ber} and \ref{thm:Poi} at the end of this section. For $\theta>0$ and $p_\theta$ the density of the generalized Dickman distribution with parameter $\theta$, one can show (see e.g.\ \cite[Equation (1.1)]{Pi16b}) that $p_\theta=(e^{-\theta \gamma}/\Gamma(\theta)) \varrho_\theta$ where $\Gamma(\bdot)$ is the Gamma function, $\gamma$ is the Euler-Mascheroni constant, and $\varrho_\theta$ satisfies
\begin{align}
&\varrho_{\theta} (x) = 0, \quad x \leq 0, \nonumber\\ 
&\varrho_{\theta} (x) = x^{\theta - 1} ,\quad 0 < x \leq 1, \label{eq:rho}\\ 
&x \varrho_{\theta}'(x) + (1 - \theta) \varrho_{\theta} (x) + \theta \varrho_{\theta} (x - 1) = 0 ,\quad  x > 1.\nonumber 
\end{align}
For $x\in[0,1]$ it follows from \eqref{eq:rho} that
\begin{equation}\label{eqn:P_D_theta}
\Prob{D_\theta \le x}=\frac{e^{-\theta \gamma}}{\Gamma(\theta)}\int_0^x t^{\theta-1} dt=\frac{e^{-\theta \gamma}x^\theta}{\Gamma(\theta) \theta}=\frac{e^{-\theta \gamma}x^\theta}{\Gamma(\theta+1)}.
\end{equation}
Below, we employ the fact (see e.g.\ \cite[Section~1]{Pi16b}) that there exists a constant $k_\theta>0$ such that 
\begin{equation}\label{eq:pbd}
p_\theta(x) \le \frac{k_\theta}{\Gamma(x+1)} \le k_\theta \,\text{ for } \; x \ge 1,
\end{equation} 
and in particular, we can take $k_1=e^{-\gamma}$. Note that for $\theta \ge 1$, by \eqref{eq:rho} and \eqref{eq:pbd}, we have
\begin{equation}\label{eq:pub}
	\|p_\theta\|_{\R_+} \le \max \left\{\frac{e^{-\theta \gamma}}{\Gamma(\theta)},k_\theta\right\} =: K_\theta.
\end{equation}
For $\theta>0$ and $s \in \R_+$, denote by $R_\theta(s)$ the function
\begin{equation*}
R_\theta(s) = \begin{cases} K_\theta, & \quad  \theta \ge 1,\\  s^{\theta-1}, & \quad \theta \in (0,1). \end{cases}
\end{equation*}
In the following, we will use the convention that $R_\theta(0) \cdot 0 =0$. Before presenting the proof of Theorem~\ref{thm.ber'}, we first prove a few preliminary lemmas.

\begin{lemma}\label{lem:bound_Dickman}
For $\theta>0$, $\alpha > 0$, $u \ge 0$ and $m\in\N_0$,
$$
\Prob{m \alpha \le D_\theta + u\le (m+1)\alpha} \le R_\theta(\alpha) \alpha.
$$
\end{lemma}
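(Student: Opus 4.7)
The plan is to rewrite the event as $\{a \le D_\theta \le a+\alpha\}$ with $a:=m\alpha - u \in \mathbb{R}$, and then bound $\P(a\le D_\theta\le a+\alpha)$ uniformly in $a$ by $R_\theta(\alpha)\alpha$.

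For $\theta \ge 1$, this is immediate: the uniform density bound $\|p_\theta\|_{\mathbb{R}_+}\le K_\theta$ from \eqref{eq:pub} gives
\[
\P(a\le D_\theta\le a+\alpha)\;=\;\int_a^{a+\alpha} p_\theta(x)\,dx\;\le\; K_\theta\,\alpha\;=\;R_\theta(\alpha)\,\alpha.
\]

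For $\theta\in(0,1)$ the density is unbounded near $0$, so I would first establish that $p_\theta$ is non-increasing on $(0,\infty)$ and thus reduce to the worst case $a=0$. On $(0,1]$, monotonicity follows from the explicit form $p_\theta(x)=(e^{-\theta\gamma}/\Gamma(\theta))\,x^{\theta-1}$ since $\theta-1<0$. On $(1,\infty)$, the ODE in \eqref{eq:rho} rearranges to
\[
\varrho_\theta'(x)\;=\;-\frac{(1-\theta)\,\varrho_\theta(x)+\theta\,\varrho_\theta(x-1)}{x},
\]
which is non-positive because $1-\theta>0$ and $\varrho_\theta\ge 0$; since $p_\theta$ is a positive multiple of $\varrho_\theta$, the monotonicity transfers to $p_\theta$. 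A short case check then yields $\P(a\le D_\theta\le a+\alpha)\le \P(0\le D_\theta\le \alpha)$ for every $a\in\mathbb{R}$: when $a\ge 0$, the derivative of $a\mapsto\int_a^{a+\alpha} p_\theta$ equals $p_\theta(a+\alpha)-p_\theta(a)\le 0$, and when $a<0$, the integrand already vanishes on $(a,0)$ while $a+\alpha<\alpha$.

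It remains to bound $\P(0\le D_\theta\le \alpha)$ by $\alpha^\theta=R_\theta(\alpha)\alpha$. If $\alpha\ge 1$ the bound is trivial since $\alpha^\theta\ge 1$. If $\alpha<1$, formula \eqref{eqn:P_D_theta} gives $\P(D_\theta\le\alpha)=\frac{e^{-\theta\gamma}}{\Gamma(\theta+1)}\alpha^\theta$, and evaluating the same identity at $\alpha=1$ yields $\frac{e^{-\theta\gamma}}{\Gamma(\theta+1)}=\P(D_\theta\le 1)\le 1$, which absorbs the prefactor and closes the estimate. The only mildly delicate step is the monotonicity of $p_\theta$ on $(1,\infty)$, but as indicated this drops out of the defining ODE for $\varrho_\theta$; everything else is routine.
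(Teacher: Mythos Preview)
Your proof is correct and follows essentially the same approach as the paper: for $\theta\ge 1$ you both use the uniform density bound \eqref{eq:pub}, and for $\theta\in(0,1)$ you both establish that $\varrho_\theta$ (hence $p_\theta$) is non-increasing via the defining ODE, reduce to $\Prob{0\le D_\theta\le\alpha}$, and then invoke \eqref{eqn:P_D_theta} together with $e^{-\theta\gamma}/\Gamma(\theta+1)=\Prob{D_\theta\le 1}\le 1$. The only cosmetic differences are your explicit substitution $a=m\alpha-u$ and the derivative-in-$a$ argument for the reduction step, where the paper simply asserts the inequality directly from monotonicity.
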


\begin{proof}
By \eqref{eq:pub}, for $\theta\geq 1$, $\alpha>0$, $u \ge 0$ and $m \in \N_0$,
	$$\Prob{m \alpha \le D_\theta + u \le (m+1)\alpha} \le K_\theta \alpha.$$
	For $\theta\in(0,1)$, from \eqref{eq:rho}, one can see that $\varrho_\theta$ is decreasing on $(0,\infty)$; indeed, it is trivially so on $(0,1]$ and for $x>1$ one has that $\varrho_{\theta}'(x)=  (\theta-1) \varrho_{\theta} (x)/x - (\theta/x) \varrho_{\theta} (x - 1) < 0$. Hence for $\alpha \in (0,1]$, $u \ge 0$ and $m \in \N_0$, we obtain
$$
\Prob{m \alpha \le D_\theta + u \le (m+1)\alpha} \le \Prob{0 \le D_\theta \le \alpha} = \frac{e^{-\theta\gamma}}{\Gamma(\theta+1)}\alpha^\theta \le \alpha^\theta,
$$
where we used \eqref{eqn:P_D_theta} and that $e^{-\theta\gamma}/ \Gamma(\theta+1) = \Prob{D_\theta \le 1} \le 1$. This is also trivially true for $\alpha >1$. Combining the inequalities for the cases $\theta \ge 1$ and $\theta \in (0,1)$ proves the assertion.
\end{proof}

For $\theta>0$ and independent non-negative random variables $S$ and $\Delta$, it follows from Lemma~\ref{lem:bound_Dickman} upon conditioning on $\Delta$ that
	\begin{align}
	d_K(S+\Delta,D_\theta) &\le \sup_{t \in \R} |\Prob{S \le t-\Delta} - \Prob{D_\theta \le t-\Delta}| + \sup_{t \in \R} |\Prob{D_\theta \le t-\Delta} - \Prob{D_\theta \le t}| \nonumber\\
		& \le d_K(S,D_\theta) + \E R_\theta(\Delta)\Delta \le d_K(S,D_\theta) + \widetilde{R} \label{eqn:d_K_approximation}
	\end{align}
with
$$
\widetilde{R}:= \begin{cases} K_\theta \mathbf{E}\Delta, & \quad \theta\ge 1, \\ (\mathbf{E}\Delta)^\theta, & \quad \theta\in(0,1), \end{cases}
$$
where we assumed independence of $\Delta$ and $D_\theta$, and used Jensen's inequality for $\theta\in(0,1)$.

The following lemma shows that for the Dickman approximation in Kolmogorov distance, it is enough to consider test functions $h(\bdot)=\mathds{1}(\bdot \le a)$ with $a \ge a_0$ for some $a_0\geq 0$ if one adds an error term that depends only on $a_0$.

\begin{lemma}\label{lem:lb}
For $\theta>0$ and a non-negative random variable $W$, assume that there exist $a_0,b\geq 0$ such that
\begin{equation}\label{eqn:Assumption_u_large}
\sup_{a\geq a_0} |\Prob{W \leq a} - \Prob{D_\theta\leq a}|\leq b.
\end{equation}
Then
$$
d_K(W,D_\theta) \leq R_\theta(a_0)a_0 + b.
$$
\end{lemma}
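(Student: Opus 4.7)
The plan is to split the supremum defining $d_K(W,D_\theta)$ into the regime $a\geq a_0$, which is directly controlled by the hypothesis \eqref{eqn:Assumption_u_large}, and the regime $a\in[0,a_0)$, which must be handled separately. For the latter I will bound $\Prob{W\leq a}$ and $\Prob{D_\theta\leq a}$ individually, rather than try to compare them, and then use the trivial fact that if two non-negative numbers are each $\le M$, their difference in absolute value is $\le M$.

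The key input for the $a\in[0,a_0)$ bound is Lemma~\ref{lem:bound_Dickman}. Taking $m=0$, $u=0$ and $\alpha=a_0$ there gives $\Prob{D_\theta\leq a_0}\leq R_\theta(a_0)\,a_0$. By monotonicity of the distribution function of $D_\theta$, this immediately yields
\[
\Prob{D_\theta\leq a}\leq \Prob{D_\theta\leq a_0}\leq R_\theta(a_0)\,a_0 \qquad\text{for every } a\in[0,a_0).
\]
Next, applying the hypothesis \eqref{eqn:Assumption_u_large} at the point $a_0$ and using monotonicity of $a\mapsto \Prob{W\leq a}$, I get for any $a\in[0,a_0)$,
\[
\Prob{W\leq a}\leq \Prob{W\leq a_0}\leq \Prob{D_\theta\leq a_0}+b \leq R_\theta(a_0)\,a_0 + b.
\]
Thus both $\Prob{W\leq a}$ and $\Prob{D_\theta\leq a}$ lie in the interval $[0,R_\theta(a_0)a_0+b]$, which forces $|\Prob{W\leq a}-\Prob{D_\theta\leq a}|\leq R_\theta(a_0)a_0+b$ on $[0,a_0)$.

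Finally, for $a\geq a_0$ the assumption \eqref{eqn:Assumption_u_large} gives $|\Prob{W\leq a}-\Prob{D_\theta\leq a}|\leq b\leq R_\theta(a_0)a_0+b$, so combining the two regimes and taking the supremum over $a\in\R_+$ yields the claimed bound on $d_K(W,D_\theta)$. No real obstacle arises; the only point to be mindful of is that Lemma~\ref{lem:bound_Dickman} has to do the work in both cases $\theta\geq 1$ (where it essentially records $\|p_\theta\|_{\R_+}\leq K_\theta$) and $\theta\in(0,1)$ (where $R_\theta(a_0)a_0=a_0^\theta$ comes from the exact expression in \eqref{eqn:P_D_theta}), and the convention $R_\theta(0)\cdot 0=0$ is used to make the statement cover the trivial case $a_0=0$.
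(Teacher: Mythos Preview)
Your proof is correct and follows essentially the same approach as the paper: split into the regimes $a\geq a_0$ (handled by the hypothesis) and $a\in[0,a_0)$, then for the latter use monotonicity of the distribution functions together with the hypothesis at $a=a_0$ and Lemma~\ref{lem:bound_Dickman} with $m=0$, $u=0$, $\alpha=a_0$. The only cosmetic difference is that the paper bounds $|\Prob{W\leq a}-\Prob{D_\theta\leq a}|$ by $\max\{\Prob{W\leq a},\Prob{D_\theta\leq a}\}$ and then controls this maximum, whereas you bound each term separately and use that two numbers in $[0,M]$ differ by at most $M$; these are equivalent elementary manipulations.
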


\begin{proof}
For $a \in [0, a_0)$, we have
\begin{align*}
|\mathbf{P}\{W \leq a\} - \mathbf{P}\{D_\theta\leq a\}| & \leq \max\{\mathbf{P}\{W \leq a\}, \mathbf{P}\{D_\theta\leq a\} \}\leq \max\{\mathbf{P}\{W \leq a_0\}, \mathbf{P}\{D_\theta\leq a_0\}\}\\
& \leq \mathbf{P}\{D_\theta\leq a_0\} + \big| \mathbf{P}\{W \leq a_0\}) - \mathbf{P}\{D_\theta\leq a_0\} \big|
\\
& \leq  \mathbf{P}\{D_\theta\leq a_0\} + b \leq R_\theta(a_0) a_0 + b,
\end{align*}
where in the penultimate and the last step, we have used \eqref{eqn:Assumption_u_large} and Lemma \ref{lem:bound_Dickman} with $\alpha=a_0$, $u=0$ and $m=0$, respectively. Combining this with \eqref{eqn:Assumption_u_large} for $a\geq a_0$ completes the proof.
\end{proof}

In the sequel, we will often use that for a non-negative random variable $W$ and $0 \le a <b <\infty$, 
\begin{equation}\label{eq:Kdis}
	 \Prob{a \le W \le b} \le \Prob{a \le D_\theta \le b} + 2 d_K(W, D_\theta).
\end{equation}

\begin{lemma}\label{lem:mom}
Let $\theta>0$ and let $W$ be a non-negative random variable.
\begin{enumerate}[(a)]
\item [(a)] For $\alpha>0$ and $u \ge 0$,
$$
\Prob{W + u \leq \alpha} \leq \left[R_\theta(\alpha)\alpha+d_K(W,D_\theta) \right] \mathds{1}(\alpha \ge u).
$$
\item [(b)] For $t>0$, $\alpha>0$ and $u \ge 0$,
\begin{equation*}
\E \left[\frac{\alpha^t}{(W+u)^{1+t}}\mathds{1}(W +u>\alpha)\right] \le \zeta(1+t)	\left( R_\theta (\max\{\alpha, u\}) + \frac{2d_K(W,D_\theta)}{\max\{\alpha, u\}}\right),
\end{equation*}
where $\zeta(k)=\sum_{m=1}^\infty m^{-k}$ for $k >1$ is the Riemann zeta function.
\end{enumerate}
\end{lemma}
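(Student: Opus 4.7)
\emph{Plan of proof.}
For part (a), I would first dispose of the trivial case $\alpha<u$: here $W+u\geq u>\alpha$, so $\mathbf{P}\{W+u\leq\alpha\}=0$, which matches the right-hand side since $\mathds{1}(\alpha\geq u)=0$. For $\alpha\geq u$, I would rewrite $\mathbf{P}\{W+u\leq\alpha\}=\mathbf{P}\{W\leq\alpha-u\}$ and bound it via the definition of $d_K$ as
$$
\mathbf{P}\{W\leq\alpha-u\}\leq \mathbf{P}\{D_\theta\leq\alpha-u\}+d_K(W,D_\theta)=\mathbf{P}\{0\leq D_\theta+u\leq\alpha\}+d_K(W,D_\theta).
$$
Applying Lemma~\ref{lem:bound_Dickman} with $m=0$ then yields $\mathbf{P}\{0\leq D_\theta+u\leq\alpha\}\leq R_\theta(\alpha)\alpha$, finishing part (a).

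For part (b), I would introduce $\beta:=\max\{\alpha,u\}$ and first make the key observation that on the event $\{W+u>\alpha\}$ one has $W+u\geq\beta$: indeed, if $\alpha\geq u$ then $W+u>\alpha=\beta$, while if $\alpha<u$ then $W+u\geq u=\beta$. This places $W+u$ inside the union $\bigsqcup_{m\geq 1}[m\beta,(m+1)\beta)$ on the indicator's support. I would then split the expectation accordingly:
$$
\E\left[\frac{\alpha^t}{(W+u)^{1+t}}\mathds{1}(W+u>\alpha)\right]\leq \sum_{m=1}^\infty \frac{\alpha^t}{(m\beta)^{1+t}}\,\mathbf{P}\{m\beta\leq W+u<(m+1)\beta\}.
$$

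Next I would bound each probability above by combining inequality~\eqref{eq:Kdis} (which produces the error term $2d_K(W,D_\theta)$) with Lemma~\ref{lem:bound_Dickman} applied to $D_\theta+u$ with shift-parameter $\beta$, giving
$$
\mathbf{P}\{m\beta\leq W+u<(m+1)\beta\}\leq R_\theta(\beta)\beta+2d_K(W,D_\theta).
$$
Substituting and factoring out the series $\sum_{m\geq 1}m^{-(1+t)}=\zeta(1+t)$ produces
$$
\zeta(1+t)\,\frac{\alpha^t}{\beta^{1+t}}\bigl[R_\theta(\beta)\beta+2d_K(W,D_\theta)\bigr]=\zeta(1+t)\left(\frac{\alpha^t}{\beta^t}R_\theta(\beta)+\frac{2\alpha^t}{\beta^{1+t}}d_K(W,D_\theta)\right).
$$
Since $\alpha\leq\beta$ we have $\alpha^t/\beta^t\leq 1$, and the stated bound follows.

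The only conceptual obstacle is the observation $W+u\geq\beta$ on the indicator's support, which is what makes the dyadic decomposition start at $m=1$ and thereby makes the $\zeta(1+t)$ sum converge; the remaining steps are just careful bookkeeping of $\alpha$ versus $\beta$ together with the already-prepared tools \eqref{eq:Kdis} and Lemma~\ref{lem:bound_Dickman}.
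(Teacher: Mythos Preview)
Your proposal is correct and follows essentially the same route as the paper: both parts reduce to the Kolmogorov-distance comparison with $D_\theta$ together with Lemma~\ref{lem:bound_Dickman}, and for (b) you use the same dyadic decomposition in the scale $\beta=\max\{\alpha,u\}$ (the paper calls it $v$), bounding each shell via \eqref{eq:Kdis} and summing the resulting $\zeta(1+t)$ series. The only difference is cosmetic: you spell out explicitly why $W+u\geq\beta$ on the indicator's support, whereas the paper leaves this implicit in its first displayed inequality.
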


\begin{proof}
For $\alpha>0$ and $u \ge 0$, from Lemma \ref{lem:bound_Dickman} with $m=0$, we deduce
$$
\Prob{W +u \leq \alpha} \leq  \left[\Prob{D_\theta + u \leq \alpha} + d_K(W,D_\theta)\right] \mathds{1}(\alpha \ge u) \le \left[R_\theta(\alpha) \alpha + d_K(W,D_\theta)\right] \mathds{1}(\alpha \ge u),
$$
which is the assertion in (a).

Next fix $\alpha>0$ and $u \ge 0$, and let $v=\max\{\alpha, u\}$. For $t>0$ using \eqref{eq:Kdis} and Lemma \ref{lem:bound_Dickman} for the third and the fourth steps, respectively, it follows that
\begin{align*}
\E \left[\frac{\alpha^t}{(W+u)^{1+t}}\mathds{1}(W + u>\alpha)\right] \le &\sum_{m=1}^\infty \E \left[\frac{\alpha^t}{(W+u)^{1+t}}\mathds{1}(m v\le W + u \le (m+1)v)\right]\\
\le &\sum_{m=1}^\infty \frac{\alpha^t}{m^{1+t} v^{1+t}} \Prob{mv\le W + u \le (m+1)v}\\
\le & \sum_{m=1}^\infty \frac{\alpha^t}{m^{1+t} v^{1+t}}(\Prob{mv \le D_\theta +u \leq (m+1)v} + 2d_K(W,D_\theta)) \\
\le &\sum_{m=1}^\infty \frac{1}{m^{1+t}v}(R_\theta(v)v +2d_K(W,D_\theta)) \\
= & \zeta(1+t)\left(R_\theta(v) + \frac{2d_K(W,D_\theta)}{v}\right),
\end{align*}
which proves the assertion (b).
\end{proof}

We will deduce the upper bounds for the Kolmogorov distances in Theorem \ref{thm.ber} and Theorem \ref{thm:Poi} from the following more general upper bound for weighted Bernoulli sums, which is the main result of this section. For $n \in \N$, let $\mathcal{R}_n$ denote the set of non-negative integer multiples of $1/n$, i.e.,
\begin{equation}\label{def:Rn}
	\mathcal{R}_n =\{ k/n : k \in \N_0\}.
\end{equation} 

\begin{theorem}\label{thm.ber'}
Let $l \in \mathbb{N}$, $\theta>0$ and $\tau \ge 0$ be fixed. Let $(Z_k)_{k\geq l}$ be independent random variables such that $Z_k=\sum_{i=1}^{m_k} B_k^{(i)}$ with independent $B_k^{(i)} \sim {\rm Ber}(p_k^{(i)})$, $i\in\{1,\hdots,m_k\}$, for some $m_k \in \N$ and $p_k^{(i)} \in [0,1]$, satisfying $\left|\sum_{i=1}^{m_k} p_k^{(i)} - \theta/k\right|\le \tau/k^2$. Define $W_n= n^{-1} \sum_{k=l}^n k Z_k$ for $n\geq l$. Then, there exists a constant $C \in (0,\infty)$ depending only on $\theta$ and $\tau$ such that for $n\geq l$,
$$
d_K(W_n,D_\theta) \le \begin{dcases} C \bigg(\frac{l + \tau \log (n/l)}{n}\bigg), & \quad \theta\geq 1,\\  \frac{C l^\theta}{n^\theta}, & \quad \theta \in (0,1). \end{dcases}
$$
\end{theorem}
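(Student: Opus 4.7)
The plan is to use Stein's method via the solution $f = f_1 + f_2$ from Theorem~\ref{thm.bdkol}(b). First I would reduce the problem using Lemma~\ref{lem:lb}: choosing $a_0 = l/n$, the term $R_\theta(l/n)\cdot(l/n)$ already equals $K_\theta l/n$ when $\theta \ge 1$ and $(l/n)^\theta$ when $\theta \in (0,1)$, so it suffices to bound $|\Prob{W_n \le a} - \Prob{D_\theta \le a}|$ by the same order uniformly over $a \ge l/n$. Fixing such an $a$, I would write $\Prob{W_n \le a} - \Prob{D_\theta \le a} = T_1 + T_2$ with $T_j = \E[(W_n/\theta) f_j'(W_n) + f_j(W_n) - f_j(W_n+1)]$, handling the single bad point $x = a$ at which $f_1'$ fails to exist by treating $a \in \mathcal{R}_n$ and $a \notin \mathcal{R}_n$ separately (using the discreteness of $W_n$).

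The central manipulation driving both estimates will be the Bernoulli swap identity $\E[B_k^{(i)} g(W_n)] = p_k^{(i)}\,\E[g(W_n^{(k,i)} + k/n)]$, with $W_n^{(k,i)} := W_n - k B_k^{(i)}/n$ independent of $B_k^{(i)}$. After summing over $k$ and $i$, invoking the hypothesis $\sum_i p_k^{(i)} = \theta/k + O(\tau/k^2)$, and noting that replacing $W_n^{(k,i)}$ by $W_n$ inside a bounded $g$ costs at most $O(p_k^{(i)} \|g\|_{\R_+})$ per term, I expect to obtain
\[
\E[(W_n/\theta) g(W_n)] = \frac{1}{n}\sum_{k=l}^n \E[g(W_n + k/n)] + O\!\left(\frac{\tau \log(n/l)}{n}\,\|g\|_{\R_+}\right),
\]
with the logarithmic factor arising from summing $k\cdot(\tau/k^2)/n$ over $k = l, \ldots, n$. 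To control $T_2$, I would apply this to $g = f_2'$ (bounded by $2\theta^2$ thanks to Theorem~\ref{thm.bdkol}(b)) and subtract the identity $\E[f_2(W_n+1) - f_2(W_n)] = \int_0^1 \E[f_2'(W_n+s)]\,ds$, so that what remains is the discrepancy between a Riemann sum and the integral. The integral piece on $[0, l/n]$ contributes at most $O(l/n)$, while on $[l/n, 1]$ the Riemann error is controlled by $(\|u_+\|_{\R_+} + \|u_-\|_{\R_+})/n = O(1/n)$ thanks to the monotonicity of $u_\pm$ in the decomposition $f_2' = u_+ - u_-$ from Theorem~\ref{thm.bdkol}(b). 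For $T_1$, the explicit form
\[
(x/\theta) f_1'(x) + f_1(x) - f_1(x+1) = \mathds{1}(a - 1 < x \le a) - \frac{a^\theta}{(x+1)^\theta}\,\mathds{1}(x+1 > a), \qquad x \ne a,
\]
reduces the task to bounding the probability difference $|\Prob{a - 1 < W_n \le a} - \Prob{a - 1 < D_\theta \le a}|$ via \eqref{eq:Kdis} and Lemma~\ref{lem:bound_Dickman}, together with the difference of the two expectations via Lemma~\ref{lem:mom}(b), exploiting $a \ge l/n$ so that the prefactors $R_\theta(\max\{a, u\})$ remain harmless.

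The main obstacle I anticipate is the self-referential character of the $T_1$ bound: both Lemma~\ref{lem:mom}(b) and \eqref{eq:Kdis} reintroduce $d_K(W_n, D_\theta)$ on the right-hand side. After taking the supremum over $a \ge l/n$, the resulting inequality will take the form $d_K(W_n, D_\theta) \le A + c\cdot d_K(W_n, D_\theta)$, where $A$ is of the claimed order and $c$ must be kept bounded away from $1$ uniformly in $n$ to close the bound. The restriction $a \ge l/n$ is precisely what will tame the coefficients appearing in Lemma~\ref{lem:mom}(b), permitting the absorption step and yielding the rate stated in the theorem.
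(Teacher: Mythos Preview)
Your plan has two genuine gaps.

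\textbf{The $T_1$ bound cannot be absorbed.} Using your explicit formula,
\[
T_1=\E[\psi(W_n)] \quad\text{with}\quad \psi(x)=\mathds{1}(a-1<x\le a)-\Big(\tfrac{a}{x+1}\Big)^\theta\mathds{1}(x+1>a),
\]
and since $f_1\in\bigcup_\alpha\mathrm{Lip}_\alpha$, Proposition~\ref{prop.go}(c) gives $\E[\psi(D_\theta)]=0$. But $\psi$ has total variation $2$ (one jump of size $1$ at $x=a$, plus the monotone pieces on $(a-1,a]$ and $(a,\infty)$), so the best general bound is $|T_1|\le 2\,d_K(W_n,D_\theta)$, with the constant $2$ independent of $a$. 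Your intended use of Lemma~\ref{lem:mom}(b) does not apply here (the integrand is $(a/(x+1))^\theta$, not $a^\theta/(x+1)^{\theta+1}$), and in any case cannot bring the coefficient below $1$. Hence after taking the supremum you get $d_K\le |T_2|_{\sup}+2\,d_K$, which yields nothing. The paper avoids this by \emph{not} splitting the Stein identity at the level of $f_1$ versus $f_2$; instead it decomposes $\E[(W_n/\theta)f'(W_n)-f'(W_n+U)]$ according to the structure of the sum into four pieces $T_1,\ldots,T_4$ (Riemann error, swap correction, missing initial terms, $\tau$-correction), each of which is a \emph{difference of values of $f'$ at nearby points}. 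Only afterwards is $f'=f_1'+u_+-u_-$ invoked. This makes every $d_K$-coefficient scale like $1/(an)$, and choosing the threshold $M_n$ (not $l/n$) with a large enough constant forces the total coefficient below $1/2$.

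\textbf{The swap error is underestimated.} In your displayed expansion of $\E[(W_n/\theta)g(W_n)]$, replacing $W_n^{(k,i)}$ by $W_n$ costs $O(p_k^{(i)}\|g\|_{\R_+})$ per term, as you say, but summing these gives
\[
\frac{\|g\|_{\R_+}}{n\theta}\sum_{k=l}^n k\sum_{i=1}^{m_k}(p_k^{(i)})^2
\;\le\;\frac{\|g\|_{\R_+}}{n\theta}\sum_{k=l}^n k\Big(\frac{\theta}{k}+\frac{\tau}{k^2}\Big)^2
\;\asymp\;\frac{\theta\,\|g\|_{\R_+}\,\log(n/l)}{n},
\]
so a $\log(n/l)/n$ term survives even when $\tau=0$, contrary to your claimed $O(\tau\log(n/l)/n)$ remainder. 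With $g=f_2'$ this spoils the rate for $\theta\ge1$, $\tau=0$. The paper eliminates this logarithm by exploiting monotonicity more aggressively: writing $f'$ (and $f_1'$) as differences of non-negative non-increasing functions, the swap correction for each monotone piece $\widehat f$ is telescoped via
\[
\widehat f\big(W_n+\tfrac{k}{n}\big)-\widehat f\big(W_n+\tfrac{2k}{n}\big)=\sum_{j=k}^{2k-1}\Big(\widehat f\big(W_n+\tfrac{j}{n}\big)-\widehat f\big(W_n+\tfrac{j+1}{n}\big)\Big),
\]
after which a Fubini-type regrouping shows the coefficient of each increment is uniformly bounded (equation~\eqref{eq:InequalityHarmonic}); the sum then collapses to a single evaluation of $\widehat f$ and is $O(1/n)$. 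Your Riemann-sum use of the monotonicity of $u_\pm$ is correct, but the same idea must be pushed into the swap step as well.
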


\begin{proof}
The assumptions imply that
$$
\sum_{i=1}^{m_k} p_k^{(i)} \leq \frac{\theta}{k} + \frac{\tau}{k^2}
$$
for all $k\geq l$. Let $k_0\in\mathbb{N}$ be the smallest number depending only on $\theta$ and $\tau$ such that
$$
\frac{\theta}{k} + \frac{\tau}{k^2} \leq \frac{1}{2}
$$
for all $k\geq k_0$. In the following we first assume that $l\geq k_0$ whence $p_k^{(i)}\leq \frac{1}{2}$ for all $i\in\{1,\hdots,m_k\}$ and $k\geq l$. The case when $l < k_0$ is discussed at the end of the proof.

Fix $l \ge k_0$ and for $n\geq l$, define
$$
M_n= \begin{cases} 4 c_\theta \big(2C_1+l+ \frac{\tau}{\theta}\big(1+ \log(n/l)\big)\big)/n, & \quad \theta\geq 1, \\ \max\{6(2C_1+l)c_\theta, 36 \tau^2 \zeta(\theta+1)^2 \zeta(3/2)^2\}/n, & \quad \theta\in(0,1), \end{cases}
$$
with $c_\theta,C_1>0$ defined at \eqref{eqn:c_theta} and \eqref{eqn:C_1} below. With $\mathcal{R}_n$ as in \eqref{def:Rn}, we consider the case when $a \in [M_n,\infty)\setminus \mathcal{R}_n$.

Let $f$ be the solution to the Stein equation \eqref{eq:fstein} for $h=\mathds{1}(\bdot \leq a)$ from Theorem \ref{thm.bdkol}. Letting $W_n^{(k,i)}=W_n-(kB_k^{(i)})/n$, since $a \notin \mathcal{R}_n$, for $k \in \{l,\hdots,n\}$ and $i\in\{1,\hdots,m_k\}$ we have
\begin{equation*}
\E\left[ f'\left(W_n+\frac{k}{n}\right)\right] =\left(1-p_k^{(i)}\right) \E\left[f'\left(W_n^{(k,i)}+\frac{k}{n}\right)\right] + p_k^{(i)}\E \left[f'\left(W_n^{(k,i)}+\frac{2k}{n}\right)\right].
\end{equation*}
Since $p_k^{(i)} \leq \frac{1}{2}$, we derive
\begin{multline}\label{eq:Wk}
\E \left[f'\left(W_n^{(k,i)}+\frac{k}{n}\right)\right]=\frac{1}{1-p_k^{(i)}}\E \left[ f'\left(W_n+\frac{k}{n}\right)\right]-\frac{p_k^{(i)}}{1-p_k^{(i)}}\E \left[ f'\left(W_n^{(k,i)}+\frac{2k}{n}\right)\right]\\
=\E \left[f'\left(W_n+\frac{k}{n}\right)\right] +\frac{p_k^{(i)}}{1-p_k^{(i)}} \left(\E \left[f'\left(W_n+\frac{k}{n}\right)\right]-\E \left[f'\left(W_n^{(k,i)}+\frac{2k}{n}\right)\right]\right).
\end{multline}
For $U \sim \mathbb{U}[0,1]$ independent of $W_n$, it follows from Theorem \ref{thm.bdkol} that
\begin{align*}
\mathbf{P}\{W_n\leq a\} - \mathbf{P}\{D_\theta\leq a\} &=\E \left[(W_n/\theta) f'(W_n)-f'(W_n+U)\right]\\
&=\E\left[\sum_{k=l}^n \frac{k}{ \theta n} \sum_{i=1}^{m_k}B_k^{(i)} f'\left(W_n^{(k,i)}+\frac{k}{n}B_k^{(i)}\right)-f'(W_n+U) \right]\\
&=\frac{1}{n}\sum_{k=l}^n \E\left[\sum_{i=1}^{m_k} \frac{k p_k^{(i)}}{ \theta} f'\left(W_n^{(k,i)}+\frac{k}{n}\right)-f'\left(W_n+\frac{k-1}{n}+\frac{U}{n}\right) \right]\\
& \qquad \qquad- \frac{1}{n}\sum_{k=1}^{l-1} \E\left[f'\left(W_n+\frac{k-1}{n}+\frac{U}{n}\right) \right],
\end{align*}
where in the last step, we have taken expectation with respect to $B_k^{(i)}$ for the first term and conditioned on $U$ lying in the interval $[(k-1)/n,k/n)$ for the second and the third terms. Now using \eqref{eq:Wk} and noting that $\big|\sum_{i=1}^{m_k} k p_k^{(i)}/ \theta - 1 \big| \le \tau/(\theta k)$, we obtain
\begin{align}\label{eq:Tsp}
&\left|\mathbf{P}\{W_n\leq a\} - \mathbf{P}\{D_\theta\leq a\} \right| \nonumber\\
&\le \frac{1}{n} \Bigg|\sum_{k=l}^n \E\left[f'\left(W_n+\frac{k}{n}\right)-f'\left(W_n+\frac{k-1}{n}+\frac{U}{n}\right) \right] \Bigg|\nonumber\\
&\qquad + \frac{1}{n}\Bigg|\sum_{k=l}^n \sum_{i=1}^{m_k} \frac{k p_k^{(i)}}{\theta}  \frac{p_k^{(i)}}{1-p_k^{(i)}} \left(\E \left[f'\left(W_n+\frac{k}{n}\right)\right]-\E \left[f'\left(W_n^{(k,i)}+\frac{2k}{n}\right)\right]\right)\Bigg| \nonumber \\
&\qquad +\frac{1}{n}\Bigg|\sum_{k=1}^{l-1} \E\left[f'\left(W_n+\frac{k-1}{n}+\frac{U}{n}\right) \right]\Bigg| + \frac{\tau}{\theta n} \sum_{k=l}^n \frac{1}{k}\E\left[\bigg|f'\left(W_n+\frac{k}{n}\right)\bigg|  \right] \nonumber\\
& =: |T_1|+|T_2|+|T_3| + T_4. 
\end{align}
For $i\in\{1,2,3\}$ we rewrite $T_i$ as $T_i=T_{i,1}+T_{i,2,+}-T_{i,2,-}$ by using the decomposition $f'=f_1'+u_{+}-u_{-}$ from Theorem \ref{thm.bdkol}, i.e., we obtain $T_{i,1}$, $T_{i,2,+}$ and $T_{i,2,-}$ by replacing $f'$ by $f_1'$, $u_{+}$ and $u_{-}$, respectively, in the definition of $T_i$. For analogously defined $T_{4,1}$, $T_{4,2,+}$ and $T_{4,2,-}$, we have
$$
T_4 \leq T_{4,1}+T_{4,2,+}+T_{4,2,-}
$$
due to the absolute value in the definition of $T_4$.

We start by bounding $|T_{1,1}|$. Since $f'_1$ is $0$ in $[0,a)$ and increasing in $(a,\infty)$, we have a.s.,
$$
	\Bigg| \sum_{k=l}^n \left(f'_1\left(W_n+\frac{k}{n}\right)-f'_1\left(W_n+\frac{k-1}{n}+\frac{U}{n}\right)\right) \Bigg| \le \sup_{x\in\mathbb{R}_+\setminus\{a\}} |f_1'(x)| = \frac{\theta}{a},
	$$
	while if $W_n >a$,
	$$
	\Bigg| \sum_{k=l}^n \left(f'_1\left(W_n+\frac{k}{n}\right)-f'_1\left(W_n+\frac{k-1}{n}+\frac{U}{n}\right)\right) \Bigg|  \le |f_1'(W_n)|= \frac{\theta a^\theta}{W_n^{\theta+1}}.
	$$
	Thus, using Lemma~\ref{lem:mom} with $u=0$, we obtain
	\begin{align}\label{eq:T2temp}
	|T_{1,1}| &\le \frac{1}{n}\E \Bigg[\Bigg| \sum_{k=l}^n \left(f'_1\left(W_n+\frac{k}{n}\right)-f'_1\left(W_n+\frac{k-1}{n}+\frac{U}{n}\right)\right) \Bigg| \left(\mathds{1}(W_n \le a)+\mathds{1}(W_n > a)\right)\Bigg]\nonumber\\
	&\le \frac{1}{n}\E\left[\mathds{1}(W_n\le a) \frac{\theta}{a}\right]  + \frac{1}{n}\E\left[\mathds{1}(W_n> a) \frac{\theta a^\theta}{W_n^{\theta+1}}\right]\nonumber\\
	& \le \frac{\theta R_{\theta}(a)}{n} + \frac{\theta d_K(W_n,D_\theta)}{an} +\theta \zeta(\theta+1)\left(\frac{R_\theta(a)}{n} + \frac{2d_K(W_n,D_\theta)}{an}\right).
	\end{align}
	From the the properties of $u_{+}$ and $u_{-}$ it follows that
$$	
0 \leq -T_{1,2,+} \leq \frac{\theta^2}{n} \quad \text{and} \quad 0 \leq -T_{1,2,-} \leq \frac{\theta^2}{n}
$$
whence
\begin{equation}\label{eq:T2}
|T_1| \leq c_\theta \left(\frac{R_\theta(a)}{n}+ \frac{2d_K(W_n,D_\theta)}{an}\right) + \frac{\theta^2}{n},
\end{equation}
where, for the economy of notation, we write
\begin{equation}\label{eqn:c_theta}
	c_\theta=\theta(1+\zeta(\theta+1)).
\end{equation}

In order to bound $|T_2|$, we first prove an inequality, which is applied several times. Let $\widehat{f}$ be a non-negative and non-increasing function on $\mathbb{R}_+$. For ease of notation, let us write $s_{k,i}=\frac{k p_k^{(i)}}{\theta}  \frac{p_k^{(i)}}{1-p_k^{(i)}}$ for $k\geq l$ and $i\in\{1,\hdots,m_k\}$. Then, we have
\begin{align*}
& \frac{1}{n} \sum_{k=l}^n \sum_{i=1}^{m_k} s_{k,i} \E\left[ \widehat{f}\left( W_n + \frac{k}{n} \right) - \widehat{f}\left( W_n^{(k,i)} + \frac{2k}{n} \right) \right] \\
& \leq \frac{1}{n} \sum_{k=l}^n \sum_{i=1}^{m_k} s_{k,i} \E\left[ \widehat{f}\left( W_n + \frac{k}{n} \right) - \widehat{f}\left( W_n + \frac{2k}{n} \right) \right] \\
& = \frac{1}{n} \sum_{k=l}^n \sum_{i=1}^{m_k} s_{k,i} \sum_{j=k}^{2k-1} \E\left[ \widehat{f}\left( W_n + \frac{j}{n} \right) - \widehat{f}\left( W_n + \frac{j+1}{n} \right) \right] \\
& = \frac{1}{n} \sum_{j=l}^{2n-1} \E\left[ \widehat{f}\left( W_n + \frac{j}{n} \right) - \widehat{f}\left( W_n + \frac{j+1}{n} \right) \right] \sum_{k=l}^n \mathds{1}\left( \frac{j+1}{2} \leq k \leq j\right) \sum_{i=1}^{m_k} s_{k,i}.
\end{align*}
Using that $1-p_k^{(i)} \ge \frac{1}{2}$ and $\sum_{i=1}^{m_k} k p_k^{(i)} \le \theta+ \tau/k \le \theta+ \tau$ for $k\ge l$ yields
\begin{align*}
\sum_{k=l}^n \mathds{1}\left( \frac{j+1}{2} \leq k \leq j\right) \sum_{i=1}^{m_k} s_{k,i} & = \frac{1}{\theta}\sum_{k=l}^n \frac{\mathds{1}\left( \frac{j+1}{2} \leq k \leq j\right)}{k} \sum_{i=1}^{m_k} \frac{(kp_k^{(i)})^2}{1-p_k^{(i)}}\\
& \le \frac{2}{\theta} \sum_{k=l}^n \frac{\mathds{1}\left( \frac{j+1}{2} \leq k \leq j\right)}{k} \left(\sum_{i=1}^{m_k} kp_k^{(i)} \right)^2\\
& \le \frac{2(\theta+\tau)^2}{\theta} \sum_{k=l}^n \frac{\mathds{1}\left( \frac{j+1}{2} \leq k \leq j\right)}{k}.
\end{align*}
Noting that
\begin{multline*}
\sum_{k=l}^n \frac{\mathds{1}\left( \frac{j+1}{2} \leq k \leq j\right)}{k} \leq 1+ \int_{\frac{j+1}{2}}^j \frac{1}{x} dx
= 1+ \left( \ln(j) - \ln\left( \frac{j+1}{2}\right) \right)= 1+ \ln\left( \frac{2j}{j+1} \right) \leq 2,
\end{multline*} 
we obtain
\begin{equation}\label{eqn:C_1}
\sum_{k=l}^n \mathds{1}\left( \frac{j+1}{2} \leq k \leq j\right) \sum_{i=1}^{m_k} s_{k,i} \le \frac{ 4(\theta+\tau)^2}{\theta} =: C_1.
\end{equation}
Thus, we have
\begin{align}	\label{eq:InequalityHarmonic}
0 & \le  \frac{1}{n} \sum_{k=l}^n \sum_{i=1}^{m_k} s_{k,i}  \E\left[ \widehat{f}\left( W_n + \frac{k}{n} \right) - \widehat{f}\left(W_n^{(k,i)} + \frac{2k}{n} \right) \right] \nonumber\\
& \leq \frac{C_1}{n} \sum_{j=l}^{2n-1} \E\left[ \widehat{f}\left( W_n + \frac{j}{n} \right) - \widehat{f}\left( W_n + \frac{j+1}{n} \right) \right] \nonumber\\
&= \frac{C_1}{n} \E\left[ \widehat{f}\left( W_n + \frac{l}{n} \right) - \widehat{f}\left( W_n + 2 \right) \right]
\le \frac{C_1}{n} \E\left[ \widehat{f}\left( W_n + \frac{l}{n} \right)\right]. 
\end{align}

Since, for $x\in\R_+\setminus\{a\}$,
$$
f_1'(x) = -\frac{\theta a^\theta}{x^{\theta+1}} \mathds{1}(x>a)= \frac{\theta}{a}\mathds{1}(x\leq a) - \left(\frac{\theta}{a}\mathds{1}(x\leq a) + \frac{\theta a^\theta}{x^{\theta+1}} \mathds{1}(x>a) \right) =: v_{+}(x) -v_{-}(x)
$$
with $v_{+}$ and $v_{-}$ non-increasing and non-negative, it follows from \eqref{eq:InequalityHarmonic} that
\begin{align*}
|T_{2,1}| & \leq \frac{1}{n} \bigg|\sum_{k=l}^n \sum_{i=1}^{m_k} s_{k,i} \E\left[ v_{+}\left( W_n + \frac{k}{n} \right) - v_{+}\left(W_n^{(k,i)} + \frac{2k}{n} \right) \right] \bigg| \\
& \quad + \frac{1}{n} \bigg|\sum_{k=l}^n \sum_{i=1}^{m_k} s_{k,i} \E\left[ v_{-}\left( W_n + \frac{k}{n} \right) - v_{-}\left(W_n^{(k,i)} + \frac{2k}{n} \right) \right] \bigg|
 \\
& \leq \frac{C_1}{n} \E\left[ v_{+}\left( W_n + \frac{l}{n} \right) + v_{-}\left( W_n + \frac{l}{n} \right) \right] \leq \frac{C_1}{n} \E\left[ v_{+}\left( W_n \right) + v_{-}\left( W_n\right) \right] \\
& = \frac{C_1}{n} \E\left[ \frac{2\theta}{a} \mathds{1}(W_n\leq a) + \frac{\theta a^\theta}{W_n^{\theta+1}} \mathds{1}(W_n>a) \right].
\end{align*}
As in \eqref{eq:T2temp}, using Lemma~\ref{lem:mom} with $u=0$ we obtain
$$
|T_{2,1}| \leq C_1 \left[\frac{2\theta R_{\theta}(a)}{n} + \frac{2\theta d_K(W_n,D_\theta)}{an} +\theta \zeta(\theta+1)\left(\frac{R_\theta(a)}{n} + \frac{2d_K(W_n,D_\theta)}{an}\right)\right].
$$
Combining \eqref{eq:InequalityHarmonic} with the fact that $u_{+}$ and $u_{-}$ are non-negative, non-increasing and uniformly bounded by $\theta^2$, we derive
$$
0\leq T_{2,2,+} \leq \frac{C_1 \theta^2}{n} \quad \text{and} \quad 0 \leq T_{2,2,-} \leq \frac{C_1 \theta^2}{n}.
$$
With $c_\theta$ as in \eqref{eqn:c_theta}, this implies
\begin{equation}\label{eq:T3}
|T_2| \leq 2C_1 \left( c_\theta \left(\frac{R_\theta(a)}{n}+ \frac{2d_K(W_n,D_\theta)}{an} \right) + \frac{\theta^2}{n}\right).
\end{equation}

Next we bound $|T_3|$. Since $|f_1'(x)|=|\theta a^\theta \mathds{1}(x>a)/x^{\theta+1}| \le \theta/a$ for all $x \in \R_+ \setminus\{a\}$, we obtain 
\begin{align*}
	|T_{3,1}|& \le \frac{1}{n} \sum_{k=1}^{l-1} \E \left[\bigg|f'_1\left(W_n+\frac{k-1}{n}+\frac{U}{n}\right)\bigg|\left(\mathds{1}(W_n \le a)+\mathds{1}(W_n >a)\right) \right]\nonumber\\
	&\le \frac{\theta(l-1)}{an}\Prob{W_n \le a} + \frac{l-1}{n}\E \left[\frac{\theta a^\theta}{W_n^{\theta+1}}\mathds{1}(W_n >a)\right].
\end{align*}
Thus, Lemma~\ref{lem:mom} with $u=0$ yields
$$	
|T_{3,1}| \le \theta (l-1) \left[\frac{R_{\theta}(a)}{n} + \frac{d_K(W_n,D_\theta)}{an}+ \zeta(\theta+1)\left(\frac{R_\theta(a)}{n} + \frac{2d_K(W_n,D_\theta)}{an}\right)\right].
$$
Since $u_{+}$ and $u_{-}$ are non-negative and uniformly bounded by $\theta^2$ (see Theorem \ref{thm.bdkol}), we have
$$
|T_{3,2,+}-T_{3,2,-}| \leq \frac{(l-1)\theta^2}{n}
$$
so that
\begin{equation}\label{eq:T1}
	|T_{3}| \le (l-1)\left[c_\theta \left(\frac{R_\theta(a)}{n} + \frac{2d_K(W_n,D_\theta)}{an}\right) + \frac{\theta^2}{n} \right].
\end{equation}

Finally, we bound $T_4$. Arguing similarly as for $T_{3,1}$, using that $f_1'(x)=-\frac{\theta a^\theta}{x^{\theta+1}} \mathds{1}(x>a)$ for $x\in\mathbb{R}_+\setminus\{a\}$ in the first step and Lemma~\ref{lem:mom} with $u=k/n$ in the second, we obtain
\begin{align}\label{eq:T41}
	T_{4,1}
	& \le \frac{\tau}{n} \sum_{k=l}^n \frac{1}{k} \E \left[\frac{a^\theta}{(W_n + k/n)^{\theta+1}}\mathds{1}\left(W_n+\frac{k}{n}>a\right)\right]\nonumber\\
	& \le \frac{\tau}{n} \sum_{k=l}^n \frac{\zeta(\theta + 1)}{k} \left( R_\theta (\max\{a, k/n\}) + \frac{2d_K(W,D_\theta)}{\max\{a, k/n\}}\right).
\end{align}

First consider the case when $\theta \ge1$. Since $\sum_{k=l}^n \frac{1}{k} \le 1 + \log (n/l)$ for $n \ge l$, we derive
$$
T_{4,1} \le \tau \zeta(\theta+1) (1+\log (n/l)) \left[\frac{R_{\theta}(a)}{n} + \frac{2 d_K(W_n,D_\theta)}{an}\right].
$$
Using that $u_{+}$ and $u_{-}$ are non-negative and uniformly bounded by $\theta^2$ (see Theorem \ref{thm.bdkol}), we have
\begin{equation}\label{eq:T42}
T_{4,2,+}+T_{4,2,-} \leq 2\tau\theta \frac{1+\log (n/l)}{n}
\end{equation}
so that
\begin{equation}\label{eq:T4}
	T_4 \le \frac{\tau}{\theta} (1+\log (n/l)) \left[c_\theta \left(\frac{R_\theta(a)}{n} + \frac{2d_K(W_n,D_\theta)}{an}\right) + \frac{2\theta^2}{n} \right].
\end{equation}
From \eqref{eq:Tsp}, \eqref{eq:T2}, \eqref{eq:T3}, \eqref{eq:T1} and \eqref{eq:T4} it follows that
\begin{align*}
|\mathbf{P}\{W_n\leq a) - \mathbf{P}\{D_\theta\leq a\}| \leq & c_\theta \bigg(2C_1+l + \frac{\tau}{\theta} \big(1+\log (n/l)\big) \bigg) \left(\frac{R_\theta(a)}{n}+ \frac{2d_K(W_n,D_\theta)}{an}\right) \\
& \qquad + \bigg(2C_1+l + \frac{2\tau}{\theta}\big(1+ \log(n/l)\big)\bigg) \frac{\theta^2}{n}.
\end{align*}
Recalling that $M_n=4 c_\theta \big(2C_1+l+ \frac{\tau}{\theta}\big(1+ \log(n/l)\big)\big)/n$, for $a \in [M_n,\infty)\setminus\mathcal{R}_n$ we have
\begin{equation*}
 \bigg(2C_1+l + \frac{\tau}{\theta} \big(1+\log (n/l)\big) \bigg)  \frac{c_\theta}{an} \le \frac{1}{4}
\end{equation*}
whence
\begin{align*}
\sup_{a\geq M_n} |\mathbf{P}\{W_n\leq a) - \mathbf{P}\{D_\theta\leq a\}| & = \sup_{a\in [M_n,\infty) \setminus \mathcal{R}_n} |\mathbf{P}\{W_n\leq a) - \mathbf{P}\{D_\theta\leq a\}|\\
& \leq \bigg(2C_1+l + \frac{2\tau}{\theta} \big(1+\log (n/l)\big) \bigg) \frac{c_\theta K_\theta + \theta^2}{n} + \frac{1}{2} d_K(W_n,D_\theta).
\end{align*}
Now Lemma \ref{lem:lb} yields
\begin{equation*}
\begin{split}
d_K(W_n,D_\theta) & \le K_\theta M_n + \bigg(2C_1+l + \frac{2\tau}{\theta} \big(1+\log (n/l)\big) \bigg) \frac{c_\theta K_\theta + \theta^2}{n} + \frac{1}{2} d_K(W_n,D_\theta) \\
& \leq \bigg(2C_1+l + \frac{2\tau}{\theta} \big(1+\log (n/l)\big) \bigg) \frac{5c_\theta K_\theta + \theta^2}{n} + \frac{1}{2} d_K(W_n,D_\theta)
\end{split}
\end{equation*}
so that
\begin{equation*}
d_K(W_n,D_\theta) \le \frac{C_2 l}{n} + \frac{C_3 \tau \log(n/l)}{n}  
\end{equation*}
with 
$$
C_2= 2\bigg(2C_1+1+\frac{2\tau}{\theta}\bigg) (5c_\theta K_\theta+\theta^2) \quad \text{and} \quad C_3=\frac{4}{\theta} (5c_\theta K_\theta+\theta^2).
$$
This proves the desired upper bound for $\theta \ge 1$ and $l\ge k_0$.

Next, we consider the case when $\theta \in (0,1)$ and $l\ge k_0$. From \eqref{eq:T41}, we have
\begin{align*}
T_{4,1} & \leq \frac{\tau}{n} \sum_{k=l}^n \frac{\zeta(\theta+1)}{k} \bigg( \frac{k}{n} \bigg)^{\theta-1} + \frac{\tau}{n} \sum_{k=l}^n \frac{\zeta(\theta+1)}{k} \frac{2 d_K(W,D_\theta)}{\sqrt{a} \sqrt{k/n} } \\
& = \frac{\tau \zeta(\theta+1)}{n^\theta} \sum_{k=l}^n \frac{1}{k^{2-\theta}} + \tau \zeta(\theta+1) \sum_{k=l}^n \frac{1}{k^{3/2}} \frac{2 d_K(W,D_\theta)}{\sqrt{na}} \\
& \leq \frac{\tau \zeta(\theta+1)\zeta(2-\theta)}{n^\theta} + \tau \zeta(\theta+1) \zeta(3/2) \frac{2d_K(W,D_\theta)}{\sqrt{na}}.
\end{align*}
This and \eqref{eq:T42} yield
$$
T_4 \le \frac{\tau \zeta(\theta+1)\zeta(2-\theta)}{n^\theta} + \tau \zeta(\theta+1) \zeta(3/2) \frac{2d_K(W,D_\theta)}{\sqrt{na}} + 2\tau\theta\frac{1+\log n}{n}.
$$
Combining this with \eqref{eq:Tsp}, \eqref{eq:T2}, \eqref{eq:T3} and \eqref{eq:T1}, and recalling $c_\theta$ in \eqref{eqn:c_theta}, we obtain
\begin{align*}
|\mathbf{P}\{W_n\leq a) - \mathbf{P}\{D_\theta\leq a\}| & \leq (2C_1 + l) c_\theta \left(\frac{R_\theta(a)}{n}+ \frac{2d_K(W_n,D_\theta)}{an}\right)+ (2C_1 + l) \frac{\theta^2}{n} \\
& \quad + \frac{\tau \zeta(\theta+1)\zeta(2-\theta)}{n^\theta} + \tau \zeta(\theta+1) \zeta(3/2) \frac{2d_K(W,D_\theta)}{\sqrt{na}} + 2\tau\theta\frac{1+\log n}{n}.
\end{align*}
Since $a\geq M_n$ with
$$
M_n = \frac{\max\{6(2C_1+l)c_\theta, 36 \tau^2 \zeta(\theta+1)^2 \zeta(3/2)^2\}}{n},
$$
the coefficients of the Kolmogorov distances in the previous bound are at most $1/3$. Also we have that
\begin{equation}\label{eqn:bounds_M_n}
\frac{c_\theta l}{n} \leq M_n \leq \frac{c_{\theta,\tau} l}{n} \quad \text{with} \quad c_{\theta,\tau} = \max\{ 6(2C_1+1)c_\theta, 36 \tau^2 \zeta(\theta+1)^2 \zeta(3/2)^2 \}.
\end{equation}
Thus, noting that for $\theta\in (0,1)$,
$$
\frac{R_\theta(a)}{n}=\frac{a^{\theta-1}}{n} \leq \frac{M_n^{\theta-1}}{n} \leq \frac{c_\theta^{\theta-1} l^{\theta-1}}{n^\theta},
$$
we obtain
\begin{align*}
	\sup_{a\geq M_n} |\mathbf{P}\{W_n\leq a) - \mathbf{P}\{D_\theta\leq a\}| & = \sup_{a\in [M_n,\infty) \setminus \mathcal{R}_n} |\mathbf{P}\{W_n\leq a) - \mathbf{P}\{D_\theta\leq a\}|\\
	& \leq \frac{C_4 l^\theta}{n^\theta} + \frac{2}{3} d_K(W_n,D_\theta)
\end{align*}
with
$$
C_4 = (2C_1+1) c_\theta^\theta + (2C_1+1) \theta^2 + \tau \zeta(\theta+1)\zeta(2-\theta) + 2\tau\theta \left(1+ \sup_{y\in(1,\infty)}\frac{\log y}{y^{1-\theta}} \right).
$$
From Lemma \ref{lem:lb} and \eqref{eqn:bounds_M_n} it follows that
\begin{equation*}
	d_K(W_n,D_\theta) \le  \left(\frac{c_{\theta,\tau} l}{n}\right)^\theta + \frac{C_4 l^\theta}{n^\theta} + \frac{2}{3} d_K(W_n,D_\theta),
\end{equation*}
yielding
\begin{equation*}
	d_K(W_n,D_\theta) \le \frac{C_5 l^\theta}{n^\theta}
\end{equation*}
with $C_5= 3 (C_4+c_{\theta,\tau}^\theta)$. 
This shows the desired bound for $\theta \in (0,1)$ and $l\geq k_0$.

Finally, we study the case when $l < k_0$. We rewrite $W_n$ as
$$
W_n = \frac{1}{n} \sum_{k=l}^{k_0 -1} k Z_k + \frac{1}{n} \sum_{k=k_0}^n k Z_k =: R_n + W_n'.
$$
Since $R_n$ and $W_n'$ are independent, it follows from \eqref{eqn:d_K_approximation} that
$$
d_K(W_n,D_\theta) \leq d_K(W_n',D_\theta) + \tilde{r}_n
$$
with
$$
\tilde{r}_n:=\begin{cases} K_\theta \mathbf{E} R_n, & \quad \theta\geq 1,\\ (\mathbf{E} R_n)^{\theta}, & \quad \theta\in(0,1). \end{cases}
$$
For the Kolmogorov distance between $W_n'$ and $D_\theta$ we have already proven the assertion. Combining this with the observation that
$$
\mathbf{E}[R_n] = \frac{1}{n} \sum_{k=l}^{k_0-1} k \mathbf{E}[Z_k] \leq \frac{1}{n} \sum_{k=l}^{k_0-1} \left(\theta + \frac{\tau}{k}\right)\leq \frac{(\theta+\tau) k_0}{n}
$$
completes the proof.
\end{proof}

\begin{proof}[Proofs of upper bounds in Theorems~\ref{thm.ber} and \ref{thm:Poi}] We start by proving the upper bound in Theorem~\ref{thm.ber}. We have
	\begin{equation}\label{eq:tau}
		\left|\frac{\theta}{k+\beta} - \frac{\theta}{k}\right| \le \frac{\theta |\beta|}{k (k+\beta)} \le \frac{\theta |\beta|}{k^2} \frac{k}{k+\beta} \leq \frac{\theta |\beta|}{k^2} \bigg(1-\frac{\beta}{k+\beta}\bigg) \leq \frac{\theta |\beta|}{k^2} \bigg(1+\frac{|\beta|}{\theta}\bigg) = \frac{|\beta| (\theta+|\beta|)}{k^2}
	\end{equation}
	for $k\geq l \ge \theta-\beta$. From Theorem \ref{thm.ber'} with $m_k=1$, $p_k^{(1)}=\frac{\theta}{k+\beta}$ for $k\geq l$, and $\tau=|\beta|(\theta+|\beta|)$, it follows that
	$$
	d_K(W_n,D_\theta) \le \begin{dcases} C \left(\frac{l + |\beta| \log (n/l)}{n}\right), & \quad \theta\geq 1,\\  \frac{C l^\theta}{n^\theta}, & \quad \theta \in (0,1), \end{dcases}
	$$
	with a constant $C\in(0,\infty)$ depending only $\theta$ and $\beta$.
	
	Next, we prove the upper bound in Theorem~\ref{thm:Poi}. For $\frac{\theta}{l+\beta} \le m\in\N$, let $(Z_k)_{k\in\mathbb{N}}$ be independent random variables such that $Z_k$ follows a binomial distribution with parameters $m$ and $\frac{\theta}{m(k+\beta)}$ and let $W_{n,m}=\frac{1}{n}\sum_{k=l}^n k Z_k$. From Theorem \ref{thm.ber'} with $\tau=|\beta|(\theta+|\beta|)$ (see \eqref{eq:tau}), it follows that there exists a constant $C\in(0,\infty)$ depending only on $\theta$ and $\beta$ such that
	$$
	d_K(W_{n,m},D_\theta) \leq \begin{dcases} C \left(\frac{l + |\beta| \log(n/l)}{n}\right), & \quad \theta\geq 1,\\  \frac{C l^\theta}{n^\theta}, & \quad \theta \in (0,1). \end{dcases}
	$$
	Notice that
		$$
		d_K(W_n, D_\theta) \le d_K(W_{n,m},D_\theta) + \Prob{W_{n,m}\neq W_n}
		$$
		for any coupling of $W_n$ and $W_{n,m}$ on the same probability space. Since $W_n$ and $W_{n,m}$ can be coupled in such a way that $\Prob{W_{n,m}\neq W_n}\to0$ as $m\to\infty$ and $C$ does not depend on $m$, this proves the upper bound in Theorem~\ref{thm:Poi}. 
\end{proof}

\section{Lower bounds}\label{sec:lowerbound}
In this section, we develop some general tools culminating in Proposition~\ref{prop:optimality} providing a general lower bound on the Kolmogorov distances between $W_n$ as in Theorems~\ref{thm.ber} and \ref{thm:Poi} and $D_\theta$ for $\theta \ge 1$. We start with the following proposition that provides a lower bound on the Kolmogorov distance between a re-scaled integer-valued random variable and a Dickman distributed random variable. The proof uses the same ideas as in the proof of \cite[Lemma 4.1]{PRR13}.
\begin{prop}\label{prop:intapp}
	For each $\theta>0$ there exists a constant $c_\theta \in (0,\infty)$ such that for all random variables $S$ taking values in $\{0\} \cup (\mathbb{Z}\cap [l,\infty))$ for some $l\in\mathbb{N}$ and all $n\in\mathbb{N}$ with $n\geq l$,
	$$
	d_K(n^{-1}S, D_\theta) \ge \begin{cases} \frac{c_\theta l^\theta}{n^\theta}, & \quad \theta\in(0,1],\\ \frac{c_\theta}{n}, & \quad \theta>1. \end{cases} 
	$$
\end{prop}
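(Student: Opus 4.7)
My plan is to exploit the mismatch between the discrete support of $n^{-1}S$ and the continuous density of $D_\theta$. The random variable $n^{-1}S$ puts mass only on $\{0\} \cup \{k/n : k \in \N,\ k\geq l\}$, so on any open interval $(a,b)\subset \R_+$ disjoint from this set, $F_{n^{-1}S}$ is constant. Applying the triangle inequality to $F_{n^{-1}S} - F_{D_\theta}$ evaluated at $a$ and at $b-\eps$, then letting $\eps \downarrow 0$ and using continuity of $F_{D_\theta}$, yields the basic \emph{gap inequality}
\begin{equation*}
d_K(n^{-1}S, D_\theta) \ge \tfrac{1}{2}\Prob{D_\theta \in (a,b)}.
\end{equation*}
This is the only ingredient needed; the rest is choosing the right interval.

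For $\theta\in(0,1]$ I would apply the gap inequality to the long interval $I=(0,l/n)$, which separates $0$ from the remaining potential atoms of $n^{-1}S$ and lies in $[0,1]$ since $l\le n$. The explicit formula \eqref{eqn:P_D_theta} gives $\Prob{D_\theta\in I} = e^{-\theta\gamma}(l/n)^\theta/\Gamma(\theta+1)$, which already provides the claimed lower bound of order $(l/n)^\theta$.

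For $\theta>1$, the same big-gap bound reduces to $(l/n)^\theta$, which is weaker than $1/n$ when $l$ is small, so I would split on the size of $l$. When $l>n/2$, the gap inequality applied to $(0,l/n)$ gives $d_K \ge \tfrac12 F_{D_\theta}(1/2) = e^{-\theta\gamma}\,2^{-\theta-1}/\Gamma(\theta+1)$, a strictly positive constant depending only on $\theta$ that certainly dominates $c_\theta/n$. When $l\le n/2$, I would apply the gap inequality to a short gap $(k/n,(k+1)/n)$ placed inside $[1/2,1]$, where $p_\theta$ is uniformly bounded below: since $p_\theta(x)=e^{-\theta\gamma}x^{\theta-1}/\Gamma(\theta)$ is increasing on $(0,1]$ for $\theta>1$, one has $p_\theta \ge m_\theta := e^{-\theta\gamma}\,2^{1-\theta}/\Gamma(\theta)$ on $[1/2,1]$. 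Picking, for instance, $k=\lceil n/2\rceil$ ensures $k\ge l$ and $(k/n,(k+1)/n)\subset[1/2,1]$ for $n$ sufficiently large, giving $\Prob{D_\theta\in(k/n,(k+1)/n)}\ge m_\theta/n$, and hence $d_K \ge m_\theta/(2n)$. Finitely many small $n$ are absorbed by shrinking $c_\theta$.

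The only delicate point is making both cases for $\theta>1$ chain together uniformly in $l$. The threshold $l=n/2$ is dictated by the analysis itself: for larger $l$, the long gap $(0,l/n)$ already contains a region of uniformly positive $D_\theta$-mass, while for smaller $l$ there is enough room in $\{l,\ldots,n-1\}$ to slide a short gap into the fixed compact set $[1/2,1]$ on which $p_\theta$ is bounded below. Everything beyond this case split is direct estimation based on the explicit formulas \eqref{eq:rho} and \eqref{eqn:P_D_theta}.
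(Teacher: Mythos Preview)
Your proposal is correct and rests on the same idea as the paper's proof: exploit a gap in the support of $n^{-1}S$ where $D_\theta$ has positive mass, yielding $d_K \ge \tfrac12 \Prob{D_\theta \in I}$ for any open interval $I$ disjoint from that support. For $\theta\in(0,1]$ your choice $I=(0,l/n)$ coincides exactly with the paper's.

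For $\theta>1$ the paper avoids your case split on $l$ by the cleaner choice $I_n=(1-\tfrac{1}{2n},1)$: this interval contains no multiples of $1/n$ (since $n-\tfrac12<j<n$ has no integer solution), lies inside $(0,1]$ where the density is explicit, and the mean value theorem gives $\Prob{D_\theta\in I_n}\ge C_\theta/n$ uniformly in $l$ and $n$. Your route also works, but two remarks: the condition $k\ge l$ you impose is irrelevant, since any open interval $(k/n,(k+1)/n)$ already misses the support regardless of $l$; and your ``for $n$ sufficiently large'' caveat is unnecessary once you note that $l\le n/2$ forces $n\ge 2$, which already guarantees $(\lceil n/2\rceil/n,(\lceil n/2\rceil+1)/n)\subset[1/2,1]$. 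The paper's single-interval choice buys uniformity without the split; yours buys nothing extra but is not wrong.
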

\begin{proof} When $\theta \in (0,1]$, note that
	$$
	\Prob{n^{-1}S \in (0,l/n)} = 0 \quad \text{and} \quad \Prob{D_\theta \in (0,l/n)} = \frac{e^{-\theta \gamma} l^\theta}{\Gamma(\theta+1) n^\theta},
	$$
	where the second identity follows from \eqref{eqn:P_D_theta} and the continuity of the distribution of $D_\theta$.	Thus,
	\begin{align*}
		\frac{e^{-\theta \gamma} l^\theta}{\Gamma(\theta+1) n^\theta} &=\left| \Prob{n^{-1}S \in (0,l/n)} - \Prob{D_\theta \in(0,l/n)}\right| \\
		&=\lim_{ \eps \searrow 0} \left| \Prob{n^{-1}S \in (0,l/n-\eps]} - \Prob{D_\theta \in(0,l/n-\eps]}\right| \le 2 d_K(n^{-1}S,D_\theta)
	\end{align*}
	proving the assertion for $\theta \in (0,1]$.
	For $\theta >1$, letting $I_n = (1-(2n)^{-1}, 1)$, by \eqref{eqn:P_D_theta} and the mean value theorem, we derive
	\begin{align*}
		\Prob{D_\theta\in I_n}&=\frac{e^{-\theta \gamma}}{\Gamma(\theta+1)} \left[1 - \left(1-\frac{1}{2n}\right)^\theta \right] 
		\ge \frac{e^{-\theta \gamma}}{\Gamma(\theta+1)} \left[\frac{\theta}{2n}\left(1-\frac{1}{2n}\right)^{\theta-1} \right] \ge \frac{C_\theta}{n}
	\end{align*}
	for some constant $C_\theta>0$ depending only on $\theta$. Since $I_n$ does not contain multiples of $1/n$, but $n^{-1}S$ can only take multiples of $1/n$ as values, we have $\Prob{n^{-1}S \in I_n}=0$. Arguing similarly as in the case of $\theta \in (0,1]$, this yields
	$$
	\frac{C_\theta}{n} \le \left| \Prob{n^{-1}S \in I_n} - \Prob{D_\theta \in I_n}\right|  \le 2 d_K(W_n,D_\theta)
	$$
	proving the result for $\theta> 1$.
\end{proof}

For the sums of independent random variables considered in this paper, it turns out that the lower bound provided in Proposition~\ref{prop:intapp} is optimal only for $\theta \in (0,1)$. In the rest of the section, we thus focus on the case when $\theta \ge 1$ and provide an improved lower bound in Proposition~\ref{prop:optimality}. We first prove two important lemmas.

\begin{lemma}\label{lem:D_theta_one}
For each $\theta\geq 1$ there exists a constant $\underline{c}_\theta\in(0,\infty)$ such that for independent random variables $D_\theta$ and $\Delta$ with $\Delta\geq 0$ $\mathbf{P}$-a.s.\ and $\mathbf{E}[\Delta^2]<\infty$,
$$
\bigg| \Prob{D_\theta \leq 1} - \Prob{D_\theta \leq 1-\Delta} - \frac{e^{-\theta\gamma}}{\Gamma(\theta)} \mathbf{E}[\Delta] \bigg| \leq \underline{c}_\theta \mathbf{E}[\Delta^2].
$$
\end{lemma}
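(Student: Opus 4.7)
My plan is to reduce the statement to a deterministic second-order Taylor bound by pulling the expectation inside the absolute value, and then to exploit the explicit formula for the density $p_\theta$ on $(0,1]$.

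First, let $F_\theta$ denote the CDF of $D_\theta$ and abbreviate $c_\theta := e^{-\theta\gamma}/\Gamma(\theta)$. By independence of $D_\theta$ and $\Delta$ we have $\mathbf{P}\{D_\theta\le 1-\Delta\}=\mathbf{E}\,F_\theta(1-\Delta)$, so the quantity to be bounded equals
$$
\bigl|\mathbf{E}\bigl[F_\theta(1)-F_\theta(1-\Delta)-c_\theta\Delta\bigr]\bigr|\le \mathbf{E}\bigl|F_\theta(1)-F_\theta(1-\Delta)-c_\theta\Delta\bigr|,
$$
and it suffices to establish the pointwise bound $|F_\theta(1)-F_\theta(1-\delta)-c_\theta\delta|\le \underline{c}_\theta\,\delta^2$ for every $\delta\ge 0$.

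From \eqref{eq:rho} and \eqref{eqn:P_D_theta}, on $[0,1]$ one has $p_\theta(y)=c_\theta\,y^{\theta-1}$, with left limit $c_\theta$ at $y=1$. Thus for $\delta\in[0,1]$,
$$
F_\theta(1)-F_\theta(1-\delta)=\frac{c_\theta}{\theta}\bigl[1-(1-\delta)^\theta\bigr],
$$
and the linear coefficient $c_\theta$ in the statement is exactly $p_\theta(1^-)$, the natural first-order Taylor coefficient. So on this range the problem reduces to proving the auxiliary inequality $|1-(1-\delta)^\theta-\theta\delta|\le k_\theta\,\delta^2$ for $\delta\in[0,1]$, with a constant $k_\theta$ depending only on $\theta$.

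To prove this auxiliary inequality I would split into the two regimes $\theta\ge 2$ and $\theta\in[1,2)$. For $\theta\ge 2$ the second derivative $\theta(\theta-1)(1-\delta)^{\theta-2}$ is uniformly bounded by $\theta(\theta-1)$ on $[0,1]$, so the standard Taylor remainder bound yields the inequality with $k_\theta=\theta(\theta-1)/2$. The main obstacle is the regime $\theta\in[1,2)$, where this second derivative blows up as $\delta\to 1$ and direct Taylor expansion fails. Here I would write $G(\delta):=(1-\delta)^\theta+\theta\delta-1=\int_0^\delta\theta[1-(1-t)^{\theta-1}]\,dt\ge 0$, and use that the integrand is non-decreasing in $t$ to get $G(\delta)\le \theta\delta[1-(1-\delta)^{\theta-1}]$. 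The function $\phi(t):=1-(1-t)^{\theta-1}$ is convex on $[0,1]$ for $\theta\in[1,2)$ (its second derivative is $-(\theta-1)(\theta-2)(1-t)^{\theta-3}\ge 0$) and satisfies $\phi(0)=0$, $\phi(1)=1$, so the chord inequality gives $\phi(\delta)\le \delta$; combining these yields $G(\delta)\le \theta\delta^2$, and hence the desired quadratic bound.

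It remains to handle $\delta>1$. In that range $F_\theta(1-\delta)=0$, so the remainder equals $c_\theta/\theta-c_\theta\delta$, whose absolute value is at most $c_\theta\delta\le c_\theta\delta^2$ (using $\delta>1$). Combining the two ranges gives $|F_\theta(1)-F_\theta(1-\delta)-c_\theta\delta|\le \underline{c}_\theta\delta^2$ uniformly in $\delta\ge 0$ for an appropriate constant $\underline{c}_\theta$, and taking expectations finishes the proof. As indicated, the only nontrivial step is the convexity argument in the regime $\theta\in(1,2)$.
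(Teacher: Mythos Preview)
Your proof is correct, and it differs from the paper's in a meaningful way. The paper does not attempt a pointwise bound valid for all $\delta\ge 0$; instead it applies the second-order Taylor expansion only for $\delta\in[0,1/2]$, where the Lagrange remainder $\tfrac{(\theta-1)e^{-\theta\gamma}}{2\Gamma(\theta)}z^{\theta-2}\delta^2$ is automatically controlled because $z\ge 1/2$, and then handles the event $\{\Delta>1/2\}$ separately via the crude bounds $\mathds{1}\{\Delta>1/2\}\le 2\Delta$ and $\Prob{\Delta>1/2}\le 4\E[\Delta^2]$. This avoids entirely the difficulty you flag for $\theta\in(1,2)$ near $\delta=1$, at the cost of keeping the random variable $\Delta$ in the argument rather than reducing to a deterministic inequality.

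Your route is a bit more work but yields a stronger conclusion: the uniform pointwise estimate $|F_\theta(1)-F_\theta(1-\delta)-c_\theta\delta|\le \underline{c}_\theta\delta^2$ for all $\delta\ge 0$, from which the lemma follows by a single application of $|\E[\cdot]|\le\E|\cdot|$. The convexity step---bounding the non-decreasing integrand by its right endpoint and then using that $\phi(t)=1-(1-t)^{\theta-1}$ lies below its chord---is a clean way to bypass the blow-up of the second derivative, and the resulting constant $k_\theta=\theta$ for $\theta\in[1,2)$ is explicit. The paper's split-and-Markov approach is shorter and perhaps more routine, but your argument is self-contained and gives the pointwise inequality, which could be independently useful.
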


\begin{proof}
For $\delta\in[0,1/2]$, by the Taylor series approximation one has
$$
\Prob{D_\theta \leq 1} - \Prob{D_\theta \leq 1-\delta} = \frac{e^{-\theta\gamma}}{\Gamma(\theta)} \delta - \frac{(\theta-1) e^{-\theta\gamma} z^{\theta-2}}{2\Gamma(\theta)} \delta^2
$$
for some $z\in[1-\delta,1]$. Choosing $\delta=\Delta$ when $\Delta\leq \frac{1}{2}$ yields
\begin{align*}
\Prob{D_\theta \leq 1} - \Prob{D_\theta \leq 1-\Delta} & = \mathbf{E}\bigg[ \mathds{1}\{ \Delta\leq 1/2 \} \bigg( \frac{e^{-\theta\gamma}}{\Gamma(\theta)} \Delta - \frac{(\theta-1) e^{-\theta\gamma} Z^{\theta-2}}{2\Gamma(\theta)} \Delta^2 \bigg) \bigg] \\
& \quad + \Prob{D_\theta \leq 1, \Delta>1/2} - \Prob{D_\theta \leq 1-\Delta, \Delta>1/2}
\end{align*}
with a random variable $1-\Delta \le Z \le 1$ $\mathbf{P}$-a.s. Since $\mathds{1}\{ \Delta> 1/2 \} \le 2 \Delta$, we have
$$
\mathbf{E}[ \mathds{1}\{ \Delta> 1/2 \} \Delta ] \leq 2 \mathbf{E}[\Delta^2],
$$
while a simple use of Markov's inequality yields
$$
\Prob{D_\theta \leq 1, \Delta>1/2} - \Prob{D_\theta \leq 1-\Delta, \Delta>1/2} \leq \Prob{\Delta>1/2} \leq 4 \mathbf{E}[\Delta^2].
$$
Thus, we obtain
$$
\bigg| \Prob{D_\theta \leq 1} - \Prob{D_\theta \leq 1-\Delta} - \frac{e^{-\theta\gamma}}{\Gamma(\theta)} \mathbf{E}[\Delta] \bigg| \leq \bigg( \frac{2e^{-\theta\gamma}}{\Gamma(\theta)} + \frac{2(\theta-1) e^{-\theta\gamma}}{2\Gamma(\theta)} + 4 \bigg) \mathbf{E}[\Delta^2],
$$
which concludes the proof.
\end{proof}

\begin{lemma}\label{lem:lower_bound}
For $\theta \ge 1$, $\underline{c}_\theta$ as in Lemma~\ref{lem:D_theta_one}, and independent non-negative random variables $S$, $\Delta_1$ and $\Delta_2$ with $\mathbf{E}[\Delta_1^2],\mathbf{E}[\Delta_2^2]<\infty$,
$$
d_K(S+\Delta_2,D_\theta) \geq \frac{e^{-\theta\gamma}}{\Gamma(\theta)} |\mathbf{E}[\Delta_1-\Delta_2]| - d_K(S+\Delta_1,D_\theta) - \underline{c}_\theta \big(\mathbf{E}[ \Delta_1^2 ] + \mathbf{E}[\Delta_2^2] \big).
$$
\end{lemma}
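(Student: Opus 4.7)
The plan is to establish two inequalities, one bounding $|A|$ and one bounding $|B|$, chosen so that $A-B=\frac{e^{-\theta\gamma}}{\Gamma(\theta)}\mathbf{E}[\Delta_1-\Delta_2]$, and then invoke the triangle inequality $|A-B|\le |A|+|B|$ to extract the desired lower bound on $d_K(S+\Delta_2,D_\theta)$. Both inequalities will share a common anchor quantity $\Prob{S+\Delta_1+\Delta_2\le 1}$, which cancels in the subtraction and leaves exactly the target $\tfrac{e^{-\theta\gamma}}{\Gamma(\theta)}|\mathbf{E}[\Delta_1-\Delta_2]|$ on one side. The key observation is that the Kolmogorov-distance inequality holds at every point $a\in\mathbb{R}_+$, and hence in particular at the \emph{random} point $a=1-\Delta_1$; averaging over $\Delta_1$ produces an estimate perfectly suited to be combined with Lemma~\ref{lem:D_theta_one}.

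Concretely, I would take $D_\theta$ independent of $(S,\Delta_1,\Delta_2)$ and first note that for every realization $d$ of $\Delta_1$,
\[ |\Prob{S+\Delta_2\le 1-d}-\Prob{D_\theta\le 1-d}|\le d_K(S+\Delta_2,D_\theta). \]
Taking expectation over $\Delta_1$, using $|\mathbf{E}X|\le\mathbf{E}|X|$, the mutual independence of $S,\Delta_1,\Delta_2,D_\theta$, and the non-negativity of $\Delta_1$ to rewrite $\{S+\Delta_2\le 1-\Delta_1\}$ as $\{S+\Delta_1+\Delta_2\le 1\}$ (and analogously for $D_\theta$), yields
\[ |\Prob{S+\Delta_1+\Delta_2\le 1}-\Prob{D_\theta+\Delta_1\le 1}|\le d_K(S+\Delta_2,D_\theta). \]
Lemma~\ref{lem:D_theta_one}, applied to the independent pair $(D_\theta,\Delta_1)$, states that $\Prob{D_\theta+\Delta_1\le 1}$ equals $\Prob{D_\theta\le 1}-\tfrac{e^{-\theta\gamma}}{\Gamma(\theta)}\mathbf{E}[\Delta_1]$ up to an error of absolute value at most $\underline{c}_\theta\mathbf{E}[\Delta_1^2]$. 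Combining the last display with this estimate via the triangle inequality produces a bound on $|A|$, where $A:=\Prob{S+\Delta_1+\Delta_2\le 1}-\Prob{D_\theta\le 1}+\tfrac{e^{-\theta\gamma}}{\Gamma(\theta)}\mathbf{E}[\Delta_1]$.

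The exact same argument with the roles of $\Delta_1$ and $\Delta_2$ interchanged (noting that $\Prob{S+\Delta_1+\Delta_2\le 1}$ is symmetric in its two summands) delivers the analogous bound on $|B|$, where $B:=\Prob{S+\Delta_1+\Delta_2\le 1}-\Prob{D_\theta\le 1}+\tfrac{e^{-\theta\gamma}}{\Gamma(\theta)}\mathbf{E}[\Delta_2]$, with right-hand side $d_K(S+\Delta_1,D_\theta)+\underline{c}_\theta\mathbf{E}[\Delta_2^2]$. Since $A-B=\tfrac{e^{-\theta\gamma}}{\Gamma(\theta)}\mathbf{E}[\Delta_1-\Delta_2]$, the triangle inequality $|A-B|\le|A|+|B|$ yields exactly the stated lower bound on $d_K(S+\Delta_2,D_\theta)$ after rearrangement. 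I do not anticipate a serious obstacle: both ingredients---the pointwise Kolmogorov inequality integrated against the distribution of $\Delta_1$, and Lemma~\ref{lem:D_theta_one}---are immediately available, and the only non-obvious step is the symmetric use of $\Delta_1,\Delta_2$ that causes the common anchor $\Prob{S+\Delta_1+\Delta_2\le 1}$ to cancel upon subtraction.
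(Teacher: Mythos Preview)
Your proof is correct and follows essentially the same strategy as the paper: evaluate the Kolmogorov-distance inequality at the random point $1-\Delta_1$ (respectively $1-\Delta_2$), then invoke Lemma~\ref{lem:D_theta_one} to replace $\Prob{D_\theta+\Delta_i\le 1}$ by $\Prob{D_\theta\le 1}-\tfrac{e^{-\theta\gamma}}{\Gamma(\theta)}\mathbf{E}[\Delta_i]$ up to a second-moment error. The only cosmetic difference is that the paper splits into the two cases $\mathbf{E}[\Delta_1-\Delta_2]\ge 0$ and $<0$ and works with one-sided inequalities, whereas your symmetric anchor $\Prob{S+\Delta_1+\Delta_2\le 1}$ together with $|A-B|\le|A|+|B|$ handles both signs at once.
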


\begin{proof}
Throughout this proof we assume that $D_\theta$ lives on the same probability space as $S$, $\Delta_1$ and $\Delta_2$ and is independent from them. First let $\mathbf{E}[\Delta_1-\Delta_2]\geq 0$. Then, we obtain
\begin{align*}
d_K(S+\Delta_2,D_\theta) &\geq \Prob{S+\Delta_2\leq 1-\Delta_1} - \Prob{ D_\theta\leq 1-\Delta_1} \\
& = \Prob{S+\Delta_1\leq 1-\Delta_2} - \Prob{D_\theta\leq 1-\Delta_2} \\
& \qquad  + \Prob{ D_\theta\leq 1}- \Prob{ D_\theta\leq 1-\Delta_1} + \Prob{D_\theta\leq 1-\Delta_2} - \Prob{ D_\theta\leq 1}\\
& \geq -d_K(S+\Delta_1,D_\theta) + \frac{e^{-\theta\gamma}}{\Gamma(\theta)} \mathbf{E}[\Delta_1-\Delta_2] - \underline{c}_\theta \big(\mathbf{E}[ \Delta_1^2 ] + \mathbf{E}[\Delta_2^2] \big),
\end{align*}
where we used Lemma \ref{lem:D_theta_one} in the last step. For  $\mathbf{E}[\Delta_1-\Delta_2] < 0$, we analogously have
\begin{align*}
d_K(S+\Delta_2,D_\theta) & \geq \Prob{ D_\theta\leq 1-\Delta_1} - \Prob{S+\Delta_2\leq 1-\Delta_1} \\
& = \Prob{D_\theta\leq 1-\Delta_2} - \Prob{S+\Delta_1\leq 1-\Delta_2}\\
& \qquad + \Prob{ D_\theta\leq 1} - \Prob{D_\theta\leq 1-\Delta_2} + \Prob{ D_\theta\leq 1-\Delta_1} -\Prob{ D_\theta\leq 1} \\
& \geq -d_K(S+\Delta_1,D_\theta) + \frac{e^{-\theta\gamma}}{\Gamma(\theta)} \mathbf{E}[\Delta_2-\Delta_1] - \underline{c}_\theta \big(\mathbf{E}[ \Delta_1^2 ] + \mathbf{E}[\Delta_2^2] \big).
\end{align*}
Combining the two cases completes the proof.
\end{proof}

For $\theta>0$ we write $\lceil \theta \rceil$ and $\lfloor \theta \rfloor$ to denote the smallest integer larger than or equal to $\theta$ and the largest integer smaller than or equal to $\theta$, respectively. We will repeatedly use the fact that for $\theta>0$, $\beta \in \R$ and $l,n \in \N$ with $ n \ge l > -\beta$, there exist constants $\tilde{c}_1,\tilde{c}_2\in(0,\infty)$ depending only on $\theta$ and $\beta$ such that
\begin{equation}\label{eqn:approximations_sums}
	\bigg| \frac{1}{n} \sum_{k=\lceil \theta \rceil}^{l-1} \theta - \frac{\theta l}{n} \bigg| \leq \frac{\tilde{c}_1}{n} \quad \text{and} \quad \bigg| \frac{1}{n} \sum_{k=l}^n \frac{\beta\theta}{k+\beta} - \frac{\beta\theta \log(n/l)}{n} \bigg| \leq \frac{\tilde{c}_2}{n},
\end{equation}
where throughout the sequel, we take as a convention $\sum_{k=\lceil \theta \rceil}^{l-1}$ to be zero when $l <1+\lceil \theta \rceil$.
\begin{prop}\label{prop:optimality}
Let $\theta\geq 1$ and $\beta\in\mathbb{R}$.
\begin{enumerate}[(a)]
\item [(a)] Let $W_n$ be as in Theorem \ref{thm:Poi} with $l\in\mathbb{N}$ such that $l>-\beta$. There exists a constant $\underline{c}_{\theta,\beta}\in(0,\infty)$ depending only on $\theta$ and $\beta$ such that
$$
d_K(W_n,D_\theta) \geq \frac{\theta e^{-\theta\gamma}}{\Gamma(\theta)} \bigg| \frac{l}{n} + \beta \frac{\log(n/l)}{n} \bigg| - \underline{c}_{\theta,\beta} \bigg( \frac{l^2}{n^2} + \frac{1}{n} \bigg)
$$
for all $n\in\mathbb{N}$ with $n \ge l$.
\item [(b)] Let $W_n$ be as in Theorem~\ref{thm.ber} with $l\in\mathbb{N}$ such that $l\geq\theta-\beta$. If $\beta\geq 0$, there exists a constant $\underline{c}_{\theta,\beta}\in(0,\infty)$ depending only on $\theta$ and $\beta$ such that
\begin{equation}\label{eqn:lower_bound_Bernoulli_I}
d_K(W_n,D_\theta) \geq \frac{\theta e^{-\theta\gamma}}{\Gamma(\theta)} \bigg| \frac{l}{n} + \beta \frac{\log(n/l)}{n} \bigg| - \underline{c}_{\theta,\beta} \bigg( \frac{l^2}{n^2} + \frac{1}{n} \bigg)
\end{equation}
for all $n\in\mathbb{N}$ with $n \ge l$. If $\beta<0$, there exists a constant $\underline{c}_{\theta,\beta} \in(0,\infty)$ depending only on $\theta$ and $\beta$ such that for $n\in\mathbb{N}$,
\begin{equation}\label{eqn:lower_bound_Bernoulli_II}
d_K(W_n,D_\theta) \geq \frac{\theta e^{-\theta\gamma}}{\Gamma(\theta)}  \bigg|\frac{|\beta|\log (n/l)}{n} - \frac{l}{n} \bigg| - \frac{\theta e^{-\theta\gamma}}{\Gamma(\theta)} \frac{|\beta|\log (n/l)}{n}\sum_{k=l}^n \frac{\theta|\beta|}{(k+\beta)(k-\theta)} - \underline{c}_{\theta,\beta} \bigg( \frac{l^2}{n^2} + \frac{1}{n}\bigg).
\end{equation}
\end{enumerate}
\end{prop}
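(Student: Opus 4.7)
The plan is to apply Lemma~\ref{lem:lower_bound} with mutually independent non-negative random variables $S,\Delta_1,\Delta_2$ such that $S+\Delta_2=W_n$, the Kolmogorov distance $d_K(S+\Delta_1,D_\theta)$ is controlled by Theorem~\ref{thm:Poi} (for Part~(a)) or Theorem~\ref{thm.ber'} (for Part~(b)), and $\mathbf{E}[\Delta_1-\Delta_2]$ captures, up to an $O(1/n)$ error, the mean gap
\[
\mathbf{E}[D_\theta]-\mathbf{E}[W_n] = \frac{\theta l}{n}+\frac{\theta\beta\log(n/l)}{n}+O(1/n),
\]
which follows from a direct computation using \eqref{eqn:approximations_sums}.

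For Part~(a) with $\beta\ge 0$, the identity $\theta/k = \theta/(k+\beta) + \theta\beta/(k(k+\beta))$ combined with the additivity of independent Poissons suggests the following construction: let $R_k\sim\mathrm{Poi}(\theta\beta/(k(k+\beta)))$ for $l\le k\le n$ and $Q_k\sim\mathrm{Poi}(\theta/k)$ for $1\le k\le l-1$ be mutually independent and independent of $(P_k)_{k=l}^n$, and set $S=W_n$, $\Delta_2=0$, $\Delta_1=n^{-1}\sum_{k=l}^n kR_k+n^{-1}\sum_{k=1}^{l-1}kQ_k$. Then $S+\Delta_1=n^{-1}\sum_{k=1}^nk\widetilde P_k$ with $\widetilde P_k\sim\mathrm{Poi}(\theta/k)$ independent, so Theorem~\ref{thm:Poi} (with $\beta=0$, $l=1$) yields $d_K(S+\Delta_1,D_\theta)\le C/n$. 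Computations using \eqref{eqn:approximations_sums} give $\mathbf{E}[\Delta_1]=\theta l/n+\theta\beta\log(n/l)/n+O(1/n)$, and summing the Poisson variances shows $\mathbf{E}[\Delta_1^2]=O(l^2/n^2+1/n)$. Substituting into Lemma~\ref{lem:lower_bound} delivers \eqref{eqn:lower_bound_Bernoulli_I}. For $\beta<0$ the construction is reversed: decompose $P_k=\widetilde R_k+Q_k'$ into independent Poissons with parameters $\theta/k$ and $-\theta\beta/(k(k+\beta))\ge 0$, and put $S=n^{-1}\sum_{k=l}^n k\widetilde R_k$, $\Delta_2=n^{-1}\sum_{k=l}^n kQ_k'$, and $\Delta_1=n^{-1}\sum_{k=1}^{l-1}kR_k^\ast$ with $R_k^\ast\sim\mathrm{Poi}(\theta/k)$ independent. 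Once again $S+\Delta_1$ has the $\beta=0$, $l=1$ form, and the rest of the analysis parallels the previous case.

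Part~(b) with $\beta\ge 0$ is handled identically using independent Bernoullis, exploiting that Theorem~\ref{thm.ber'} allows $Z_k=B_k+\widetilde B_k$ to be a sum of two independent Bernoullis with $p_k^{(1)}=\theta/(k+\beta)$ and $p_k^{(2)}=\theta\beta/(k(k+\beta))$ summing exactly to $\theta/k$. This yields $\tau=0$ in Theorem~\ref{thm.ber'} and hence $d_K(S+\Delta_1,D_\theta)\le C/n$.

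The main obstacle is Part~(b) with $\beta<0$: $\mathrm{Ber}(\theta/(k+\beta))$ cannot be written as a sum of two independent Bernoullis whose parameters add up to $\theta/k$, since this would require a negative second parameter. The best one can do is to take $Z_k=B_k$ for $k\ge l$, which forces a non-zero $\tau=|\beta|(\theta+|\beta|)$ in Theorem~\ref{thm.ber'} and raises the upper bound on $d_K(S+\Delta_1,D_\theta)$ by a term of order $|\beta|\log(n/l)/n$. Tracking this contribution $k$-by-$k$ rather than through the uniform worst-case bound is what produces the additional correction $\frac{\theta e^{-\theta\gamma}}{\Gamma(\theta)}\cdot\frac{|\beta|\log(n/l)}{n}\sum_{k=l}^n\frac{\theta|\beta|}{(k+\beta)(k-\theta)}$ appearing in \eqref{eqn:lower_bound_Bernoulli_II}. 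The second-moment estimates on $\Delta_1,\Delta_2$ are obtained as before, and combining everything completes the proof.
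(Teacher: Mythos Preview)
Your treatment of part~(a) and of part~(b) with $\beta\geq 0$ is correct and matches the paper's argument: the same decompositions into $S,\Delta_1,\Delta_2$, the same appeal to Lemma~\ref{lem:lower_bound}, and the same use of Theorem~\ref{thm:Poi} (resp.\ Theorem~\ref{thm.ber'} with $\tau=0$) to control $d_K(S+\Delta_1,D_\theta)$.

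For part~(b) with $\beta<0$, however, your proposal is not a proof. You never specify $S,\Delta_1,\Delta_2$; the phrase ``take $Z_k=B_k$'' followed by ``tracking this contribution $k$-by-$k$'' does not describe a concrete argument. If you intend $S=W_n$, $\Delta_2=0$, and $\Delta_1$ a prefix sum, then $d_K(S+\Delta_1,D_\theta)$ is only bounded by $O((l+|\beta|\log(n/l))/n)$ via Theorem~\ref{thm.ber'} with $\tau>0$; feeding this into Lemma~\ref{lem:lower_bound} would \emph{subtract} a term of order $|\beta|\log(n/l)/n$ and kill the main term, not produce the multiplicative correction $\frac{|\beta|\log(n/l)}{n}\sum_{k=l}^n\frac{\theta|\beta|}{(k+\beta)(k-\theta)}$ in~\eqref{eqn:lower_bound_Bernoulli_II}. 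There is no refinement of the \emph{upper} bound in Theorem~\ref{thm.ber'} that would yield a product of this specific shape.

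The paper does not go through Lemma~\ref{lem:lower_bound} here at all. Instead it couples $W_n$ and $W_n'=n^{-1}\sum_{k=l}^n kB_k'$ (with $B_k'\sim\mathrm{Ber}(\theta/k)$) on one probability space via the identity $B_k=_d B_k'+\mathds{1}\{B_k'=0\}B_k''$ with $B_k''\sim\mathrm{Ber}\big(\theta|\beta|/((k+\beta)(k-\theta))\big)$, and estimates $\Prob{D_\theta\le 1-\Delta_n}-\Prob{W_n\le 1-\Delta_n}$ directly. The crucial step is bounding $\Prob{W_n'+\Delta_n\le 1}-\Prob{W_n+\Delta_n\le 1}$ from below by summing over the events $A_k=\{B_k'=0,\,B_k''=1,\,B_j''=0\text{ for }j\ne k\}$; the factor $p_{l,n}=\Prob{B_j''=0\text{ for all }j}\ge 1-\sum_{k=l}^n\theta|\beta|/((k+\beta)(k-\theta))$ is exactly what generates the correction term in~\eqref{eqn:lower_bound_Bernoulli_II}. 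A second, separate comparison (using a stochastic domination $W_n''\ge W_n$) handles the other sign inside the absolute value. None of this structure is present in your outline.
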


\begin{remark}\label{rem:opt}
	Note that the lower bounds in Theorems~\ref{thm.ber} and \ref{thm:Poi} are for fixed $l$  
	and $n\to\infty$. A more general situation is when $l$ is allowed to  
	depend on $n$. For this case we obtain lower bounds of the same orders  
	as the upper bounds in Theorems~\ref{thm.ber} and \ref{thm:Poi} for $\theta\in(0,1)$ from  
	Proposition~\ref{prop:intapp}, and for $\theta\ge 1$ and $\beta> 0$, from  
	Proposition~\ref{prop:optimality}. In the case when $\theta \ge 1$ and $\beta=0$, the optimal order follows from Propositions~\ref{prop:intapp} and \ref{prop:optimality}, depending on the size of $l$. For $\beta<0$ and $\theta\ge 1$ it can happen that  
	the two terms $l/n$ and $\beta \log (n/l)/n$ inside the absolute values on the right-hand sides of the  
	lower bounds in Proposition~\ref{prop:optimality} cancel each other out so that the lower bounds are not of the desired orders. This behaviour for $\beta<0$ leads to  
	the following open question. If we think of $\mathbf{P}(W_n\leq t)$ for  
	fixed $n\in\mathbb{N}$ and $t\geq 0$ as a function of $l$ and $\beta$,  
	it is non-decreasing in both arguments. Since for $\beta=0$ and fixed $l \in \N$, by Theorems~\ref{thm.ber} and \ref{thm:Poi} we obtain a faster $1/n$ rate of convergence, one can  
	now wonder if, for $\beta<0$, it is possible to choose $l$ as a function of $n$ in such a way that these opposite  
	effects cancel out uniformly over all $t\geq 0$ and one still obtains a  
	$1/n$ rate of convergence.
\end{remark}

\begin{proof}[Proof of Proposition~\ref{prop:optimality}]

First we consider the Poisson case (a). For $\beta\geq 0$, define $\Delta_{1,n}$ independent of $W_n= n^{-1} \sum_{k=l}^n kP_k$ as
$$
\Delta_{1,n}=\frac{1}{n}\sum_{k=\lceil \theta \rceil}^{l-1} k P_k' + \frac{1}{n} \sum_{k=l}^n k P_k''
$$
with independent random variables $P_k'\sim \operatorname{Poi}\big( \frac{\theta}{k} \big)$ for $k\in\{\lceil \theta \rceil,\hdots,l-1\}$ and $P_k''\sim \operatorname{Poi}\big( \frac{\theta}{k} - \frac{\theta}{k+\beta} \big)$ for $k\in\{l,\hdots,n\}$. By \eqref{eqn:approximations_sums}, we have
$$
\mathbf{E}[\Delta_{1,n}] = \frac{1}{n} \sum_{k=\lceil \theta \rceil}^{l-1} \theta + \frac{1}{n} \sum_{k=l}^n \frac{\beta \theta}{k+\beta} \geq \frac{\theta l}{n} + \frac{\beta \theta \log(n/l)}{n} - \frac{\tilde{c}_1+\tilde{c}_2}{n},
$$
while
$$
\mathbf{E}[\Delta_{1,n}^2] = \mathbf{E}[\Delta_{1,n}]^2 + \frac{1}{n^2} \sum_{k=\lceil \theta \rceil}^{l-1} \theta k + \frac{1}{n^2} \sum_{k=l}^n \frac{\beta\theta k}{k+\beta} \leq \mathbf{E}[\Delta_{1,n}]^2 + \frac{\theta l^2}{n^2} + \frac{\beta\theta}{n}.
$$
From the upper bound in Theorem \ref{thm:Poi}, we know that
$$
d_K(W_n+\Delta_{1,n},D_\theta) \leq \frac{C_\theta}{n}
$$
for some $C_\theta \in (0,\infty)$ depending only on $\theta$. Now the assertion follows from Lemma \ref{lem:lower_bound} with $S=W_n$, $\Delta=\Delta_{1,n}$ and $\Delta_2=0$.

For the case when $\beta <0$, we choose
$$
S_n = \frac{1}{n} \sum_{k=l}^{n} k P_k', \quad \Delta_{1,n} = \frac{1}{n} \sum_{k=\lceil \theta \rceil}^{l-1} k P_k'' \quad \text{and} \quad \Delta_{2,n} = \frac{1}{n} \sum_{k=l}^n k P_k'''
$$
with independent $P_k'\sim \operatorname{Poi}\big( \frac{\theta}{k} \big)$ and $P_k'''\sim \operatorname{Poi}\big( \frac{\theta}{k+\beta} - \frac{\theta}{k} \big)$ for $k\in\{l,\hdots,n\}$, and $P_k''\sim \operatorname{Poi}\big( \frac{\theta}{k} \big)$ for $k\in\{\lceil \theta \rceil,\hdots,l-1\}$. Note that
$$
\mathbf{E}[\Delta_{1,n}-\Delta_{2,n}] = \frac{1}{n} \sum_{k=\lceil \theta \rceil}^{l-1} \theta + \frac{1}{n} \sum_{k=l}^n \frac{\beta \theta}{k+\beta}
$$
so that, by \eqref{eqn:approximations_sums},
$$
|\mathbf{E}[\Delta_{1,n}-\Delta_{2,n}]| \ge \theta \bigg| \frac{l}{n} + \beta \frac{\log(n/l)}{n} \bigg| - \frac{\tilde{c}_1+\tilde{c}_2}{n}.
$$
Moreover, we have
$$
\mathbf{E}[\Delta_{1,n}^2] = \mathbf{E}[\Delta_{1,n}]^2 + \frac{1}{n^2} \sum_{k=\lceil \theta \rceil}^{l-1} \theta k \quad \text{and} \quad \mathbf{E}[\Delta_{2,n}^2] = \mathbf{E}[\Delta_{2,n}]^2 - \frac{1}{n^2} \sum_{k=l}^n \frac{\beta \theta k}{k+\beta} \leq \mathbf{E}[\Delta_{2,n}]^2 - \frac{\beta \theta l}{(l+\beta) n}.
$$
Now the upper bound in Theorem \ref{thm:Poi} yields
$$
d_K(S_n+\Delta_{1,n},D_\theta) \leq \frac{C_\theta}{n}.
$$
Note that for $\N \ni l >-\beta$, one has $l/(l+\beta) \le c_\beta$ for some $c_\beta \in (0,\infty)$ depending only on $\beta$. Since $W_n$ has the same distribution as $S_n+\Delta_{2,n}$, an application of Lemma \ref{lem:lower_bound} now completes the proof of (a).

We move on to the proof of the Bernoulli case (b). For $\beta\geq 0$, define $\Delta_{1,n}$ independent of $W_n$ as
$$
\Delta_{1,n}= \frac{1}{n} \sum_{k=\lceil \theta\rceil}^{l-1} k B_k' + \frac{1}{n} \sum_{k=l}^n k B_k''
$$
with independent random variables $B_k'\sim \operatorname{Ber}\big( \frac{\theta}{k} \big)$ for $k\in\{\lceil \theta\rceil,\hdots,l-1\}$ and $B_k''\sim \operatorname{Ber}\big( \frac{\theta}{k}-\frac{\theta}{k+\beta} \big)$ for $k\in\{l,\hdots,n\}$. Together with \eqref{eqn:approximations_sums}, we obtain
$$
\mathbf{E}[\Delta_{1,n}] = \frac{1}{n} \sum_{k=\lceil \theta\rceil}^{l-1} \theta + \frac{1}{n} \sum_{k=l}^n \frac{\beta\theta}{k+\beta} \geq \frac{\theta l}{n} + \frac{\beta \theta \log(n/l)}{n} - \frac{\tilde{c}_1+\tilde{c}_2}{n}
$$
and
$$
\mathbf{E}[\Delta_{1,n}^2] \leq \mathbf{E}[\Delta_{1,n}]^2 + \frac{1}{n^2} \sum_{k=\lceil \theta\rceil}^{l-1}\theta k + \frac{1}{n^2} \sum_{k=l}^n \frac{\beta\theta k}{k+\beta}.
$$
Using Theorem \ref{thm.ber'} with $\tau=0$, we have that
\begin{equation*}
d_K(W_n+\Delta_{1,n},D_\theta) \le \frac{C_{\theta}}{n}
\end{equation*}
with a constant $C_{\theta}$ depending only on $\theta$. Now Lemma \ref{lem:lower_bound} with $S=W_n$, $\Delta_1=\Delta_{1,n}$ and $\Delta_2=0$ leads to \eqref{eqn:lower_bound_Bernoulli_I}.

Finally, we consider the case when $\beta<0$. Let $(B_{k}')_{k\geq \lceil \theta\rceil}$ be i.i.d.\ random variables such that $B_k'\sim\operatorname{Ber}(\theta/k)$ and define
$$
W_n'=\frac{1}{n} \sum_{k=l}^n k B_k' \quad \text{and} \quad \Delta_n = \frac{1}{n} \sum_{k=\lceil\theta\rceil}^{l-1} k B_k'.
$$
Moreover, let $(B_{k}'')_{k\geq \lceil \theta - \beta\rceil}$ be i.i.d.\ random variables independent from $(B_{k}')_{k\geq \lceil \theta\rceil}$ and satisfying $B_k''\sim\operatorname{Ber}\bigg(\frac{\theta |\beta|}{(k+\beta)(k-\theta)}\bigg)$. Notice that $W_n$ has the same distribution as
$$
\frac{1}{n} \sum_{k=l}^n k (B_k' + \mathds{1}\{B_k'=0\} B_k'' ),
$$
and in the sequel we define $W_n$ on the same probability space in this way. For $D_\theta$ independent of $\Delta_n$, by the independence of $W_n$ and $\Delta_n$ we have
\begin{align}\label{eqn:lower_bound_coupling}
d_K(W_n,D_\theta) & \geq \Prob{ D_\theta \leq 1 - \Delta_n } - \Prob{ W_n \leq 1 - \Delta_n } \nonumber\\
& \geq \Prob{ W_n'+\Delta_n \leq 1} - \Prob{ W_n + \Delta_n \leq 1 } + \Prob{D_\theta\leq 1} - \Prob{ W_n'+\Delta_n \leq 1} \nonumber\\
& \qquad + \Prob{D_\theta\leq 1- \Delta_n } - \Prob{D_\theta\leq 1}.
\end{align}
In the sequel we use the abbreviations $S_n=W_n+\Delta_n$ and $S_n'=W_n'+\Delta_n$. For $k\geq l$ let $A_k$ be the event that $B_k'=0$, $B''_k=1$ and $B''_j=0$ for all $j\neq k$ and $l \le j \le n$. Note that with $p_{l,n}:=\Prob{ B''_j=0 \text{ for all } l \le j \le n }$,
$$
\Prob{A_k}\geq \frac{p_{l,n} \theta |\beta|}{(k+\beta)k} 
$$
for $l \le k \le n$. For $k\in\mathbb{N}$ with $l\leq k\leq n$ let $S_{n,k}'= S_n'- \frac{k B_k'}{n}$. Notice that, by our coupling, $W_n$ dominates $W_n'$ and if $A_k$ occurs for some $l\leq k\leq n$, then $W_n = W_n' + k/n$. Thus,
\begin{align}
\Prob{ W_n'+\Delta_n \leq 1} - \Prob{ W_n + \Delta_n \leq 1 }
& = \Prob{ S_n' \leq 1 } - \Prob{ S_n \leq 1 }\nonumber\\ 
& \geq \sum_{k=l}^n \Prob{ S_n' \leq 1, A_k} - \Prob{ S_n \leq 1, A_k } \nonumber \\
& = \sum_{k=l}^n \Prob{ S_{n,k}' \leq 1, A_k } - \mathbf{P}\bigg\{ S_{n,k}' + \frac{k}{n} \leq 1, A_k \bigg\} \nonumber \\
& \geq \sum_{k=l}^n \frac{p_{l,n} \theta |\beta|}{k (k+\beta)} \bigg(\Prob{ S_{n,k}' \leq 1} - \mathbf{P}\bigg\{ S_{n,k}' + \frac{k}{n} \leq 1\bigg\}\bigg), \label{eqn:lower_bound_W_W'}
\end{align}
where in the final step, we have used the independence of $S_{n,k}'$ and $A_k$. Conditioning on $B_k'$ we have
$$
\Prob{ S_n' \leq 1 } = \frac{k-\theta}{k} \Prob{ S_{n,k}' \leq 1 } + \frac{\theta}{k} \mathbf{P}\bigg\{ S_{n,k}' + \frac{k}{n} \leq 1 \bigg\}
$$
so that
\begin{align*}
\frac{k-\theta}{k} \bigg( \Prob{ S_{n,k}' \leq 1 } - \mathbf{P}\bigg\{ S_{n,k}' + \frac{k}{n} \leq 1 \bigg\} \bigg) = \Prob{ S_n' \leq 1 } - \mathbf{P}\bigg\{ S_{n,k}' + \frac{k}{n} \leq 1 \bigg\}.
\end{align*}
Since
$$
\mathbf{P}\bigg\{ S_n' + \frac{k}{n} \leq 1 \bigg\} = \frac{k-\theta}{k} \mathbf{P}\bigg\{ S_{n,k}' + \frac{k}{n} \leq 1 \bigg\} + \frac{\theta}{k} \mathbf{P}\bigg\{ S_{n,k}' + \frac{2k}{n} \leq 1 \bigg\}
$$
and, thus,
\begin{align*}
\mathbf{P}\bigg\{ S_{n,k}' + \frac{k}{n} \leq 1 \bigg\} & =  \mathbf{P}\bigg\{ S_{n}' + \frac{k}{n} \leq 1 \bigg\} + \frac{\theta}{k-\theta} \bigg( \mathbf{P}\bigg\{ S_{n}' + \frac{k}{n} \leq 1 \bigg\} - \mathbf{P}\bigg\{ S_{n,k}' + \frac{2k}{n} \leq 1 \bigg\}  \bigg),
\end{align*}
plugging in above, we obtain
\begin{align}\label{eq:Sbd}
&\frac{k-\theta}{k}\bigg(\Prob{S'_{n,k} \leq 1 } - \mathbf{P}\bigg\{S'_{n,k} + \frac{k}{n} \leq 1 \bigg\} \bigg) \nonumber\\
&= \Prob{S'_{n} \leq 1 } - \mathbf{P}\bigg\{S'_{n} + \frac{k}{n} \leq 1 \bigg\} 
- \frac{\theta}{k-\theta} \bigg( \mathbf{P}\bigg\{S'_{n} + \frac{k}{n} \leq 1 \bigg\} - \mathbf{P}\bigg\{S'_{n,k} + \frac{2k}{n} \leq 1 \bigg\}\bigg).
\end{align}
Note that by Lemma \ref{lem:D_theta_one}
\begin{align*}
\Prob{S'_{n} \leq 1 } - \mathbf{P}\bigg\{S'_{n} + \frac{k}{n} \leq 1 \bigg\}
& \geq \Prob{ D_\theta \leq 1 } - \mathbf{P}\bigg\{ D_\theta+\frac{k}{n} \leq 1 \bigg\} - 2 d_K(S_n',D_\theta) \\
& \geq \frac{e^{-\theta\gamma}}{\Gamma(\theta)} \frac{k}{n} - \underline c_\theta \frac{k^2}{n^2} - 2 d_K(S_n',D_\theta)
\end{align*}
and, by Lemma \ref{lem:bound_Dickman}
\begin{multline*}
0  \leq \mathbf{P}\bigg\{S'_{n} + \frac{k}{n} \leq 1 \bigg\} - \mathbf{P}\bigg\{S'_{n,k} + \frac{2k}{n} \leq 1 \bigg\} \\
 \leq \mathbf{P}\bigg\{S'_{n} + \frac{k}{n} \leq 1 \bigg\} - \mathbf{P}\bigg\{S'_{n} + \frac{2k}{n} \leq 1 \bigg\}  = \mathbf{P}\bigg\{ 1- \frac{2k}{n} < S'_{n} \leq 1 - \frac{k}{n} \bigg\} \\
 \leq \mathbf{P}\bigg\{ 1- \frac{2k}{n} < D_\theta \leq 1 - \frac{k}{n} \bigg\} + 2 d_K(S_n',D_\theta) \leq \frac{K_\theta k}{n} + 2 d_K(S_n',D_\theta).
\end{multline*}
Thus, plugging the above bounds in \eqref{eq:Sbd}, for $l \le k \le n$ we obtain
\begin{align*}
\Prob{S'_{n,k} \leq 1 } - \mathbf{P}\bigg\{S'_{n,k} + \frac{k}{n} \leq 1 \bigg\} & \geq \frac{k}{k-\theta} \bigg( \frac{e^{-\theta\gamma}}{\Gamma(\theta)} \frac{k}{n} - \underline c_\theta \frac{k^2}{n^2} - 2 d_K(S_n',D_\theta) \bigg) \\
& \quad - \frac{\theta k}{(k-\theta)^2} \bigg( \frac{K_\theta k}{n} + 2 d_K(S_n',D_\theta) \bigg).
\end{align*}
Combining this with \eqref{eqn:lower_bound_W_W'} leads to
$$
 \Prob{ W_n'+\Delta_n \leq 1} - \Prob{ W_n + \Delta_n \leq 1 } \geq \frac{e^{-\theta\gamma}}{\Gamma(\theta)} p_{l,n} \theta |\beta| \frac{1}{n} \sum_{k=l}^{n} \frac{k}{(k+\beta)(k-\theta)} - \frac{\underline{c}_1}{n} - \underline{c}_2 d_K(S_n',D_\theta)
$$
with constants $\underline{c}_1,\underline{c}_2\in(0,\infty)$ depending only on $\theta$ and $\beta$. From Lemma \ref{lem:D_theta_one} we deduce that
\begin{equation}\label{eqn:comparison_Delta_n}
\bigg|\Prob{D_\theta\leq 1- \Delta_n } - \Prob{D_\theta\leq 1} + \frac{e^{-\theta\gamma}}{\Gamma(\theta)} \frac{1}{n} \sum_{k=\lceil \theta \rceil}^{l-1} \theta \bigg| \leq \underline{c}_\theta \bigg( \frac{1}{n} \sum_{k=\lceil \theta \rceil}^{l-1} \theta \bigg)^2 + \underline{c}_\theta \frac{1}{n^2} \sum_{k=\lceil \theta \rceil}^{l-1} \theta k \leq \underline{c}_3 \frac{l^2}{n^2}
\end{equation}
for some constant $\underline{c}_3 \in (0,1)$ depending only on $\theta$. On the other hand,
$$
\Prob{D_\theta\leq 1} - \Prob{ W_n'+\Delta_n \leq 1} \ge -d_K(S_n',D_\theta).
$$
Now, the upper bound in Theorem \ref{thm.ber} implies
\begin{equation}\label{eqn:approximation_S_n'}
d_K(S_n',D_\theta) \leq \frac{C_{\theta}}{n}
\end{equation}
with a constant $C_{\theta} \in(0,\infty)$ depending only on $\theta$. Together with \eqref{eqn:lower_bound_coupling}, and
\begin{equation}\label{eq:plb}
p_{l,n} \geq 1 - \sum_{k=l}^n \frac{\theta|\beta|}{(k+\beta)(k-\theta)},
\end{equation}
the previous estimates lead to
\begin{equation}\label{eqn:lower_bound_Bernoulli_part_I}
d_K(W_n,D_\theta) \geq \frac{\theta e^{-\theta\gamma}}{\Gamma(\theta)} \bigg( \frac{|\beta|\log (n/l)}{n} - \frac{l}{n} \bigg) - \frac{\theta e^{-\theta\gamma}}{\Gamma(\theta)} \frac{|\beta|\log (n/l)}{n} \sum_{k=l}^n \frac{\theta|\beta|}{(k+\beta)(k-\theta)} - C \bigg(\frac{1}{n} + \frac{l^2}{n^2}\bigg)
\end{equation}
with a constant $C\in(0,\infty)$ depending only on $\theta$ and $\beta$.

For $k\in\mathbb{N}$ with $k\geq l \ge \theta - \beta$ let $p_k$ be such that
$$
(1-p_k)\frac{\theta}{k} +  p_k = \frac{\theta}{k+\beta},
$$
which is equivalent to
$$
p_k = \frac{|\beta| \theta}{(k-\theta)(k+\beta)}.
$$
Notice that $p_k$ is necessarily in $\in(0,1]$. Let $B_k'''\sim \operatorname{Ber}(p_k)$ be independent random variables that are also independent from $(B_k')_{k\geq \lceil \theta\rceil}$. Define
$$
W_n''=\frac{1}{n} \sum_{k=l}^n k (B_k'+B_k''').
$$
Since
$
\mathbf{P}\{B_k'+B_k'''\geq 1\}=(\theta/k)(1-p_k) + p_k = \frac{\theta}{k+\beta}$, we have $\mathbf{P}(W_n\leq t) \geq \mathbf{P}(W_n''\leq t)$ for all $t\in\mathbb{R}$. With $\Delta_n = \frac{1}{n} \sum_{k=\lceil\theta\rceil}^{l-1} k B_k'$ as above independent of $W_n$, this implies
\begin{equation}\label{eqn:upper_bound_coupling}
\begin{split}
d_K(W_n,D_\theta) & \geq \Prob{ W_n \leq 1 - \Delta_n } - \Prob{ D_\theta \leq 1 - \Delta_n } \\
& \geq \Prob{ W_n'' \leq 1 - \Delta_n } - \Prob{ D_\theta \leq 1 - \Delta_n } \\
& = \Prob{ W_n'' + \Delta_n \leq 1 } - \Prob{ D_\theta \leq 1 } + \Prob{ D_\theta \leq 1 } - \Prob{ D_\theta \leq 1 - \Delta_n }.
\end{split}
\end{equation}
Note that $W_n'' + \Delta_n=\frac{1}{n} \sum_{k=\lceil \theta \rceil}^n k B_k' + \frac{1}{n} \sum_{k=l}^n k B_k'''=: S_n'+R_n$. We have
$$
\mathbf{E}[R_n] = \frac{1}{n} \sum_{k=l}^n kp_k = \frac{1}{n} \sum_{k=l}^n \frac{|\beta| \theta k}{(k-\theta)(k+\beta)}
\quad \text{and} \quad \mathbf{E}[R_n^2 ] \leq \mathbf{E}[R_n]^2 + \frac{1}{n^2} \sum_{k=l}^n k^2 p_k.
$$
Since there exist constants $C_1,C_2,C_3\in(0,\infty)$ depending only on $\theta$ and $\beta$ such that
$$
\bigg| \mathbf{E}[R_n] - \frac{|\beta|\theta \log(n/l)}{n} \bigg| \leq \frac{C_1}{n} \quad \text{and} \quad
\mathbf{E}[ R_n^2] \leq C_2 \bigg( \frac{\log(n/l)^2}{n^2}  + \frac{1}{n}\bigg) \le \frac{C_3}{n},
$$
Lemma \ref{lem:D_theta_one} and \eqref{eqn:approximation_S_n'} lead to
\begin{multline*}
\Prob{ W_n'' + \Delta_n \leq 1 } - \Prob{ D_\theta \leq 1 }  = \Prob{ S_n' \leq 1 - R_n } -  \Prob{ D_\theta \leq 1 - R_n } \\
  \qquad \qquad \qquad \qquad - \left(\Prob{ D_\theta \leq 1 } - \Prob{ D_\theta \leq 1 - R_n } \right)\\
 \ge - d_K(S_n',D_\theta) - \frac{\theta e^{-\theta\gamma}}{\Gamma(\theta)} \frac{|\beta| \log(n/l) + C_1}{n} - \underline{c}_\theta  \frac{C_3}{n} \ge - \frac{\theta e^{-\theta\gamma}}{\Gamma(\theta)} \frac{|\beta| \log(n/l)}{n} - \frac{C_4}{n}
\end{multline*}
with a constant $C_4\in(0,\infty)$ depending only on $\theta$ and $\beta$. Thus, we obtain from \eqref{eqn:upper_bound_coupling} and \eqref{eqn:comparison_Delta_n} that
\begin{equation}\label{eqn:lower_bound_Bernoulli_part_II}
d_K(W_n,D_\theta) \geq \frac{\theta e^{-\theta\gamma}}{\Gamma(\theta)} \bigg( \frac{l}{n} - \frac{|\beta| \log(n/l)}{n} \bigg) -  \frac{\widetilde{C}_4}{n}- \underline{c}_3 \frac{l^2}{n^2}
\end{equation}
with a constant $\widetilde{C}_4\in(0,\infty)$ depending only on $\theta$ and $\beta$. Combining \eqref{eqn:lower_bound_Bernoulli_part_I} and \eqref{eqn:lower_bound_Bernoulli_part_II} proves \eqref{eqn:lower_bound_Bernoulli_II}.
\end{proof}

	Let $S$ and $\Delta$ be non-negative random variables with $\Delta \le a \in (0,\infty)$ almost surely, and let $\theta \ge 1$. Then, one has that
		\begin{equation}\label{eq:indadd'} 
		d_K(S+\Delta,D_\theta) \ge  d_K(S,D_\theta) - K_\theta a.
	\end{equation}
	Indeed, this follows by noticing first that for $t \in \R_+$ such that $\Prob{S \le t} \ge \Prob{D_\theta \le t}$, by Lemma~\ref{lem:bound_Dickman} we have
	\begin{align*}
		0 &\le \Prob{S \le t} - \Prob{D_\theta \le t} \\
		&\le \Prob{S +\Delta \le t+a} - \Prob{D_\theta \le t+a} + \Prob{D_\theta \le t+a}- \Prob{D_\theta \le t} \le d_K(S+\Delta, D_\theta) + K_\theta a.
	\end{align*}
On the other hand, for $t \in \R_+$ such that $\Prob{S \le t} < \Prob{D_\theta \le t}$, by Lemma~\ref{lem:bound_Dickman} we obtain
	$$
	0 < \Prob{D_\theta \le t} - \Prob{S \le t} \le \Prob{D_\theta \le t} - \Prob{S+\Delta \le t} \le d_K(S+\Delta, D_\theta).
	$$
Combining these two inequalities yields \eqref{eq:indadd'}.

\begin{proof}[Proofs of optimality of the bounds in Theorems~\ref{thm.ber} and \ref{thm:Poi}]
	To prove the optimality (in $n$) of the bound in Theorem~\ref{thm.ber}, first notice that when $\theta \in (0,1)$, or when $\theta \ge 1$ and $\beta=0$, the optimality of our bound follows directly from Proposition~\ref{prop:intapp}. In the case when $\theta\geq 1$ and $\beta > 0$, for $n$ large enough the desired lower bound holds due to \eqref{eqn:lower_bound_Bernoulli_I} in Proposition \ref{prop:optimality}(b). When $\theta \ge 1$ and $\beta<0$, notice that there exists a positive integer $N_{\theta,\beta}$ depending only on $\theta$ and $\beta$ such that for $m \ge N_{\theta,\beta}$,
		\begin{equation*}
			1 - \sum_{k=m}^n \frac{\theta|\beta|}{(k+\beta)(k-\theta)} \ge \frac{1}{2}.
		\end{equation*}
		Thus, for $n$ large enough, the result follows by \eqref{eqn:lower_bound_Bernoulli_II} in Proposition \ref{prop:optimality}(b), when $l \ge \max\{N_{\theta,\beta}, \theta-\beta\} =: \widetilde N_{\theta,\beta}$. If $\theta - \beta \le l <\widetilde N_{\theta,\beta}$, then noting that $\sum_{k=l}^{\widetilde N_{\theta,\beta} -1} k B_k \le \widetilde N^2_{\theta,\beta}$, by \eqref{eq:indadd'} and \eqref{eqn:lower_bound_Bernoulli_II} we have
		\begin{align*}
			d_K(W_n,D_\theta) &\ge d_K\left(\frac{1}{n} \sum_{k=\widetilde N_{\theta,\beta}}^n k B_k, D_\theta\right) - \frac{K_\theta {\widetilde N_{\theta,\beta}}^2}{n}\\
			& \ge \frac{\theta e^{-\theta\gamma}}{\Gamma(\theta)} \bigg( \frac{|\beta| \log (n/\widetilde N_{\theta,\beta})}{2n} - \frac{\widetilde N_{\theta,\beta}}{n} \bigg) - \underline{c}_{\theta,\beta} \bigg(\frac{1}{n} + \frac{{\widetilde N_{\theta,\beta}}^2}{n^2}\bigg) - \frac{K_\theta {\widetilde N_{\theta,\beta}}^2}{n}
		\end{align*}
		proving the desired lower bound for $n$ sufficiently large. Finally, notice that for small values of $n$, since $W_n$ is different from $D_\theta$ in distribution, we have $d_K(W_n,D_\theta) \ge c_n$ for some $c_n>0$. Thus, taking the minimum of these $c_n$'s as a lower bound for small values of $n$, the result follows for all $n \ge l$.
	
	Finally the optimality (in $n$) of the bound in Theorem~\ref{thm:Poi} follows from Proposition~\ref{prop:intapp} when $\theta \in (0,1)$, or when $\theta \ge 1$ and $\beta=0$, while Proposition \ref{prop:optimality}(a) yields the lower bound when $\theta \ge 1$ and $\beta \neq 0$ for $n$ large enough. Taking care of the small values of $n$ similarly as above, we obtain the desired result.
\end{proof}

\section{Proofs of Corollaries~\ref{cor:Q}, \ref{cor:IT} and Theorem~\ref{thm:imp}}\label{sec:mainresults} In this final section, we collect the proofs of our remaining results.

\begin{proof}[Proof of Corollary~\ref{cor:Q}]
	We start by showing (for completeness) a distributional equality that also follows from \cite[Equation~(3.2) and the two equations above it]{Hwa}. Without loss of generality, we assume that the numbers are $\{1,\hdots, n\}$, and we can generate the random keys by taking a uniform random permutation $\sigma$ of $\{1,\hdots,n\}$. Notice that a $k\in\{1,\hdots,n\}$ is used as a partitioning key if and only if $k$ appears before $1,\hdots, k-1$ in $\sigma$. In this case, we write $\sigma \in A_k$. It is straightforward to see that $\Prob{A_k} = 1/k$. Let $\xi(i,j)$, $1 \le i,j \le n$, be the indicator that the key $i$, as a partitioning key, compares with the key $j$ at some point of time in the whole Quickselect process for the random permutation $\sigma$. Clearly, $C_n = \sum_{1 \le i,j \le n} \xi(i,j)$. Moreover (see \cite[Proof of Proposition~3.2]{Hwa}), for each $2 \le k \le n$,
	$$
	\sum_{1 \le j <k}\xi(k,j)(\sigma) = \left\{\begin{array}{ll}k-1, & \text { if } \sigma \in A_{k} \\ 0, & \text { if } \sigma \notin A_{k},\end{array}\right.
\quad \text{while} \quad 
	\sum_{1 \le j<k} \xi(j, k)(\sigma)=\left\{\begin{array}{ll}0, & \text { if } \sigma \in A_{k} \\ 1, & \text { if } \sigma \notin A_{k}.\end{array}\right.
	$$
	Thus, we obtain
	$$
	C_n= \sum_{k=2}^n \left[(k-1) \mathds{1}_{A_k} + 1 - \mathds{1}_{A_k}\right] = n-1 + \sum_{k=2}^n (k-2) \mathds{1}_{A_k}.
	$$
	Noting that the events $A_k$, $2 \le k \le n$, are independent, this implies that $n^{-1} C_n - 1$ has the same distribution as
	$$
	n^{-1} \sum_{k=1}^n (k-2) B_k=n^{-1} \sum_{k=1}^n k B_{k+2} - n^{-1}((n-1)B_{n+1} + n B_{n+2}) - n^{-1} =: W_n'- R_n - n^{-1},
	$$
	where $B_k \sim {\rm Ber}(1/k)$ are independent for $1\le k \le n+2$.
	Thus, the triangle inequality implies
	\begin{equation}\label{eq:dfirst}
	\begin{split}
		d_K\left(n^{-1} C_n - 1 , D\right) & = d_K(W_n'- R_n - n^{-1}, D) = d_K(W_n'- R_n, D + n^{-1}) \\
		& \le d_K\left(W_n', D\right) + d_K\left(W_n' - R_n, W_n'\right) + d_K(D,D+n^{-1}) 
	\end{split}
	\end{equation}
	and similarly
	\begin{equation}\label{eq:dsecond}
	d_K\left(n^{-1} C_n - 1 , D\right) \geq d_K\left(W_n', D\right) - d_K\left(W_n' - R_n, W_n'\right) - d_K(D,D+n^{-1}).
	\end{equation}
	From Theorem~\ref{thm.ber} with $\theta=1$, $\beta=2$ and $l=1$, we derive
	\begin{equation}\label{eqn:W_n'}
	\frac{\underline{C} \log n}{n} \leq d_K\left(W_n', D\right) \leq \frac{\overline{C} \log n}{n}
	\end{equation}
	for $n\geq 2$ with absolute constants $\overline{C},\underline{C} \in (0,\infty)$.
		Since the density of $D$ is bounded by $e^{-\gamma}$ (see \eqref{eq:pub}), we obtain
	$$
	d_K(D,D+n^{-1}) \leq \frac{e^{-\gamma}}{n},
	$$
	while
	$$
	d_K\left(W_n' - R_n, W_n'\right) \leq \mathbf{P}\left\{R_n \neq 0\right\} =1-\Prob{B_{n+1}=B_{n+2}=0} =1- \left(1-\frac{1}{n+1}\right)\left(1-\frac{1}{n+2}\right)  \le \frac{2}{n}.
	$$
	Plugging these estimates and \eqref{eqn:W_n'} into \eqref{eq:dfirst} and \eqref{eq:dsecond}, we obtain
	$$
	\frac{\underline{C} \log n}{n} - \frac{e^{-\gamma} +2}{n} \leq d_K\left(n^{-1} C_n - 1 , D\right) \leq \frac{\overline{C} \log n}{n} + \frac{e^{-\gamma} +2}{n}
	$$
	for $n\geq 2$, which are the desired lower and upper bounds.
\end{proof}

\begin{proof}[Proof of Corollary~\ref{cor:IT}]
	By \cite[Section~5.1, Proposition~1]{KP07}, the re-scaled weighted depth $\widetilde W_{n}:=(W_{n} - n)/n$ has the same distribution as
	$$
\frac{1}{n} \sum_{k=1}^{n-1} k B_k' = \frac{1}{n} \sum_{k=1}^{n} k B_k' - B_n'=: W_n' - B_n',
	$$
	where $B_k' \sim {\rm Ber} \left(\frac{1+c_2/c_1}{k+c_2/c_1}\right)$, $1 \le k \le n$, are independent, and $c_1\in\mathbb{R}\setminus\{0\}$, $c_2\in\mathbb{R}$ are the parameters of the underlying tree evolution process with $1+c_2/c_1 >0$. Recall that $\beta=c_2/c_1$. By the triangle inequality we have
	\begin{equation}\label{eq:Ttr}
		d_K(\widetilde W_{n}, D_{1+\beta}) \le d_K\left(W_n', D_{1+\beta}\right) + d_K\left(W_n', W_n' - B_n'\right)
	\end{equation}
and
\begin{equation}\label{eq:Ttr2}
		d_K(\widetilde W_{n}, D_{1+\beta}) \ge d_K\left(W_n', D_{1+\beta}\right) - d_K\left(W_n', W_n' - B_n'\right).
\end{equation}
Theorem~\ref{thm.ber} implies
	\begin{equation}\label{eq:Tdk}
	\underline{C} T_n	\le d_K\left(W_n', D_{1+\beta}\right) \le \overline{C} T_n \quad \text{with} \quad T_n:= \begin{dcases} \frac{1 + \beta \log n}{n}, & \quad \beta \geq 0,\\  \frac{1}{n^{1+\beta}}, & \quad \beta \in (-1,0), \end{dcases}
	\end{equation}
	for $n\in\mathbb{N}$ with constants $\underline{C},\overline{C} \in (0,\infty)$ depending on $\beta$. Finally, note that
	\begin{equation}\label{eq:dsplit1}
		d_K\left(W_n', W_n' - B_n'\right) \le \Prob{B_n' > 0} = \frac{1+\beta}{n+\beta}.
	\end{equation}
Plugging \eqref{eq:Tdk} and \eqref{eq:dsplit1} into \eqref{eq:Ttr} and \eqref{eq:Ttr2} yields the desired upper and lower bounds.
\end{proof}

\begin{proof}[Proof of Theorem~\ref{thm:imp}]
We argue as in the proof of \cite[Theorem~1.1]{BG17} for proving the assertion in Theorem~\ref{thm:imp}(a) when $Y_k \equiv k$ for $k \in \{1,\hdots, n\}$. For $\overline W_n :=n^{-1}\sum_{k=\lceil \theta \rceil}^n kB_k$ with independent $B_k \sim {\rm Ber}(\theta/k)$ for $\theta>0$, note that the solution $f$ corresponding to $h \in \mathcal{H}_{1,1}$ from \cite[Theorem~4.9 and the proof of Lemma~4.8]{BG17} is twice differentiable on $(0,\infty)$ and
	\begin{equation*}
	d_{1,1}(\overline W_n, D_\theta) \le \sup_{f \in \mathcal{H}_{\theta, \theta/2}} \left|\E \left[(\overline W_n/\theta) f'(\overline W_n)-f'(\overline W_n+U)\right]\right|.
	\end{equation*} 
	Notice,
	\begin{align*}
	&\E \left[(\overline W_n/\theta) f'(\overline W_n)-f'(\overline W_n+U)\right]=\frac{1}{n}\sum_{k=\lceil \theta \rceil}^n \E\left[\frac{kB_k}{\theta} f'\left(\overline W_n^{(k)}+\frac{k}{n}B_k\right)\right] - \E \left[\int_0^1 f'(\overline W_n+u)du\right] \\
	& \, =\frac{1}{n} \sum_{k=\lceil \theta \rceil}^n \E \left[f'\left(\overline W_n^{(k)}+\frac{k}{n}\right) -f'\left(\overline W_n+\frac{k}{n}\right) \right]
	+\E \left[\frac{1}{n}\sum_{k=\lceil \theta \rceil}^n f'\left(\overline W_n+\frac{k}{n}\right)- \int_0^1 f'(\overline W_n+u)du\right]
	\end{align*} 
with $\overline W_n^{(k)}=\overline W_n-(k/n) B_k$ for $k\in\{\lceil \theta\rceil,\hdots,n\}$. Hence, using that $f' \in {\rm Lip}_{\theta/2}$ with $\|f'\|_{\R_+} \le \theta$ yields
	\begin{align*}
	d_{1,1}(\overline W_n, D_\theta) &\le \frac{\theta}{2n}\sum_{k=\lceil \theta \rceil}^n  \E \left[|\overline W_n^{(k)}-\overline W_n|\right] \\
	&\qquad + \E \sum_{k=1}^n \int_{\frac{k-1}{n}}^{\frac{k}{n}}\left| f' \left(\overline W_n+\frac{k}{n}\right)- f'(\overline W_n+u)\right| du + \frac{1}{n} \E \sum_{k=1}^{\lceil \theta \rceil -1}  \left|f'\left(\overline W_n+\frac{k}{n}\right)\right|\\
	& \le \frac{\theta}{2n}\sum_{k=\lceil \theta \rceil}^n \E \left[\frac{k}{n}B_k\right] + \frac{\theta}{2}\sum_{k=1}^n \int_{\frac{k-1}{n}}^{\frac{k}{n}} \left(\frac{k}{n}-u\right)du + \frac{\theta^2}{n}\\
	&\le \frac{\theta}{2} \left( \frac{1}{n^2}\sum_{k=1}^n k - \int_0^1 u \, du \right) +\frac{3\theta^2}{2n} = \frac{\theta}{2} \left( \frac{n(n+1)}{2 n^2} - \frac{1}{2} \right) +\frac{3\theta^2}{2n} =  \frac{\theta(6\theta +1)}{4n}.
	\end{align*} 
	Thus, the triangle inequality implies
	\begin{equation}\label{tri.eq}
	d_{1,1}(W_n,D_\theta) \le \frac{\theta(6\theta +1)}{4n} + d_{1,1}(W_n,\overline W_n).
	\end{equation}
	Now couple $(W_n)_{n \in \N}$ and $(\overline{W}_n)_{n \in \N}$ by using the same sequence $(B_k)_{\lceil \theta \rceil \le k \in \N}$ for both. Then one has that for $h \in \mathcal{H}_{1,1}$ as defined in \eqref{eqn:TestFunctionsH11},
	\begin{equation*}
	h(W_n)=h(\overline W_n) + h'(\overline W_n)(W_n-\overline W_n) + R_n
	\end{equation*}
	with 
	$$
	R_n := (W_n - \overline W_n) \int_0^1 \left[h'(\overline W_n + t(W_n - \overline W_n)) - h'(\overline W_n)\right] dt
	$$
	so that by the Lipschitz property of $h'$, we have $|R_n| \le \frac{1}{2}(W_n-\overline W_n)^2$. Now conditioning on $(B_k)_{\lceil \theta \rceil \le k \le n}$, we obtain
	\begin{align*}
	\E \left[h(W_n)-h(\overline W_n) |(B_k)_{\lceil \theta \rceil \le  k \le n}\right]&=h'(\overline W_n) \E [W_n-\overline W_n | (B_k)_{\lceil \theta \rceil \le  k \le n}] + \E [R_n | (B_k)_{\lceil \theta \rceil \le k \le n}]\\
	&=\E [R_n | (B_k)_{\lceil \theta \rceil \le k \le n}].
	\end{align*}
	Using the independence of $Y_k$ and $B_k$ for each $k$ and that $\E\, [Y_k]=k$, we have
	$$
	|\E [h(W_n)]-\E [h(\overline W_n)]| \le \E[|R_n|] \le \frac{1}{2}\E [(W_n-\overline W_n)^2] = \frac{1}{2n^2} \sum_{k=\lceil \theta \rceil}^n \frac{\theta \sigma_k^2}{k}. 
	$$
	Hence, from \eqref{tri.eq}, we obtain
	\begin{equation*}
	d_{1,1}(W_n,D_\theta) \le 
	\frac{\theta(6\theta +1)}{4n} + \frac{\theta}{2n^2} \sum_{k=\lceil \theta \rceil}^n\frac{\sigma_k^2}{k}
	\end{equation*}
	yielding the first assertion. 
	
	When $W_n=n^{-1}\sum_{k=1}^n kP_k$ with $P_k \sim {\rm Poi}(\theta/k)$, by \cite[Theorem~1.2]{BG17}, we have
	\begin{equation*}
		d_{1,1}(\overline W_n, D_\theta) \le \frac{\theta}{4n},
	\end{equation*} 
with $\overline W_n$ defined similarly as in the case of Bernoulli summands. Arguing similarly as above, first using the triangle inequality, then expanding $h(W_n)$ around $\overline W_n$, followed by a use of the independence of $(Y_k)_{k \in \N}$ and $(P_k)_{k \in \N}$, one obtains
\begin{equation*}
	d_{1,1}(W_n,D_\theta) \le \frac{\theta}{4n} + \frac{\theta}{2n^2} \sum_{k=1}^n\frac{\sigma_k^2}{k}\left(1+\frac{\theta}{k}\right) \le \frac{\theta}{4n} + \frac{\theta(1+\theta)}{2n^2} \sum_{k=1}^n\frac{\sigma_k^2}{k},
\end{equation*}
which completes the proof.
\end{proof}

\end{document}